\newcommand{\pb}{{\mathrm{PB}}}
\newcommand{\mcs}{{\mathrm{MCS}}}
\newcommand{\mcsk}{{\mathrm{MCS\mhyphen}k}}
\newcommand{\loc}{{\mathrm{loc}}}
\DeclareMathOperator{\EI}{EI}
\DeclareMathOperator{\dBeta}{Beta}
\newrobustcmd{\st}{{\mathchoice
  {\mkern0.54mu\scalebox{0.8}{\ensuremath{\displaystyle\square}}}
  {\mkern0.54mu\scalebox{0.8}{\ensuremath{\textstyle\square}}}
  {\mkern0.54mu\scalebox{0.8}{\ensuremath{\scriptstyle\square}}}
  {\mkern0.54mu\scalebox{0.8}{\ensuremath{\scriptscriptstyle\square}}}%
}}
\newcommand{\goName}{\go}
\newrobustcmd{\go}{{\mathchoice
  {\raisebox{0.05em}{\scalebox{0.9}{\ensuremath{\displaystyle\vartriangleright}}}\mkern-1mu}
  {\raisebox{0.05em}{\scalebox{0.9}{\ensuremath{\textstyle\vartriangleright}}}\mkern-1mu}
  {\raisebox{0.05em}{\scalebox{0.9}{\ensuremath{\scriptstyle\vartriangleright}}}\mkern-1mu}
  {\raisebox{0.05em}{\scalebox{0.9}{\ensuremath{\scriptscriptstyle\vartriangleright}}}\mkern-1mu}%
}}
\newcommand{\tkName}{\mathrm{take}}
\newcommand{\tk}{\go_\tkName}
\newcommand{\opName}{\mathrm{open}}
\newcommand{\op}{\go_\opName}
\newcommand{\boxclosed}{{\boxtimes}}
\newcommand{\alt}{\alpha}
\newcommand{\dscnt}{\gamma}
\newcommand{\approxRatio}{\beta}
\newcommand{\surval}{\Gamma}
  \title[The Gittins Index]{The Gittins Index: A Design Principle for Decision-Making Under Uncertainty}
  \author[Ziv Scully and Alexander Terenin]{%
    Ziv Scully\\
    Cornell University
    \and
    Alexander Terenin\\
    Cornell University
  }
\begin{document}

\newcommand{\theabstract}{%
The Gittins index is a tool that optimally solves a variety of decision-making problems involving uncertainty, including multi-armed bandit problems, minimizing mean latency in queues, and search problems like the Pandora's box model.
However, despite the above examples and later extensions thereof, the space of problems that the Gittins index can solve perfectly optimally is limited, and its definition is rather subtle compared to those of other multi-armed bandit algorithms.
As a result, the Gittins index is often regarded as being primarily a concept of theoretical importance, rather than a practical tool for solving decision-making problems.

The aim of this tutorial is to demonstrate that the Gittins index can be fruitfully applied to practical problems.
We start by giving an example-driven introduction to the Gittins index, then walk through several examples of problems it solves---some optimally, some suboptimally but still with excellent performance.
Two practical highlights in the latter category are applying the Gittins index to Bayesian optimization, and applying the Gittins index to minimizing tail latency in queues.%
}

\ifinforms{
  \ABSTRACT{\theabstract}
  \maketitle
}{
  \maketitle
  \begin{abstract}\theabstract\end{abstract}
}

\tableofcontents

\section{Introduction}
\label{sec:intro}

Making effective decisions under uncertainty is a central theme in many areas of science, engineering, and technology.
Algorithms for making decisions are therefore central to many fields---from operations research to economics and artificial intelligence, to name a few.
In many---though certainly not all---such situations, it can be appropriate to model the decision-making problem stochastically, by formalizing it as a fully specified Markov decision~process.

An important subclass of such Markov decision processes represents, loosely speaking, \emph{choosing the best option among a set of alternatives} in the face of stochastic feedback about which option is best.
This includes the class of Bayesian multi-armed bandits, but also a number of other decision problems which at first glance might appear to be of a rather different character, such as consumer choice problems arising in economics, or certain optimal scheduling problems arising in queueing theory.

An underappreciated fact about such problems is that \emph{there is a general way to solve them exactly} using a class of techniques broadly known as \emph{Gittins index theory}.
These techniques have been rediscovered in various communities, often through the lens of a surprising solution to a specific decision problem.
As an example, let us quote a widely known exchange between two colleagues as recalled by Peter Whittle \cite{gittins_multi-armed_2011} and abridged by Richard Weber \cite{weber_lectures_2016}:
\begin{quote}
A colleague of high repute asked an equally well-known colleague:
\*[---,labelsep={2pt}] What would you say if you were told that the multi-armed
bandit problem had been solved?
\* Sir, the multi-armed bandit problem is not of such a
nature that it \emph{can} be solved.
\*/
\end{quote}
Reflecting this sentiment, our first goal in this tutorial will be to illustrate how such solutions work, focusing chiefly on the \emph{key definitions}.
In doing so, we show that Gittins index theory tells us, intuitively: to choose the best option among a set of alternatives under stochastic feedback, \emph{compare each stochastic option with an equivalent deterministic option}.

A second, even more underappreciated fact about the decision problems we study is that \emph{the definitions arising from exact solutions of simple problems can also yield strong solutions for more complex problems} where there is no hope of an exact solution.
In his course on information-directed sampling, Tor Lattimore described the setup covered by Gittins index theory as the \emph{"miraculous case"} \citep{lattimore_lectures_2021}---to contrast it with more general situations where optimal policies are intractable.
While this might tempt one to conclude that the Gittins index is not an appropriate technical tool for such settings, the basic idea of comparing stochastic options with equivalent deterministic options often continues to make intuitive sense, even though it is no longer optimal.

Against this background, our second goal is to illustrate how Gittins index theory can be used as a design principle for general decision problems.
For this, we showcase problems where policies based on the Gittins index, while not optimal, are known to perform strongly---either in a theoretical or an empirical sense, covering examples from queueing theory and Bayesian~optimization.

Our tutorial begins by covering what Gittins indices are and how they arise, intuitively.
This is done by way of analyzing Pandora's box, arguably the simplest concrete example, in \cref{sec:pandora}---and then placing it into a suitable abstract framework in \cref{sec:general}.
We then survey various extensions and advanced examples.
\Cref{sec:examples} covers settings where optimality holds, while \cref{sec:beyond} covers settings where optimality does not hold, but the Gittins index still either (a)~has excellent empirical performance, (b)~satisfies an approximate or asymptotic optimality guarantee, or (c)~both.
Two particularly practically relevant examples are Bayesian optimization (\cref{sec:beyond:bayesopt}) and scheduling to minimize tail latency (\cref{sec:beyond:tail}).
We give a more detailed outline of the themes covered by the examples in \cref{sec:pandora:limitations}.
We conclude our tutorial in \cref{sec:conclusion} with a discussion of open problems.

\section{An illustrative example: Pandora's box}
\label{sec:pandora}

The easiest way to understand the Gittins index is by way of example: for this, we present the Pandora's box problem from economics, originally due to \citet{weitzman_optimal_1979}.
In the Pandora's box problem, the decision-making agent is presented with a set of $n$ \emph{boxes}, labeled $i \in \{1, \dots, n\}$.
For a set $X$, let $\calP(X)$ be the space of probability measures over $X$.
Each box is associated with two quantities:
\*[1.] A \emph{cost to open} $c_i > 0$.
\* A \emph{reward distribution} $p_i \in \calP(\bbR)$, assumed to have finite mean.
\*/
At the starting time, each box is assumed to be \emph{closed}: we conceptually imagine it to contain a reward inside which is unknown to the agent and viewed as random.
Starting from the state where all boxes are closed, at each time point, the agent is allowed to either:
\*[1.] \emph{Open a closed box} $i$ of their choosing: the agent pays a cost of $c_i$, and learns the precise value of the reward $v_i\sim p_i$ which is contained inside the box.
\* \emph{Select an open box} $i$ from the set of open boxes: the agent's decision-making process ends, and they receive the reward $v_i$ which is inside the box.
\*/
Letting $T$ be the time when the agent selects a box, and $i_t$ be the box opened resp. selected at time $t$, the agent's aim is to maximize their expected total value, which is
\[
\label{eq:pandora_objective}
\E[\Bigg]{v_{i_T} - \sum_{t=1}^{T-1} c_{i_t}}
.
\]
A very important aspect of this formulation is that, even though the agent can open multiple boxes and must pay a cost \emph{for each box}, they ultimately receive \emph{only one reward}---namely, the value in the box they selected at the very end.
This results in an explore-exploit tradeoff: should the agent pay to learn about more possible rewards they might eventually select, or are the rewards revealed already good enough?

\subsection{Pandora's box as a Markov decision process}
\label{sec:pandora:mdp}

We can formulate the Pandora's box problem (denoted $\pb$ in subscripts) as a discrete-time Markov decision process (MDP) as follows.
Define the state space to be the set of tuples
\[
S_\pb = \{(s_1, \dots, s_n) : s_i \in \{\boxclosed\}\cup\bbR\cup\{\checkmark\}\}
\]
where $s_i = \boxclosed$ represents box~$i$ being closed, $s_i\in\bbR$ represents box~$i$ being open with reward $v_i = s_i$, and $s_i = \checkmark$ represents box~$i$ being selected.
This is a finite-horizon undiscounted MDP, whose initial state is $(\boxclosed, \dots, \boxclosed)$, and terminal states are tuples $(s_1, \dots, s_n)$ with $s_i = \checkmark$ for some~$i$.
The action space is $A_\pb = \{1, \dots, n\}$, which refers to indices of boxes.
Each action $a \in A_\pb$ corresponds to either opening the respective box, or selecting it: the transition kernel's action replaces the corresponding identifier in the tuple.
For example, for a three-closed-box state and $a=1$, this occurs by
\[
(\boxclosed,\boxclosed,\boxclosed) \overset{a=1}{\mapsto} (v_1,\boxclosed,\boxclosed)
\]
where $v_1\sim p_1$ is the revealed reward.
Throughout this section, we focus on the classical variant where all boxes' rewards are independent.
The MDP's reward function, defined for non-terminal states and all actions, is
\[
r_\pb(s,a) = \begin{cases}
-c_a & s_a = \boxclosed
\\
s_a & s_a \in \bbR
\end{cases}
\]
which returns either the negated costs of a closed box, or the previously revealed reward of an open box---here and throughout, we work with MDPs that allow negative rewards.
Our expected total value defined previously is therefore equal to the MDP's value function.
Continuing our three-box example, choosing $a=1$ a second time would transition
\[
(v_1,\boxclosed,\boxclosed) \overset{a=1}{\mapsto} (\checkmark,\boxclosed,\boxclosed)
\]
with a reward of $v_1$, for an overall value of $v_1 - c_1$.

By general MDP theory, there exists an optimal policy which maximizes the agent's expected value.
At first glance, however, it is unclear how much more one can expect to say about this policy.
On the one hand, the decision problem is, at the vaguest level, clean enough that one might hope for a surprisingly straightforward solution---as occurs, in, say, the secretary problem.
On the other hand, for general Markov decision processes, there is little hope of saying much about the optimal policy.
It will turn out that, using the Gittins index, one can obtain the optimal policy analytically---producing a solution that will turn out to be both straightforward and subtle at the same time.

Before proceeding, we conclude with a few comments.
First, note that the agent's policy can be adaptive: they can---and should---use values from opened boxes to decide what action to take next.
Second, note that decision problem is not an unknown-MDP statistical learning problem: all of MDP parameters, and in particular all reward distributions $p_i$ defining the transition kernel, are known to the agent.
In this problem, therefore, learning takes place in the sense of conditional probability: the agent learns the box's value by opening it.

\subsection{Why obvious greedy policies are suboptimal}
\label{sec:pandora:example}

\newcommand{\figPandoraExample}{%
  \begin{figure}[t]
    \begin{tikzpicture}
      \begin{scope}[scale=3]
        \node (A) at (-1.7, 0.55) {Box~1: closed};
        \drawBoxClosed{A |- 0, 0}{$\begin{gathered} c_1 = 1 \\ v_1 \sim p_1 = \begin{cases} 14 & \t{w.p.} \frac{1}{2} \\ 0 & \t{w.p.} \frac{1}{2} \end{cases} \end{gathered}$};
        \node (B) at (0, 0.55) {Box~2: closed};
        \drawBoxClosed{B |- 0, 0}{$\begin{gathered} c_2 = 1 \\ v_2 \sim p_2 = \begin{cases} 18 & \t{w.p.} \frac{1}{5} \\ 0 & \t{w.p.} \frac{4}{5} \end{cases} \end{gathered}$};
        \node (C) at (1.7, 0.55) {Box~3: open};
        \drawBoxOpen{C |- 0, 0}{$v_3 = 10$};
      \end{scope}
    \end{tikzpicture}
    \caption{
      An instance of the Pandora's box problem with two closed boxes and one opened box.
      Here, we know the realized value of the opened box~3, but only the reward distributions and not the realized values of the closed boxes~1 and~2.
    }
    \label{fig:pandora_example}
  \end{figure}
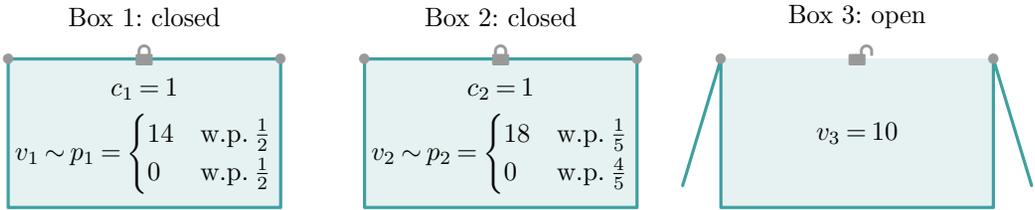%
}

Before explaining how to solve the Pandora's box problem, it is worth walking through an example that illustrates why the problem is non-trivial.
In particular, we will see that the most obvious greedy policy, consisting of choosing boxes according to the expected difference between rewards and costs, is suboptimal.
We will then see that the second-most obvious greedy policy, specifically one-step lookahead, is also suboptimal.
This might make one think greedy policies are not the right approach: remarkably, however, the optimal policy \emph{will} turn out to be a greedy policy---albeit with respect to a more subtle objective compared to the above two policies.

\figPandoraExample

Consider the three-box scenario shown in \cref{fig:pandora_example}.
In the situation represented by this state, we have already opened box~3, so we take its realized value $v_3 = 10$ to be fixed throughout this example, and ignore its cost because it has been paid already.
To decide what to do next, we have to answer two~questions:
\*[(a)] \label{item:pandora_example:should_stop}Is it worth opening another box, or should we stop now?
\* \label{item:pandora_example:which_box}If we open another box, should we open box~1 or box~2?
\*/

For \cref{item:pandora_example:should_stop}, a naïve idea is to think about how valuable each closed box~$i$ would be if it were the only box we had, which is simply $\E{v_i} -c_i$.
We compute
\[
  \E{v_1} - c_1 &= \tfrac{1}{2} \cdot 14 - 1 = 6
   \\
  \E{v_2} - c_2 &= \tfrac{1}{5} \cdot 18 - 1= 2.6
  .
\]
Both of these are much less than $v_3 = 10$, so it may seem like boxes~1 and~2 are less valuable than box~3, which might suggest we should stop now.

However, the above computation fails to account for a crucial fact: \emph{box~3's reward remains available even if we open another box}.
That is, if we were to open box~$i$, and then stop, we can receive a reward of either $v_i$ or $v_3$ when we stop, depending on our selection, and not simply~$v_i$.
We assume henceforth that we always select the best open box: this means our reward is $\max(v_i,v_3)$.\footnote{Recall again that we can ultimately only receive the reward from one box: this is why we receive the maximum of $v_i$ and $v_3$, and not their sum.}
Computing the expected value of opening each box and then stopping, we find
\[
  \label{eq:pandora_example_lookahead_1}
  \E{\max(v_1, v_3)} - c_1 &= \tfrac{1}{2} \cdot 14 + \tfrac{1}{2} \cdot 10 - 1 = 11
   \\
  \label{eq:pandora_example_lookahead_2}
\E{\max(v_2, v_3)} - c_2 &= \tfrac{1}{5} \cdot 18 + \tfrac{4}{5} \cdot 10 - 1 = 10.6
  .
\]
Both of these are greater than~$10$, so we conclude that opening either closed box and then stopping is better than stopping now---thus, we should definitely open at least one more~box.

Having decided to open a box, we move on to \cref{item:pandora_example:which_box}: which box should we open?
As a first step towards answering this, it will help to review \cref{eq:pandora_example_lookahead_1, eq:pandora_example_lookahead_2} from a different perspective.
Let the \emph{expected improvement} of box~$i$ over a baseline~$\alt$ be
\[
  \label{eq:pandora_ei}
  \EI_i(\alt) = \E{\max(v_i - \alt, 0)} - c_i
  .
\]
That is, if the current best value (among open boxes) is~$\alt$, then $\EI_i(\alt)$ is the expected amount we gain by opening box~$i$ instead of stopping now, accounting for both the cost of opening the box $-c_i$ and increasing the best value from $\alt$ to $\max(v_i, \alt)$.
The expected value of opening box~$i$ and then stopping is thus $\alt + \EI_i(\alt)$.
We can recognize \cref{eq:pandora_example_lookahead_1, eq:pandora_example_lookahead_2} as $\alt + \EI_i(\alt)$ values with baseline $\alt = v_3 = 10$:
\[
  \EI_1(v_3) &= \tfrac{1}{2} \cdot (14 - 10) - 1 = 1
\\
  \EI_2(v_3) &= \tfrac{1}{5} \cdot (18 - 10) - 1 = 0.6
  .
\]
So, box~1 has better expected improvement than box~2, which might suggest we should open box~1.
In fact, if we were only allowed to open one more box, then this computation shows that opening box~1 would be best.

Of course, we are not limited to opening just one more box.
In particular, if we open a box but find its realized value is low (say,~$0$), then $v_3 = 10$ would still be our best value.
In this case, the remaining closed box would still have positive expected improvement, so it would make sense to open it.
This reasoning suggests the following policy:
\* We first open one of the closed boxes.
\* If the realized value is high (say, $14$ or $18$), we stop.
\* If the realized value is low (say,~$0$), we open the other closed box.
\*/
We can carry this out starting by opening either box~1, which we call \emph{policy~P1}, or box~2, which we call \emph{policy P2}.
Our expected value under P1 is
\[
  \label{eq:pandora_example_brute_force_1}
  \E[\big]{\1(v_1 = 14) \cdot 14 + \1(v_1 = 0) \cdot (\max(v_2, v_3)-c_2)} - c_1
  = 11.3
\]
which is better than opening box~1 then stopping.
But our expected value under P2 is
\[
  \label{eq:pandora_example_brute_force_2}
  \E[\big]{\1(v_2 = 18) \cdot 18 + \1(v_2 = 0) \cdot (\max(v_1, v_3)-c_1)} - c_2
  = 11.4
\]
which is even better.
Thus, even though box~1 has higher expected improvement than box~2, policy~P2 outperforms~P1.
In fact, with a little bit more casework, one can show P2 is~optimal!

\subsubsection{Connecting expected improvement to one-step lookahead}

There is a connection between expected improvement and the one-step lookahead policy.
Specifically, the \emph{one-step lookahead} policy behaves as follows at every time step:
\* Let $v_{\max}$ be the maximum realized value among opened boxes.
\* If $\max_i \EI_i(v_{\max}) < 0$, namely if every box's expected improvement is negative, then stop.
\* Otherwise, open $\argmax_i \EI_i(v_{\max})$, namely the box of greatest expected improvement.
\*/
Policies like this are a standard approach used for constructing Bayesian optimization algorithms for black-box optimization, where they give rise to the \emph{expected improvement acquisition function}: we will return to this in \cref{sec:beyond}.

The example from \cref{fig:pandora_example} we have just worked through demonstrates that \emph{one-step lookahead is suboptimal}.
In particular, one can check that P1 is the one-step lookahead policy, and its expected value is $0.1$ less than that of~P2.
This might not seem so bad, but there are other Pandora's box instances where the performance gap between one-step lookahead and the optimal policy can be arbitrarily large---a performance counterexample is given by \citet[Appendix~A.1]{singla_price_2018}.
With this background, we proceed to study the structure of P2.

\subsection{Defining the Gittins index for Pandora's box}
\label{sec:pandora:gittins}

We have seen in \cref{sec:pandora:example} that one-step lookahead does not solve Pandora's box.
To find the optimal policy, we essentially had to resort to brute force in \cref{eq:pandora_example_brute_force_1, eq:pandora_example_brute_force_2}.
This will not work in larger instances with more boxes.
Can we still solve Pandora's box in such cases?

Remarkably---as first shown by \citet{weitzman_optimal_1979} for Pandora's box, and \citet{gittins_bandit_1979} in a general abstract setting---the optimal policy, which is called the \emph{Gittins index policy} or, more concisely, the \emph{Gittins policy}, is nearly as simple as one-step lookahead.
Specifically, both the Gittins policy and one-step lookahead are \emph{index policies}: they work by computing a numeric rating for each box, called the box's \emph{index}, then opening the box with the best index.
Such policies are also known in the Bayesian optimization literature as \emph{acquisition functions}: we return to this connection in \Cref{sec:beyond}.
We consider the following index policies:
\*[a.] Under one-step lookahead, a box's index is its expected improvement over the current best value.
\* Under the Gittins policy, a box's index is a quantity called, appropriately, its \emph{Gittins index}.
\*/
Below, we give a quick definition of the Gittins index of a (closed) box and discuss its relationship to expected improvement.
Later on, we discuss in more depth where this definition comes from, and \emph{why} it is natural.

We first briefly review one-step lookahead.
Suppose box~$i$ is closed, and suppose the current best value is~$v_{\max}$.
One-step lookahead sets box~$i$'s index to $\EI_i(v_{\max})$, defined in~\cref{eq:pandora_ei}.
Roughly speaking, this index answers the question: \emph{how valuable is it to open box~$i$?}

The Gittins index comes from a related but different question.
\emph{Ignoring the actual value of $v_{\max}$}, we ask: \emph{hypothetically, if we had $v_{\max} = \alt$, how large would $\alt$ need to be to rule out opening box~$i$?}
If box's $i$'s expected improvement over~$\alt$ were negative, say if $\EI_i(\alt) < 0$, we could rule out opening box~$i$: simply stopping with reward~$\alt$ would be a better action.
So, define the \emph{Gittins index of box~$i$}, denoted~$G_i$, to be the solution to the root-finding problem
\[
  \label{eq:pandora_gittins}
  \EI_i(G_i) = 0
\]
or, equivalently, $\E{\max(v_i - G_i, 0)} = c_i$.
To see that the Gittins index is well defined, meaning there is a unique solution $G_i$ in \cref{eq:pandora_gittins}, note that $\EI_i(\alt)$ is convex (and hence, since its domain is the real line, continuous), decreasing as a function of~$\alt$, and~satisfies
\[
\lim_{\alt \to \infty} \EI_i(\alt) = -c_i < 0 < \infty = \lim_{\alt \to -\infty} \EI_i(\alt)
.
\]
Note also that a higher Gittins index corresponds to a more desirable box.
For instance, increasing $c_i$ decreases~$G_i$.\footnote{Here and throughout, \emph{increasing} and \emph{decreasing} are meant in their non-strict forms by default.}

Having defined the Gittins index, we can define how the Gittins policy behaves:
\* Let $v_{\max}$ be the maximum realized value among opened boxes.
\* If $\max_i G_i < v_{\max}$, meaning if every box's Gittins index is worse (less) than $v_{\max}$, then stop and select the best open box.
\* Otherwise, open $\argmax_i G_i$, the box of best (greatest) Gittins index.
\*/
In fact, if we extend the Gittins index to also cover open boxes, by letting the Gittins index of an open box be its revealed reward value, then---up to a choice of how to resolve ties---the Gittins policy reduces to one rule:
\*[{(\ensuremath{\star})}] \emph{Always take the action of greatest Gittins index.}
\*/
Returning to the example boxes in \cref{fig:pandora_example}, their Gittins indices are
\[
  \EI_1(\alt) &= \tfrac{1}{2} \cdot \max(14 - \alt, 0) - 1
  \qquad\Rightarrow\qquad  g_1 = 12
  \\
  \EI_2(\alt) &= \tfrac{1}{5} \cdot \max(18 - \alt, 0) - 1
  \qquad\Rightarrow\qquad g_2 = 13
  .
\]
Box~$2$ thus has better Gittins index than either box~$1$ or the open box ($v_{\max} = V_3 = 10$), so the Gittins policy would open box~$2$---the optimal action we saw in \cref{sec:pandora:example}!

\subsection{Extensions and limitations: what else can the Gittins index do?}
\label{sec:pandora:limitations}

At a high level, the Pandora's box problem is a model of \emph{search with costly information acquisition}.
Our goal is to in find a good value, but without paying too much opening costs.
In general, search problems like this can, at first, appear to be completely different than Pandora's box---indeed, the seminal work of \citet{gittins_dynamic_1974} was motivated by discounted Bayesian multi-armed bandits (\cref{sec:examples:bayesian_bandits}).
Even direct generalizations can have many features that go beyond the classical Pandora's box:
\*[1.] \emph{Multiple stages of inspection}. In Pandora's box, the reward is revealed immediately. In general, it might be revealed gradually over time (\cref{sec:examples:two-stage_pandora}).
\* \emph{Searching for multiple values}. In Pandora's box, our reward comes from just one box. In other settings, it may depend on the state of multiple boxes (\cref{sec:examples:finish_multiple}).
\* \emph{Dynamic sets of options}. In Pandora's box, the set of boxes is fixed. In general, new actions might become possible over time, such as when scheduling a set of randomly arriving jobs in a queueing system (\cref{sec:examples:queue}), or more generally (\cref{sec:examples:branching}).
\* \emph{Correlations between values}. In Pandora's box, the rewards of different boxes are assumed independent. In general, they can be correlated. This is common in Bayesian optimization, where rewards are modeled using Gaussian processes (\cref{sec:beyond:bayesopt}).
\* \emph{Optional inspection}. In Pandora's box, a box must be opened before being selected. In general, it might be possible to select a box and receive its reward without first opening it (\cref{sec:beyond:optional_inspection}).
\* \emph{Metrics beyond expected value}. In Pandora's box, the objective is to maximize expected net reward. In general, one might hope to optimize metrics beyond expected value. For instance, when scheduling in a queueing system, it is often more important to prevent very long delays than to reduce the average delay (\cref{sec:beyond:tail}).
\*/
In which of these situations can the Gittins index be defined?
In what cases is the resulting Gittins policy still optimal?
What if we have several of these aspects simultaneously, such as multiple stages of inspection, with parts that can be skipped?

A principal aim of the rest of this tutorial is to shed light on these questions.
To do so, we first need to be able to precisely describe the potential features listed above.
To that end, over the next few sections, we present a unifying decision-making framework which includes Pandora's box---as well as well as related problems from seemingly different settings such as optimal queueing---into a common language.

At this level of generality, the Gittins index can be defined, and the resulting Gittins policy is optimal.
We then generalize our framework further to capture the more difficult of the advanced variants presented above: in most such situations, the Gittins index can still be defined, but can only be expected to yield a strong policy---in the spirit, of, say, upper confidence bounds or information-theoretic decision rules---rather than an outright optimal one.
We conclude the tutorial by surveying applications where the Gittins policy, despite being suboptimal, has excellent theoretical or empirical performance.

\section{General formulation of the Gittins index}
\label{sec:general}

\newcommand{\figPandoraMarkovChain}{%
  \begin{figure}[t]
    \begin{tikzpicture}
      \drawBoxClosed{-6.5, 0.5}{$\boxclosed$}

      \coordinate (A) at (-2.5, 0.5);
      \draw (-5.5, 0.5) -- (A)
        node[midway, above] {reward $-c$}
        node[midway, below] {next state $\sim p$}
        -- +(0.01, 0);

      \node at (-1.625, +3) {$\vdots$};
      \draw[<-] (-1.25, +2) -- +(-0.25, 0) to[out=180, in=0] (A);
      \draw[<-] (-1.25, +1) -- +(-0.25, 0) to[out=180, in=0] (A);
      \draw[<-] (-1.25, 0) -- +(-0.25, 0) to[out=180, in=0] (A);
      \draw[<-] (-1.25, -1) -- +(-0.25, 0) to[out=180, in=0] (A);
      \node at (-1.625, -1.75) {$\vdots$};

      \node at (0, +3) {$\vdots$};
      \drawBoxOpen{0, +2}{$3.2$}
      \drawBoxOpen{0, +1}{$3.1$}
      \drawBoxOpen{0, 0}{$3.0$}
      \drawBoxOpen{0, -1}{$2.9$}
      \node at (0, -1.75) {$\vdots$};

      \coordinate (C) at (5, 0.5);
      \draw[<-] (5.25, 0.5) -- (C) -- +(-0.01, 0);

      \node at (1.625, +3) {$\vdots$};
      \draw (1.25, +2) -- +(2.25, 0) node[midway, above] {reward $3.2$} to[out=0, in=180] (C);
      \draw (1.25, +1) -- +(2.25, 0) node[midway, above] {reward $3.1$} to[out=0, in=180] (C);
      \draw (1.25, 0) -- +(2.25, 0) node[midway, above] {reward $3.0$} to[out=0, in=180] (C);
      \draw (1.25, -1) -- +(2.25, 0) node[midway, above] {reward $2.9$} to[out=0, in=180] (C);
      \node at (1.625, -1.75) {$\vdots$};

      \drawBoxSelected{6.5, 0.5}{$\checkmark$}
    \end{tikzpicture}
    \caption{
      Illustration of the Markov chain for a box with opening cost~$c$ and reward distribution~$p$.
      The states are \emph{closed}, denoted~$\boxclosed$; \emph{opened with reward $v \in \bbR$}, denoted~$v$; and \emph{selected}, denoted~$\checkmark$.
    }
    \label{fig:pandora_markov_chain}
  \end{figure}
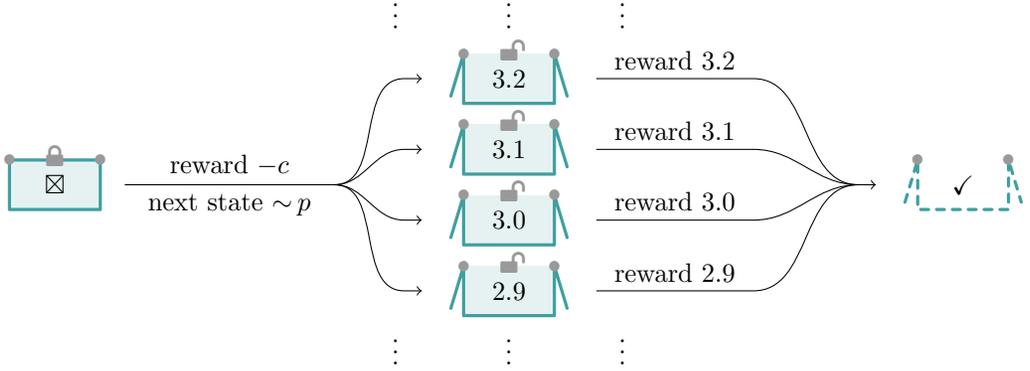%
}

Having seen the Pandora's box problem, its formulation as a Markov decision process, and its solution, one can ask: \emph{is there a general theory this solution is an example of?}
We now present such a theory, focusing on a formulation that generalizes well beyond Pandora's box.

The key idea is to conceptualize each individual Pandora's box as a \emph{transient Markov chain} with one absorbing state, with the collection of all boxes represented as tuples whose $i$th element is the Markov chain state corresponding to box $i$. \cref{fig:pandora_markov_chain} provides an illustration.
\figPandoraMarkovChain
An action $i$ then amounts to advancing the $i$th Markov chain forward by one step, and collecting whatever rewards arise as a result---where costs are represented as negative rewards.
The agent must select which Markov chain to advance at each time point.

This formulation enables one to handle many situations which, at first, appear to have little to do with Pandora's box.
This includes Bayesian variants of the multi-armed bandit problem \cite{gittins_dynamic_1974, gittins_bandit_1979} and mean delay minimization in single-server queues \cite{sevcik_use_1971, sevcik_scheduling_1974, olivier_kostenminimale_1972, klimov_time-sharing_1974, klimov_time-sharing_1978}, both of which predate the Pandora's box of \citet{weitzman_optimal_1979}, and for which Gittins indices were discovered independently.
We now study a framework that enables one to see how optimal policies arise in all of these setups.

\subsection{The Markov chain selection decision problem}
\label{sec:general:mcs}

We begin by formulating our general decision problem, which we call \emph{Markov chain selection (MCS)}, as a Markov decision process.

\begin{definition}[Markov chain]
  \label{def:markov_chain}
  A \emph{Markov chain\footnote{Note that our definition of a Markov chain (without rewards) is is equivalent to the usual random-variable-theoretic formulation from probability theory. Here, we opt to work with transition kernels, rather than collections of random variables, because this will make it notationally cleaner to track relationships between various different Markov decision processes that arise in our context.} with rewards} is a tuple $(S, \partial S, p, r)$ consisting of:
  \*[1.] The \emph{state space} $S$.
  \* A subset of \emph{terminal states} $\partial S \subseteq S$, which may be empty.
  \* A \emph{transition kernel} $p : S \to \calP(S)$.
  \* A \emph{reward function} $r : S \to \bbR$, which may take negative values.
  \*/
  We require that the terminal states be absorbing with zero reward, namely $p(s)$ deterministically maps $s \in \partial S$ to itself, and $r(s) = 0$ for all $s \in \partial S$.
\end{definition}

To ease terminology, we often omit \emph{with rewards}.
Note that when specifying a Markov chain, it suffices to define the part of the transition kernel which describes transitions out of non-terminal states, and similarly it suffices to define rewards for non-terminal states.

\begin{definition}[Markov chain selection]
\label{def:mcs}
Define the \emph{Markov chain selection (MCS)} problem with Markov chains $(S_i, \partial S_i, p_i, r_i)$, for $i \in \{1, \dots, n\}$, to be the MDP given as follows:
\*[1.] State space: let the state space be
\[
S_\mcs = \{(s_1, \dots, s_n) : s_i \in S_i \text{ for all } i\}
\]
together with an initial state whose components are all non-terminal.
\* Terminal states: let the terminal state set be
\[
  \partial S_\mcs = \{(s_1, \dots, s_n) \in S_\mcs : s_i \in \partial S_i \text{ for some } i\}
  .
\]
\* Action space: let $A_\mcs = \{1, \dots, n\}$.
\* Reward function: let
\[
r_\mcs(s,a) = r_a(s_a)
.
\]
\* Transition kernel: given a state $(s_1, \dots, s_n)$ and action $a$, we transition the MDP into a new state by replacing $s_a$ with $s_a' \sim p_a(s_a)$ according to that respective Markov chain's transition kernel, leaving other states unchanged.
\* Discount factor: let $\dscnt\in(0,1]$.
\*/
\end{definition}

To understand this definition, note that Pandora's box is a special case where each Markov chain, illustrated \cref{fig:pandora_markov_chain}, is as follows.
For box~$i$, the state space is $S_i = \{\boxclosed\}\cup\bbR\cup\{\checkmark\}$ and transition kernel given by $p_i(\boxclosed) = p_i$ and $p_i(v_i) = p_i(\checkmark) = \delta_\checkmark$, where $\delta_\checkmark$ is the Dirac measure at the symbol $\checkmark$.
In this Markov chain, $\checkmark$ is the unique absorbing state, which is also terminal, and all other states are transient.
Here, we take $\gamma=1$.

In general, choosing an action thus corresponds to choosing which Markov chain to transition---an abstract generalization of choosing which box to open.
Our formulation works with time-homogeneous Markov chains: this is without loss of generality, as non-time-homogeneous chains can be handled by adjoining time to their state space, transforming them into time-homogeneous ones.
To ensure well-definedness, we make the following assumption on each individual Markov chain.

\begin{assumption}
\label{asm:discount}
At least one of the following two conditions holds:
\*[subenv] From any initial state, the Markov chain with transition kernel $p$ reaches a terminal state in finite time with probability one, and the sum of absolute values of rewards of all transitions until termination has finite expectation.
\label{asm:discount:non_discounted}
\* We have $\dscnt<1$, and $r$ is uniformly bounded in absolute value.
\label{asm:discount:discounted}
\*/
\end{assumption}

Our arguments will proceed by showing that \cref{sub@asm:discount:discounted} reduces to \cref{sub@asm:discount:non_discounted}, then studying that case.
It is also possible to work in slightly greater generality: we adopt the formulation here to minimize technicalities while keeping results sufficiently general.

Compared to the situation of Pandora's box, it is, at first, even less obvious whether anything can be said about the defined MDP's optimal policy---particularly given that the MDP is much less concrete than before, and its abstract nature could potentially give rise to rather different looking examples.

The insight of \citet{gittins_bandit_1979}---which is particularly remarkable given it was first discovered in essentially the same generality we consider here---is that this class of MDPs can be solved in much the same manner as we presented Pandora's box in \cref{sec:pandora}.
Namely, the idea will be to look for a way to compare Markov chains with each other, just as before we looked for a way to compare Pandora's boxes with each other.
Specifically, we seek a way compare Markov chains, which are stochastic, with real numbers, which are not.

\subsubsection{A note on the name \emph{Markov chain selection}}
\label{sec:general:mcs:name}

We conclude with a note on terminology.
What we call MCS is, roughly speaking, usually called the \emph{Markovian multi-armed bandit problem} in the Gittins index literature.
We introduce the new name \emph{Markov chain selection} for two reasons.
First, the Markovian multi-armed bandit is typically defined slightly more restrictively, namely ruling out undiscounted settings and terminal states \citep{gittins_multi-armed_2011}, and we want to emphasize that we do not impose these restrictions.

Second, we wish to slightly \emph{de-emphasize} the link between Gittins indices and multi-armed bandits.
The broader multi-armed bandit literature is vast \citep{lattimore_bandit_2020, slivkins_introduction_2019}, and in the context of this vast literature, the Gittins index might only seem useful for solving one corner case, namely discounted Bayesian bandits (\cref{sec:examples:bayesian_bandits}).
On the other hand, in our opinion, \emph{many applications of the Gittins index do not superficially resemble bandit problems}.
The feature that unifies Gittins index applications---Pandora's box, bandits, queue scheduling, and more---is repeatedly choosing which of multiple independent Markov chains to advance.
Our hope is that the \emph{Markov chain selection} name directly evokes this unifying feature.

\subsection{Defining the Gittins index via the local MDP}

\newcommand{\figureLocalMdpValue}{%
  \begin{figure}[t]
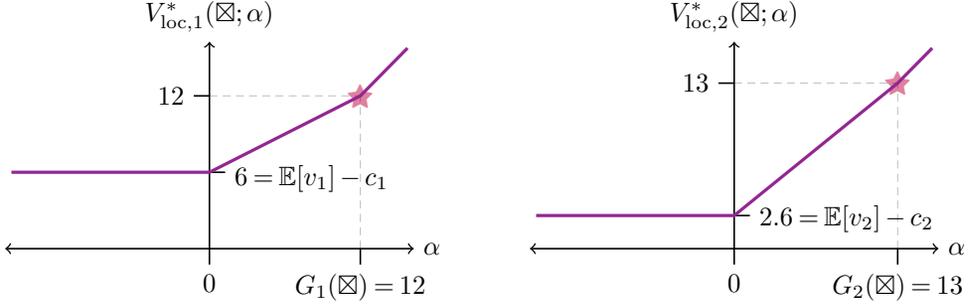

    \hfill%
    \begin{subfigure}{0.475\linewidth}
      \pictureLocalMdpValuePandora{6}{12}{1}
      \caption{Local MDP value for box~$1$ from \cref{fig:pandora_example}.}
    \end{subfigure}%
    \hfill%
    \begin{subfigure}{0.475\linewidth}
      \pictureLocalMdpValuePandora{2.6}{13}{2}
      \caption{Local MDP value for box~$2$ from \cref{fig:pandora_example}.}
    \end{subfigure}%
    \hfill%
    \caption{
      Optimal value of the $(\boxclosed, \alpha)$-local MDP for the two closed boxes in \cref{fig:pandora_example}.
      We include the box identifier in subscripts throughout for disambiguation.
      For sufficiently small~$\alpha$, namely $\alpha \leq 0$, the optimal policy opens the box and takes its reward, meaning it plays $\go$ twice, yielding value $\E{v_i} - c_i$.
      For intermediate values of~$\alpha$, the optimal policy opens the box but might then take the alternative, meaning it plays $\go$ at least once.
      The value's slope in this regime is the probability of eventually taking the alternative, namely (a)~$\frac{1}{2}$ or (b)~$\frac{4}{5}$.
      For sufficiently large~$\alpha$, the optimal policy takes the alternative without opening the box, meaning it plays $\st$.
      The value of~$\alpha$ at the boundary between the latter two regimes---that is. the unique value at which both $\go$ and $\st$ are optimal---is the Gittins index~$G_i(\boxclosed)$.
    }
    \label{fig:local_mdp_value}
  \end{figure}%
}

Mirroring the approach used in Pandora's box---namely, defining the Gittins index of a single closed box---we now define the Gittins index of a general Markov chain equipped with a general reward function.
We do so using the following notion.

\begin{definition}[Local MDP]
\label{def:local_mdp}
Let $(S,\partial S, p, r)$ be a Markov chain satisfying \cref{asm:discount}.
For every \emph{alternative option} $\alpha \in \bbR$ and \emph{initial state} $s\in S$, define a Markov decision process, called the \emph{$(s, \alpha)$-local MDP}, as follows:
\*[1.] State space: let $S_\loc = S \cup \{\checkmark\}$,\footnote{Throughout this work, we adopt the convention that unions with symbols such as $\checkmark$ are disjoint unions. That is, it is understood that the union $S \cup \{\checkmark\}$ uses a symbol $\checkmark\notin S$ which is not part of the original states.} with initial state~$s$.
\* Terminal state: $\partial S_\loc = \{\checkmark\}$.
\* Action space: let $A_\loc = \{\go,\st\}$, called \emph{go} and \emph{stop}, respectively.
\* Reward function: for $s \in S$, let $r_\loc(s,\go) = r(s)$, $r_\loc(s,\st) = \alpha$, and $r_\loc(\checkmark,\go) = r_\loc(\checkmark,\st) = 0$.
\* Transition kernel: if $s \in S$ and $a=\go$, then let $s'\sim p(s)$, otherwise if $s=\checkmark$ or $a=\st$ let $s' = \checkmark$.
\* Discount factor: let $\dscnt\in(0,1]$.
\*/
\end{definition}

The intuition behind the local MDP is that it simplifies a \emph{global} MCS instance down to the \emph{local} perspective of one Markov chain.
Specifically, it captures the tradeoff between advancing the Markov chain in state~$s$ vs. taking some other action.
While MCS has are many other actions to choose from, the local MDP simplifies the tradeoff by providing just one other action $\st$ with a very clear value~$\alpha$.
This generalizes the idea from \cref{sec:pandora:gittins} of comparing a closed Pandora's box with an open box---that is, comparing whether to transition the respective Markov chain one state forward, or simply take the value $\alpha$ from an alternative~open~box.

To make this precise, let $V_\loc^*(s; \alpha)$ be the value function of the local MDP, which is well defined by \cref{asm:discount}.
In the \emph{undiscounted} setting, we can write
\[
    \label{eq:v_loc_sup}
    V_\loc^*(s; \alpha) = \sup_{\pi : S_\loc \to A_\loc} (E_\pi(s) + \alpha P_\pi(s))
\]
where $E_\pi(s)$ is the expected total reward $\pi$ receives while playing $\go$ when starting from~$s$, and $P_\pi(s)$ is the probability $\pi$ eventually plays $\st$ when starting from~$s$.
In the \emph{discounted} setting, we can still write \cref{eq:v_loc_sup}, but $E_\pi(s)$ is discounted reward, and $P_\pi(s) = \E{\dscnt^{t_\pi}}$, where $t_\pi$ is either the time when $\pi$ plays~$\st$, or $\infty$ if $\st$ is never played.
\cref{fig:local_mdp_value} gives an illustration of the local MDP's value function for the closed boxes from \cref{fig:pandora_example}.
\figureLocalMdpValue

The general definition of the Gittins index of a state~$s$ similarly generalizes the Pandora's box Gittins index definition from \cref{sec:pandora:gittins}.
The definition follows from two key observations about the local MDP with initial state~$s$ and varying alternative:
\*[1.] If playing $\st$ is optimal when the alternative is~$\alpha$, then $\st$ is still optimal for any \emph{better} alternative $\alpha' > \alpha$.
\* If playing $\go$ is optimal when the alternative is~$\alpha$, then $\go$ is still optimal for any \emph{worse} alternative $\alpha' < \alpha$.
\*/
One can prove this using the fact that $\st$ is optimal if and only if $V_\loc^*(s; \alpha) = \alpha$ and the following properties of $V_\loc^*$.

\begin{lemma}
\label{lem:v_loc}
For any state $s$ of any Markov chain satisfying \cref{asm:discount}, the function $\alpha \mapsto V_\loc^*(s; \alpha)$ has the following properties:
\*[subenv] It is convex and non-decreasing.
\* It has left and right derivatives taking values in $[0, 1]$.
\* It is bounded below by $V_\loc^*(s; \alpha) \geq \alpha$.
\*/
\end{lemma}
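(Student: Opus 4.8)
The plan is to establish all three properties at once from the supremum representation in \cref{eq:v_loc_sup}, using the fact that each individual policy contributes an affine function of $\alpha$. For a fixed policy $\pi \colon S_\loc \to A_\loc$, write $f_\pi(\alpha) = E_\pi(s) + \alpha P_\pi(s)$ in the undiscounted case, and the analogous expression with $P_\pi(s) = \E{\dscnt^{t_\pi}}$ in the discounted case. In either case $f_\pi$ is an affine function of $\alpha$ whose slope $P_\pi(s)$ lies in $[0,1]$: it is a probability in the undiscounted setting, and an expectation of $\dscnt^{t_\pi} \in [0,1]$ in the discounted setting (with $\dscnt^\infty := 0$). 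Since $V_\loc^*(s; \cdot) = \sup_\pi f_\pi$ is a pointwise supremum of affine functions, it is convex; and since every $f_\pi$ is non-decreasing (nonnegative slope), the supremum is non-decreasing. This gives property~(a). The key preliminary point I would make carefully is that \cref{asm:discount} guarantees $E_\pi(s)$ and $P_\pi(s)$ are finite and that $V_\loc^*(s;\alpha)$ is finite for every $\alpha$, so the supremum is genuine and the convexity argument is not vacuous; under \cref{sub@asm:discount:non_discounted} this uses the finite-expected-absolute-reward hypothesis, and under \cref{sub@asm:discount:discounted} it uses boundedness of $r$ together with $\dscnt < 1$.

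For property~(b): a finite convex function on $\bbR$ automatically has left and right derivatives everywhere, so only the bound on their values needs argument. The slope of each affine piece $f_\pi$ lies in $[0,1]$, and for a supremum of affine functions the one-sided derivatives at any point are limits (or infima/suprema) of slopes of supporting affine functions, hence also lie in $[0,1]$. Concretely: for $h > 0$, convexity and monotonicity give $0 \le \frac{V_\loc^*(s;\alpha+h) - V_\loc^*(s;\alpha)}{h}$, and comparing against the policy achieving (near-)optimality at $\alpha + h$, whose slope is at most $1$, gives the upper bound $\le 1$ after letting $h \to 0$. A symmetric argument handles the left derivative. I would phrase this via the elementary estimate $V_\loc^*(s;\alpha') - V_\loc^*(s;\alpha) \le \alpha' - \alpha$ for $\alpha' \ge \alpha$, which follows because any policy good for $\alpha$ and the same policy run at $\alpha'$ differ in value by exactly $(\alpha' - \alpha)P_\pi(s) \le \alpha' - \alpha$, so $V_\loc^*(s;\alpha')$ cannot exceed $V_\loc^*(s;\alpha) + (\alpha' - \alpha)$. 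Combined with monotonicity, this pins the one-sided derivatives to $[0,1]$.

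For property~(c): take the policy $\pi_\st$ that plays $\st$ immediately from $s$. It collects no $\go$-rewards, so $E_{\pi_\st}(s) = 0$, and it plays $\st$ at time $0$, so $P_{\pi_\st}(s) = 1$ (undiscounted) or $\dscnt^0 = 1$ (discounted). Hence $f_{\pi_\st}(\alpha) = \alpha$, and $V_\loc^*(s;\alpha) = \sup_\pi f_\pi(\alpha) \ge f_{\pi_\st}(\alpha) = \alpha$.

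I do not expect a serious obstacle here; the statement is essentially a structural observation about suprema of affine functions. The one place requiring care — and the thing I would treat as the "main point" rather than the "main obstacle" — is the unified treatment of the discounted and undiscounted cases: making sure the representation \cref{eq:v_loc_sup} holds with the stated interpretation of $P_\pi$, that $P_\pi(s) \in [0,1]$ in both regimes, and that finiteness holds under either branch of \cref{asm:discount}. Once that bookkeeping is done, properties (a)–(c) follow from the affine-supremum argument above with no further difficulty.
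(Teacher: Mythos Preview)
Your proposal is correct and follows essentially the same approach as the paper: both derive all three properties from the representation \cref{eq:v_loc_sup} as a supremum of affine functions with slopes $P_\pi(s) \in [0,1]$. The only minor difference is in part~(b), where the paper invokes an envelope theorem \citep{milgrom_envelope_2002} while you give the elementary Lipschitz estimate $V_\loc^*(s;\alpha') - V_\loc^*(s;\alpha) \le \alpha' - \alpha$ directly; your version is more self-contained and arguably preferable here.
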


\begin{proof}
We argue as follows:
  \*[(a)] By \cref{eq:v_loc_sup}, $\alpha \mapsto V_\loc^*(s; \alpha)$ is a supremum of convex (namely, affine) and non-decreasing functions, so it is also convex non-decreasing.
  \* This follows from \cref{eq:v_loc_sup}, the convexity of $\alpha \mapsto V_\loc^*(s; \alpha)$, the fact that $P_\pi(s) \in [0, 1]$, and a standard envelope theorem \citep[Theorem~1]{milgrom_envelope_2002}.
  See \citet[Appendix~B.6]{xie_cost-aware_2024} for additional discussion about envelope theorems in this setting.
  \* Playing $\st$ immediately yields value~$\alpha$, and the optimal policy does at least as well.
  \*/
\end{proof}

Using these properties, one can show that there is a unique alternative~$\alpha$ such that \emph{both} $\st$ and $\go$ are optimal, namely the minimum value of~$\alpha$ such that $\st$ is optimal.
We define the Gittins index $G(s)$ of~$s$ to be this transition point.
\Cref{fig:local_mdp_value} illustrates an example.

\begin{definition}[Gittins index]
\label{def:gittins_general}
Let $(S, \partial S, p, r)$ be a Markov chain satisfying \cref{asm:discount}.
The \emph{Gittins index function} for the Markov chain, denoted $G : S \to \bbR\cup\{\infty\}$, maps each state $s\in S$ to either the unique number $g\in\bbR$ such that both $\go$ and $\st$ are optimal actions for the $(g, s)$-local MDP at its initial state, or to $\infty$ if no such number exists.
\footnote{
  Under \cref{asm:discount}, it is always possible to pass to the setting where $G(s)$ is necessarily finite: one can replace the given MCS instance with a modified MCS instance over a slightly smaller state space in a manner that preserves value functions and policies.
  See \cref{asm:simplify:no_free_states} and the following discussion for details.
}
\end{definition}

We call $G(s)$ the \emph{Gittins index of state~$s$}, and there are many equivalent ways to formulate it, for instance using stopping times.
In particular, one can define the Gittins index to be the supremum of alternative values for which $\go$ is strictly optimal, or the minimum value of~$g$ such that playing $\st$---which we note results in value~$g$---is optimal:
\[
  \label{eq:gittins_general}
  G(s)
  = \sup\curlgp{g \in \bbR : V_\loc^*(s; g) > g}
  = \inf\curlgp{g \in \bbR : V_\loc^*(s; g) = g}
\]
with the convention that infima and suprema of empty sets are $\infty$.
One can similarly view $G(s)$ as the solution to a root finding problem, as we did for Pandora's box in \cref{eq:pandora_gittins}:
letting $V_\loc^\goName(s; g)$ be the optimal value achievable when playing $\go$ at least once, we have
\[
  \label{eq:gittins_root}
  G(s) = V_\loc^\goName(s; G(s))
  .
\]
There are other ways to express $G(s)$, particularly in terms of optimization problems over stopping policies, or equivalently, stopping sets.
We refer the interested reader to prior expositions of the Gittins index for details \citep{gittins_multi-armed_2011, zhao_multi-armed_2020, balakrishnan_multi-armed_2014, glazebrook_stochastic_2014}.

When needed for disambiguation between multiple Markov chains $(S_i, \partial S_i, p_i, r_i)$, we add a subscript~$i$ as needed, for instance $G_i$ and $V_{\loc, i}^*$, as was used in \cref{fig:local_mdp_value}.

\subsection{Optimality of the Gittins policy}

We now state the key optimality theorem for \cref{def:gittins_general}, deferring its proof to \cref{sec:proof}.

\begin{theorem}
\label{thm:gittins_optimal}
Consider an instance of MCS (\cref{def:mcs}) with all Markov chains satisfying \cref{asm:discount}.
A policy for MCS is optimal if and only if it always selects an action of maximal Gittins index---meaning, if when in state $(s_1, \dots, s_n)$, it selects
\[
a \in \argmax_{i \in \{1, \dots, n\}} G_i(s_i)
.
\]
\end{theorem}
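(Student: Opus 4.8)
The plan is to establish optimality of the Gittins policy via an interchange/exchange argument combined with the local-MDP characterization of the Gittins index, following the classical Weber-style proof adapted to the undiscounted-with-terminal-states generality. The first reduction is to dispose of \cref{sub@asm:discount:discounted}: as flagged after \cref{asm:discount}, a discounted chain with bounded rewards is converted into an undiscounted chain by adjoining an absorbing terminal ``death'' state reached with probability $1 - \dscnt$ at every step, which preserves value functions, Gittins indices, and the correspondence between policies; so from here I assume every chain satisfies \cref{sub@asm:discount:non_discounted}, i.e.\ reaches a terminal state in finite expected time with absolutely-summable rewards.

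The core is a \emph{prevailing-charge} (fair-charge) argument. For each chain $i$ and each state $s_i$, define the prevailing Gittins index along a trajectory as the running minimum of $G_i$ over the states visited so far; the key monotonicity facts we need are exactly the two bullet observations preceding \cref{lem:v_loc}, which follow from \cref{lem:v_loc}. The steps: \textbf{(1)} Show that for a \emph{single} chain in isolation, playing $\go$ until the prevailing index first drops below $\alpha$ and then $\st$ is optimal for the $(s,\alpha)$-local MDP --- this is essentially \cref{eq:gittins_general} together with convexity, and gives the ``stay while index exceeds the alternative'' rule. \textbf{(2)} Prove a decomposition/Abel-summation identity expressing the total reward of the MCS under the Gittins policy as a discounted-by-index integral: intuitively, one processes chains in decreasing order of prevailing index, and the Gittins policy realizes, for every threshold level $x$, the optimal ``local'' return at alternative $x$. \textbf{(3)} An interchange argument: take any policy that at some point selects a non-maximal-index action; swap the next block of work on the higher-index chain ahead of the lower-index block. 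Using step (1) applied with $\alpha$ equal to the competing index, this swap does not decrease expected reward, and iterating (with a limiting argument since horizons may be unbounded) shows the Gittins policy is at least as good as any policy --- giving the ``only if'' direction one then gets from the fact that the inequality is strict unless ties are involved.

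\textbf{The main obstacle} I anticipate is making the interchange/limiting argument rigorous in the undiscounted setting with possibly unbounded (but finitely-absorbed, absolutely-summable) rewards: one must justify that the countable sequence of pairwise swaps converges to the Gittins policy and that expectations pass to the limit. The clean way around this is to avoid explicit swapping and instead prove the decomposition in step (2) directly --- writing the MCS value function as $\int$ over alternative-levels $x$ of a quantity that the Gittins policy maximizes pointwise in $x$ while \emph{every} policy is bounded above by the same integral --- so that optimality is immediate from optimality of the local MDPs. This is where \cref{lem:v_loc}'s convexity (hence representability of $V_\loc^*(s;\cdot)$ as an integral of its derivative, the ``probability of eventually stopping'') does the real work: the derivative identity lets one glue the per-chain local values into a global upper bound that the Gittins policy attains. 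Handling ties (the ``if'' direction allowing \emph{any} maximizer, and showing non-maximizing choices are strictly worse off a measure-zero set of alternative levels) is then a short additional argument using strict monotonicity of $V_\loc^*(s;\cdot) - \alpha$ away from the Gittins index.
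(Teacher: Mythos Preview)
Your proposal is essentially correct but takes a different route from the paper. The paper's proof (in \cref{sec:proof}) follows \citet{whittle_multi-armed_1980}'s \emph{dynamic programming} approach: it directly guesses the MCS value function to be $V_\mcs^*(s_1,\dots,s_n) = \E{\max_i \surval_i(s_i)}$, where $\surval_i(s_i)$ is exactly your running-minimum prevailing index (the paper calls it the \emph{surrogate value}), and then verifies this guess satisfies the Bellman equation \cref{eq:bellman_mcs} by rewriting it as $\E{V_{\loc,i}^*(s_i;\surval_{\neq i})}$ and invoking the local Bellman equation. Your proposal instead follows the \citet{weber_gittins_1992} \emph{economic/fair-charge} line (which the paper discusses as an alternative in \cref{sec:proof:discussion}): rather than verifying Bellman, you would show directly that every policy's value is bounded by an integral over index-levels and that the Gittins policy attains the bound pointwise. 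Both approaches share the same key ingredient---the surrogate value and the identity $V_\loc^*(s;\alpha) = \E{\max(\surval(s),\alpha)}$, which is the paper's \cref{lem:v_loc_surval} and is precisely the ``derivative identity'' you allude to. The paper's approach buys you an explicit closed-form value function and a clean extension to MDP-selection upper bounds (\cref{rmk:mdps_failure}), at the price of needing \cref{asm:bellman_unique_solution} to rule out spurious Bellman solutions; your approach would sidestep that assumption and the final ``value is actually achieved'' verification, but the upper-bound-for-every-policy step you sketch still needs the same care about absolute summability that you flag as the main obstacle.
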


In general, there may be several such policies, parameterized by an appropriately defined tie-breaking rule which chooses a maximizer in the event it is non-unique.
We will implicitly assume that a tie-breaking rule has been chosen, and will refer to any policy satisfying the condition in \cref{thm:gittins_optimal} as \emph{the Gittins policy}.
We can summarize \cref{thm:gittins_optimal} as follows:
\*[{(\ensuremath{\star})}]\emph{Always choose the Markov chain of greatest Gittins index.}
\*/
One can view this not just as a policy for MCS, but as a general design principle.
If we can compare an action to a deterministic alternative in the style of the local MDP, then we can usually define the action's Gittins index just as in \cref{def:gittins_general}.
For example, in \cref{sec:beyond:bayesopt}, we explain how this design principle applies to Bayesian optimization, a problem which may at first appear dissimilar to MCS.

In the situation formalized here---and a small set of generalizations, some of which we discuss in further detail in \cref{sec:examples}---this decision-making principle is outright optimal.
However, the specific statement in \cref{thm:gittins_optimal} is rather fragile.
It is not uncommon, in more general situations, for optimality to fail, and do so in a manner that suggests the proof technique breaks down completely.

In such situations, it be very tempting to conclude that this is because Gittins indices are not a good approach.
However, there are cases where the definition continues to make intuitive sense, and one can either (a)~show various notions of near-optimality, such as regret \citep{lattimore_regret_2016, farias_optimistic_2022} or approximation ratio \citep{scully_local_2024, chawla_combinatorial_2024, gergatsouli_weitzmans_2023} bounds, or (b)~show strong empirical performance \cite{xie_cost-aware_2024}.
In such cases, it can be more insightful to instead interpret the lack of an optimality theorem as a \emph{statement about the problem's richness}, rather than of Gittins-index-style stochastic-to-deterministic comparisons being the wrong decision-making approach.

Understanding where the Gittins policy is strong, even if not outright optimal, remains an active research area.
In \cref{sec:beyond}, we cover some examples where the Gittins policy is suboptimal but nevertheless has strong theoretical or empirical performance.

\subsection{Computation}
\label{sec:general:computation}

We now discuss computational properties of the Gittins index.
This problem is well studied for finite-state Markov chains: \citet{balakrishnan_multi-armed_2014} give a survey of the topic.
The current state-of-the-art algorithm is that of \citet{gast_testing_2023}, which runs in sub-cubic time.
We note also that even faster algorithms are possible under additional structure: see for instance \citet[Section~4.2 and Appendix~B]{scully_optimal_2018}.

For infinite-state Markov chains, computation remains a significant challenge \citep{kelly_multi-armed_1981, kim_robust_2016, farias_optimistic_2022, gittins_multi-armed_2011}.
Continuous state spaces have received comparatively little attention beyond Pandora's box, and we view computing the Gittins index of continuous-state Markov chains to be a significant open problem: we return to this in \cref{sec:conclusion:open_problems}.
We now discuss the key challenges for~doing~so.

As expressed by \cref{eq:gittins_root}, computing the Gittins index $G(s)$ of a given Markov chain state $s$ requires one to solve a root-finding problem defined in terms of the local MDP's value function $V_\loc^*$.
As such, one can expect computation of $G(s)$ in a given concrete setting to potentially involve elements of dynamic programming, together with root-finding algorithms.
This gives rise to two~challenges:
\*[1.] For continuous-state Markov chains, one must usually perform dynamic programming approximately \cite{bertsekas_dynamic_2012}.
\* The local MDP must be solved for enough different values of the parameter~$\alpha$ to determine the Gittins index.
\*/
In some classical problems---for instance, Gaussian Pandora's box---one can compute $V_\loc^*$ analytically.
In such cases, owing to monotonicity of $\alpha \mapsto V_\loc^*(s; \alpha)$, the Gittins index can be computed efficiently using bisection search.

Even when $V_\loc^*$ cannot be computed analytically, we suspect one can do better than simply apply off-the-shelf approximate dynamic programming algorithms.
This is because the local MDP possesses two properties which generic MDPs do not: its action space $\{\st,\go\}$ is very small, and the value of $\st$ is always $g$.
At present, methods that leverage these properties have largely yet to be developed.
The key challenge is in effectively handling the state space $S$ in situations where it is sufficiently high-dimensional to render discretization unviable.

Finally, there are extensions of MCS where there are infinitely many Markov chains, possibly uncountably many.
This introduces another obstacle: finding the chain of maximum Gittins index.
In general, this needs to be performed using gradient-based optimization.
We discuss how to do so in the context of Bayesian optimization in \cref{sec:beyond:bayesopt:computation}.

\section{Examples: optimal policies}
\label{sec:examples}

We now work through a number of examples.
We begin with a multi-stage Pandora's box, as a simple illustration of how the developed framework allows us to handle a slight generalization of our initial illustrative setup (\cref{sec:examples:two-stage_pandora}).
We then follow up with a discounted Bayesian bandit, illustrating how discounted chains without terminal states are handled (\cref{sec:examples:bayesian_bandits}).
Finally, we discuss three mild generalizations of the developed framework: (a) a variant which models Markovian search for multiple items, which therefore involves termination of more than one Markov chain (\cref{sec:examples:finish_multiple}); (b) a queueing example with a seemingly different objective, but which is handled similarly to the aforementioned Markovian search (\cref{sec:examples:queue}); and (c) a branching bandit, where the action space at each time point varies stochastically in a manner that can depend on the chosen actions (\cref{sec:examples:branching}).
In all cases, an optimality result in the spirit of \cref{thm:gittins_optimal} continues to hold.

\subsection{Two-stage Pandora's box}
\label{sec:examples:two-stage_pandora}

Consider the following variant of Pandora's box where there are two stages of inspection for a box.
\*[1.] The first stage reveals some partial information about the box's contents, which we call its \emph{label}.
This inspection incurs some cost.
\* The second stage opens the box and reveals its reward, just like in ordinary Pandora's box.
This incurs additional cost.
\* Finally, we can select a box and receive its reward once it is fully open.
\*/
To model a two-stage box as a Markov chain, we use much the same approach as for ordinary Pandora's box in \cref{fig:pandora_markov_chain}, but with additional states representing the label.
Specifically, letting $L$ be the set of labels, assumed disjoint from the remaining states, the state space is
\[
  S = \{\boxclosed\} \cup L \cup \bbR \cup \{\checkmark\}
  .
\]
The transition kernel $p : S \to \calP(S)$ described above then has the following form.
\*[1.] For the first stage, we always transition from $\boxclosed$ to some label $\ell \in L$. We thus write $p(\boxclosed) \in \calP(L)$ as the distribution over labels.
\* For the second stage, when in state $\ell \in L$, we transition to a state $v \in \bbR$ by sampling $v \sim p(\ell)$.
Here, $p(\ell) \in \calP(\bbR)$ is the reward distribution of a box given that its label is~$\ell$.
\* Finally, from any fully inspected state $v \in \bbR$, we always transition to the selected state~$\checkmark$, so $p(v) = \delta_\checkmark$.
\*/
This encodes how information is revealed as part of the inspection process.
Similarly, the reward function $r : S \to \bbR$ encodes the inspection costs.
\*[1.] If the cost of the first inspection stage is~$c(\boxclosed)$, then $r(\boxclosed) = -c(\boxclosed)$.
\* If, given a label $\ell$, the cost of the second stage is $c(\ell)$, then $r(\ell) = -c(\ell)$.
\* Finally, for any fully inspected state $v \in \bbR$, the reward is simply $r(v) = v$.
\*/

What does the Gittins index look like for this two-stage box?
Following \cref{thm:gittins_optimal}, to compute the Gittins index $G(s)$ of a state~$s$, we need to understand how the local MDP's value function $V_\loc^*(s; \alpha)$ depends on $\alpha$.
Specifically, the point where $\st$ and $\go$ are co-optimal is by definition also the smallest value of $\alpha$ for which $V_\loc^*(s; \alpha) = \alpha$.

To obtain the value function, we apply dynamic programming to the local MDP's, which is tractable because there are only two possible actions.
Working backwards from the terminal state $\checkmark$, we find:
\*[][3.] For fully inspected states $v \in \bbR$, we clearly have
\[
  V_\loc^*(v; \alpha) = \max(v, \alpha)
  .
\]
So, as in the one-stage Pandora's box, co-optimality implies $G(v) = v$.
\*[2.] For partially inspected states $\ell \in L$, we essentially have the same situation as for ordinary Pandora's box:
\[
  V_\loc^*(\ell; \alpha)
  &
  = \max\gp[\big]{\E_{v \sim p(\ell)}{V_\loc^*(v; \alpha)} - c(\ell), \alpha}
  \\ &
  = \max\gp[\big]{\E_{v \sim p(\ell)}{\max(v, \alpha)} - c(\ell), \alpha}
  .
\]
So computing $G(\ell)$ amounts to the same root-finding problem as in~\cref{eq:pandora_gittins}, namely
\[
  \E_{v \sim p(\ell)}{\max(v - G(\ell), 0)} - c(\ell) = 0
  .
\]
\*[1.] Finally, for the initial uninspected state~$\boxclosed$, we have
\[
V_\loc^*(\boxclosed; \alpha)
= \max\gp[\big]{\E_{\ell \sim p(\boxclosed)}{V_\loc^*(\ell; \alpha)}  - c(\boxclosed), \alpha}
\]
which yields a second kind of root-finding problem for $G(\boxclosed)$, namely
\[
\E_{\ell \sim p(\boxclosed)}{V_\loc^*(\ell; \alpha) - \alpha} - c(\boxclosed) = 0
.
\]
\*/

The Gittins index for this problem can therefore be expressed as a solution to a sequence of root-finding problems.
One can generalize this to more than two stages while maintaining a similar structure.
In general, following \cref{sec:general:computation}, one can expect that computing $G$ will require numerical methods---albeit ones which involve each local MDP individually, rather than the full Markov chain selection MDP.

\subsection{Bayesian Bernoulli bandits with discounting}
\label{sec:examples:bayesian_bandits}

In the \emph{Bernoulli multi-armed bandit problem}, an agent is presented with $n$ coins, where each coin~$i$ has unknown heads probability~$x_i$.
Our goal, roughly, is to repeatedly flip coins in a way that maximizes the expected (discounted) number of heads.
In the \emph{Bayesian} version of the problem, the agent has a prior distribution on~$x_i$.
This problem fits into the framework of \cref{def:mcs} with discount parameter $\dscnt < 1$: each Markov chain corresponds to one coin, advancing the Markov chain corresponds to flipping the coin: the Markov chain's state represents the current posterior distribution the agent has on~$x_i$.

We focus on the traditional beta-distributed prior $x_i \sim \dBeta(a_i, b_i)$ for some $a_i, b_i > 0$.
In this case, by standard properties of the beta distribution, we can express each coin~$i$ as a Markov chain as follows:
\*[1.] The state space is $S = (0, \infty)^2$, with an initial state is $(a_i, b_i)$.
\* The transition kernel is
\[
  p(a, b)
  = \begin{cases}
    (a + 1, b) & \t{w.p.} \frac{a}{a + b} \\
    (a, b + 1) & \t{w.p.} \frac{b}{a + b}
                 .
  \end{cases}
\]
\* The reward function is $r(a, b) = \frac{a}{a + b}$.
\*/

\Cref{def:mcs}, with Markov chains of the above, is therefore our Bayesian Bernoulli bandit of interest.
To compute the Gittins index, we need to solve the respective local MDP of \cref{def:local_mdp} for the above Markov chain.
However, because the Markov chain has a countably infinite state space and no terminal states, computing the Gittins index is nontrivial, though several practical approaches for approximating or bounding it are known: see \citet{kelly_multi-armed_1981, kim_robust_2016, farias_optimistic_2022, gittins_multi-armed_2011} for examples.

\subsection{Selecting multiple boxes, or finishing multiple Markov chains}
\label{sec:examples:finish_multiple}

One can view Pandora's box, and more generally MCS with terminating Markov chains, as a model of searching for a single item.
What if one is instead searching for multiple items?
It turns out that in the undiscounted setting---that is, with $\dscnt = 1$---the Gittins policy optimally solves this version of the problem, too.
Specifically, for any~$k$, the Gittins policy maximizes the total expected reward received up until the point the first $k$ Markov chains reach terminal states.
We now state this more formally.

\begin{definition}
  \label{def:mcs-k}
  The \emph{$k$-finish Markov chain selection (MCS-$k$)} problem for $k \leq n$ Markov chains is an MDP defined in the same way as \cref{def:mcs}, except the process does not end until $k$ Markov chains reach terminal states, meaning the terminal states are
  \[
    \partial S_\mcsk
    = \{(s_1, \dots, s_n) \in S_\mcs : s_i \in \partial S_i \text{ for } k \text{ distinct } i\}
    .
  \]
  Additionally, action~$i$ is disallowed if Markov chain~$i$ is in a terminal state, meaning this MDP has a state-dependent action space, which is defined to be
  \[
  A_\mcsk(s_1,\dots,s_n) = \{i : s_i \notin \partial S_i\}
  .
  \]
\end{definition}

The case $k=1$ reduces to \cref{def:mcs}.
The following result shows \cref{thm:gittins_optimal} generalizes to $k\geq 2$.
Moreover, the proof is very similar: see \cref{sec:proof:mcs-k}.

\begin{theorem}
\label{thm:gittins_optimal_mcs-k}
The Gittins policy is optimal for undiscounted MCS-$k$.
\end{theorem}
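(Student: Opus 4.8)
The plan is to reduce MCS-$k$ to ordinary MCS (the $k=1$ case, \cref{thm:gittins_optimal}) by an interchange-of-adjacent-actions argument at the level of sample paths, exploiting that the only change from \cref{def:mcs} is \emph{when} the process halts, not how individual chains evolve or how rewards accrue. The key structural observation is that in the undiscounted setting a policy's total reward is simply the sum, over all chains $i$ and all times, of the rewards collected on chain $i$ before it terminates, truncated at the global stopping time $T_k$ = the step at which the $k$th chain reaches $\partial S_i$. Since each chain's reward-versus-time trajectory is a fixed stochastic process independent of the others, the decision problem is to allocate ``effort'' (steps) across chains so as to maximize expected reward collected before $T_k$.

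First I would set up the standard reduction showing \cref{asm:discount:discounted} is vacuous here (MCS-$k$ is undiscounted by hypothesis) and recall from \cref{def:gittins_general} and \cref{eq:gittins_general} that $G_i(s_i)$ depends only on chain $i$'s own local MDP. Then I would prove optimality by a prioritization/exchange argument: suppose a policy $\pi$, at some reachable state $(s_1,\dots,s_n)$, plays action $i$ when there is an available action $j$ with $G_j(s_j) > G_i(s_i)$. I would construct a modified policy that swaps the roles of $i$ and $j$ over an appropriate block of time and show the expected reward does not decrease. The cleanest way to organize this is the ``fair charge'' or prevailing-charge argument classically used for Gittins: define for each chain $i$ the running sequence of ``prevailing charges'' $G_i(s_i) \geq \min(\cdot) \geq \cdots$, show that under any policy the total discounted-by-indicator reward collected on chain $i$ up to any stopping rule is bounded by an integral of these charges, and that the Gittins policy achieves the bound with equality because it always works on the chain with the currently highest charge. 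The MCS-$k$ twist is only that the ``budget'' of total steps is random—namely $T_k$—but since $T_k$ is determined by which chains have been advanced to termination and the Gittins policy's prioritization is pathwise greedy on the charges, the same domination holds: any reward the Gittins policy forgoes by stopping at $T_k$ is reward on a chain whose prevailing charge is $\leq$ that of the chains it did finish, so no alternative allocation of the same step-budget does better.

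Concretely the steps, in order, are: (1) invoke \cref{asm:discount:non_discounted} and finiteness so that all relevant expectations are well-defined and the value function exists; (2) for each chain, define the sequence of prevailing charges and prove the single-chain domination lemma $\E[\sum_{t} r_i(\text{state at }t)] \leq \E[\int \text{charge}\,]$ for every stopping rule, with equality for rules that stop only when the charge would drop—this is essentially \cref{eq:gittins_root} and \cref{lem:v_loc} repackaged; (3) sum over chains and observe that for \emph{any} MCS-$k$ policy the realized reward equals a sum of such single-chain quantities under the (policy-dependent, but pathwise-defined) stopping rules induced by $T_k$; (4) bound this sum by the integral of the pooled, sorted charges over the realized number of steps, and argue the Gittins policy both realizes the sorted-charge integral exactly and stops at the point where continuing would only add lower charges than already consumed; (5) conclude equality of values, hence optimality, and conversely that any optimal policy must pick a maximal-index action whenever the argmax comparison is strict (the ``only if'' direction) by noting a strict violation produces a strictly suboptimal step in the domination chain.

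The main obstacle I anticipate is step (4): carefully handling the \emph{random} horizon $T_k$ in the exchange argument. In the $k=1$ case the process halts the instant \emph{any} chain terminates, so the interchange of two adjacent actions is clean; with $k\geq 2$, finishing a chain early can \emph{shorten} the remaining horizon in a way that interacts with the other $n-1$ chains, so a naive adjacent-transposition argument risks comparing policies that run for different lengths of time. The fix—and the delicate part to get right—is to phrase everything in terms of the pathwise pooled-charge integral truncated at the realized step count, and to verify that the Gittins policy's greedy choice is optimal for the induced ``knapsack with a data-dependent capacity'' rather than just a fixed-capacity problem; the monotonicity in \cref{lem:v_loc}(b) (charges lie in effect with slopes in $[0,1]$, reflecting termination probabilities) is exactly what makes the truncation behave. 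I expect the paper's proof in \cref{sec:proof:mcs-k} to handle this by piggybacking on the \cref{sec:proof} machinery for \cref{thm:gittins_optimal}, changing only the definition of the terminal event, which is why the authors remark the proof is ``very similar.''
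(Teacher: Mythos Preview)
Your approach is genuinely different from the paper's. The paper does \emph{not} use an interchange or prevailing-charge argument; it extends the dynamic-programming verification of \cref{thm:v_mcs}. Concretely, it guesses the MCS-$k$ value function as
\[
V_\mcsk^*(s_1,\dots,s_n)=\E\Bigl[\max_{|I|=k}\sum_{i\in I}\surval_i(s_i)\Bigr],
\]
rewrites it from chain~$i$'s perspective as $\E[V_{\loc,i}^*(s_i;\surval_{\text{without\,}i}-\surval_{\text{with\,}i})]+\E[\surval_{\text{with\,}i}]$ (where $\surval_{\text{with\,}i}$ and $\surval_{\text{without\,}i}$ are the top-$(k{-}1)$ and top-$k$ sums among the other chains' surrogate values), and then applies $\calB_i$ and \cref{lem:bellman_loc} exactly as in the $k=1$ case. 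The random-horizon issue you worry about never appears, because the terminal condition is baked into the value-function guess and one only checks a pointwise Bellman inequality. What your approach buys, when it works, is a pathwise statement and easier extension to combinatorial constraints (as in the Singla and Gupta--Jiang--Scully--Singla references the paper cites); what the paper's approach buys is that the hard step is guessing \cref{eq:v_mcs-k}, after which the verification is mechanical.

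Your proposal has a real gap at step~(4), which you yourself flag. Saying ``phrase everything as a pooled-charge integral truncated at the realized step count'' and ``the monotonicity in \cref{lem:v_loc}(b) is what makes the truncation behave'' is not an argument. The difficulty is that an arbitrary policy~$\pi$ and the Gittins policy run for \emph{different} random numbers of steps $T_k^\pi$ and $T_k^{\mathrm{Gittins}}$, and you need to compare sums of charges over these two different (random, policy-dependent) index sets. The standard way to close this in the prevailing-charge framework is to argue that the multiset of charges any policy can collect before its own $T_k$ is, in an appropriate sense, dominated by the multiset Gittins collects---because Gittins consumes charges in decreasing order and each termination event under Gittins occurs no later (in the charge ordering) than under~$\pi$. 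That coupling requires a careful induction or exchange that you have not supplied, and your ``knapsack with data-dependent capacity'' framing does not substitute for it. The argument can be completed (see the economic-argument references in \cref{sec:proof:discussion}), but as written your step~(4) is a promissory note rather than a proof.
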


In fact, one can use the Gittins index to solve an even more general problem than MCS-$k$ involving \emph{combinatorial constraints}.
For example, consider the following problem introduced by \citet{singla_price_2018}, which we call \emph{spanning-tree Pandora's box}.
We are given a graph, where each edge of the graph is associated with a box with some opening cost and reward distribution.
The problem proceeds much like the $k$-finish variant of Pandora's box, except the set of boxes we take must contain no cycles.
One can view this as a max-weight spanning tree variant where we replace deterministic edge weights with Pandora's boxes.

The classical max-weight spanning tree problem is famously optimally solvable by greedy algorithms.
The simplest of these, arguably, is Kruskal's algorithm, which accumulates a spanning tree by repeatedly adding the edge of greatest weight that would not form a cycle.
\Citet{singla_price_2018} shows that essentially the same algorithm solves spanning-tree Pandora's box if one uses \emph{Gittins indices in place of deterministic edge weights}.
That is, at every time step, we take the action corresponding to the box of greatest Gittins index, provided that box's edge would not form a cycle with selected edges.

Moreover, there is nothing special about Pandora's box here, and indeed, \citet{gupta_markovian_2019} show that essentially the same algorithm solves the version of the problem where each edge has an arbitrary (terminating) Markov chain.
For example, one could use the two-stage Pandora's box variant of \cref{sec:examples:two-stage_pandora} to model max-weight spanning tree problems where each edge requires two rounds of inspection to reveal its value---perhaps an inexpensively acquired initial estimate followed by an expensively acquired precise valuation.
The resulting strategy of taking a classic combinatorial algorithm and plugging in Gittins indices in place of deterministic weights is potentially very general.

In what situations does this strategy work?
The answer, roughly speaking, is for \emph{greedy algorithms} of a certain form identified by \citet{singla_price_2018}.
For instance, one can optimally solve what we might call \emph{matroid-finish MCS}, because matroid packing problems---of which max-weight spanning tree is a special case---are solved by greedy algorithms.
Moreover, for situations where greedy algorithms only give approximately optimal solutions, \citet{singla_price_2018} (for Pandora's box) and \citet{gupta_markovian_2019} (for general Markov chains) show that the Gittins index version of the greedy algorithm achieves the same approximation ratio as the algorithm would achieve in the classical deterministic-weight setting.
Thus, one can view Gittins indices as providing a good definition for what \emph{greedy} should mean in stochastic contexts.

\subsection{Scheduling in queues}
\label{sec:examples:queue}

\newcommand{\figJobMarkovChainKnown}{%
  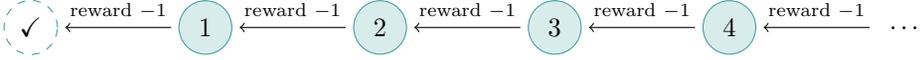
\begin{figure}[t]
    \begin{tikzpicture}
      \newcommand{\xscale}{2.5}
      \node[state selected] (A0) at (0 * \xscale, 0) {$\checkmark$};
      \node[state] (A1) at (1 * \xscale, 0) {$1$};
      \node[state] (A2) at (2 * \xscale, 0) {$2$};
      \node[state] (A3) at (3 * \xscale, 0) {$3$};
      \node[state] (A4) at (4 * \xscale, 0) {$4$};
      \node[state, draw=none, fill=none] (A5) at (5 * \xscale, 0) {$\dots$};
      \draw[<-] (A0) to node[midway, above] {\scriptsize reward $-1$} (A1);
      \draw[<-] (A1) to node[midway, above] {\scriptsize reward $-1$} (A2);
      \draw[<-] (A2) to node[midway, above] {\scriptsize reward $-1$} (A3);
      \draw[<-] (A3) to node[midway, above] {\scriptsize reward $-1$} (A4);
      \draw[<-] (A4) to node[midway, above] {\scriptsize reward $-1$} (A5);
    \end{tikzpicture}
    \caption{
      Markov chain of a job with \emph{known} service time.
      The job's state is its \emph{remaining service}, namely how many more time units of service until it completes---meaning, transitions to the finished state~$\checkmark$.
      Every transition yields reward $-1$, representing one unit of time passing.
      The transitions are all deterministic, and one can confirm that $\checkmark$ is reached.
    }
    \label{fig:job_markov_chain_known}
  \end{figure}%
}

{\catcode`\$=3%
\gdef\drawTransitionsUnknown#1#2#3#4{%
  \begin{scope}[overlay]
    \draw[->] (#10) to node[midway, above] {\scriptsize reward $-1$} (#11);
    \draw[->] (#11) to node[midway, above] {\scriptsize reward $-1$} (#12);
    \draw[->] (#12) to node[midway, above] {\scriptsize reward $-1$} (#13);
    \draw[->] (#13) to node[midway, above] {\scriptsize reward $-1$} (#14);
    \draw[->] (#14) to node[midway, above] {\scriptsize reward $-1$} (#15);
    \draw[->] ($(#10)!0.3!(#11)$) to[out=0, in=70] +(0.2 * \xscale, -0.7) +(0.15 * \xscale, -0.7) node[left] {\scriptsize w.p. $#4{#3_1}{#3_{\geq 1}}$} +(0.2 * \xscale, -0.7) to[out=250, in=5*15+15] (#20);
    \draw[->] ($(#11)!0.3!(#12)$) to[out=0, in=70] +(0.2 * \xscale, -0.7) +(0.15 * \xscale, -0.7) node[left] {\scriptsize w.p. $#4{#3_2}{#3_{\geq 2}}$} +(0.2 * \xscale, -0.7) to[out=250, in=4*15+15] (#20);
    \draw[->] ($(#12)!0.3!(#13)$) to[out=0, in=70] +(0.2 * \xscale, -0.7) +(0.15 * \xscale, -0.7) node[left] {\scriptsize w.p. $#4{#3_3}{#3_{\geq 3}}$} +(0.2 * \xscale, -0.7) to[out=250, in=3*15+15] (#20);
    \draw[->] ($(#13)!0.3!(#14)$) to[out=0, in=70] +(0.2 * \xscale, -0.7) +(0.15 * \xscale, -0.7) node[left] {\scriptsize w.p. $#4{#3_4}{#3_{\geq 4}}$} +(0.2 * \xscale, -0.7) to[out=250, in=2*15+15] (#20);
    \draw[->] ($(#14)!0.3!(#15)$) to[out=0, in=70] +(0.2 * \xscale, -0.7) +(0.15 * \xscale, -0.7) node[left] {\scriptsize w.p. $#4{#3_5}{#3_{\geq 5}}$} +(0.2 * \xscale, -0.7) to[out=250, in=1*15+15] (#20);
  \end{scope}
}

\gdef\figJobMarkovChainUnknown{%
  \begin{figure}[t]
    \begin{tikzpicture}
      \newcommand{\xscale}{2.5}
      \node[state] (A0) at (0 * \xscale, 0) {$0$};
      \node[state] (A1) at (1 * \xscale, 0) {$1$};
      \node[state] (A2) at (2 * \xscale, 0) {$2$};
      \node[state] (A3) at (3 * \xscale, 0) {$3$};
      \node[state] (A4) at (4 * \xscale, 0) {$4$};
      \node[state, draw=none, fill=none] (A5) at (5 * \xscale, 0) {$\dots$};
      \node[state selected] (B0) at (0 * \xscale, -3) {$\checkmark$};
      \drawTransitionsUnknown{A}{B}{m}{\dfrac}
    \end{tikzpicture}
    \caption{
      Markov chain of a job with \emph{unknown} service time sampled from distribution~$m$---that is, the job's service time is $t$ with probability~$m_t$, and it is at least~$t$ with probability $m_{\geq t} = \sum_{u = t}^\infty m_u$.
      The job's state is its \emph{attained service}, namely how many time units of service it has already received.
      Every transition yields reward $-1$, representing one unit of time passing.
      The transition probabilities come from the fact that if the job is in state~$s$, then because the job has not yet completed, its service time must be at least~$s + 1$.
      Given this, a job in state~$s$ completes within its next unit of service with probability $m_{s + 1} / m_{\geq s + 1}$.
    }
    \label{fig:job_markov_chain_unknown}
  \end{figure}
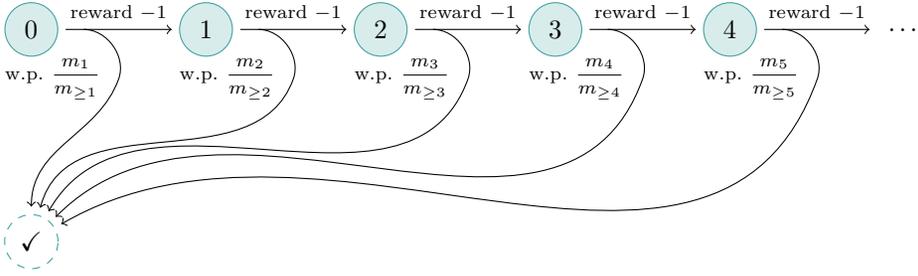%
}%

\gdef\figJobMarkovChainStages{%
  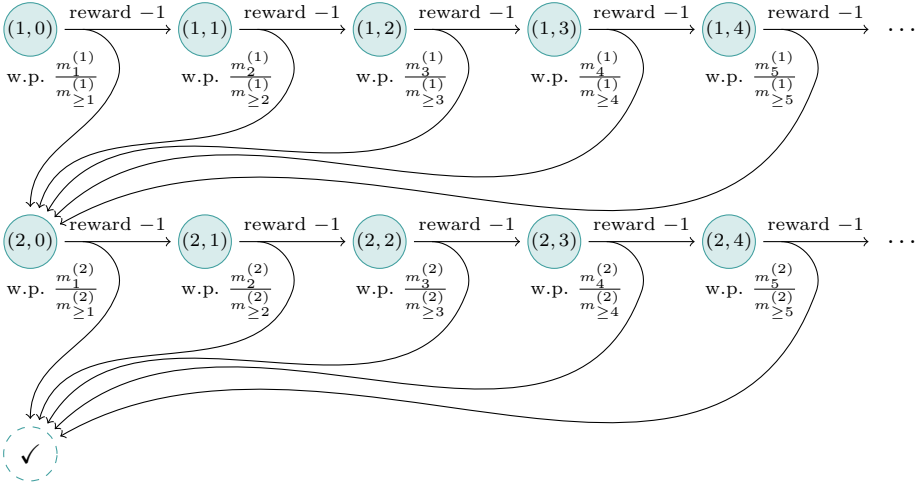
\begin{figure}[t]
    \begin{tikzpicture}
      \newcommand{\xscale}{2.5}
      \node[state, inner sep=0pt] (A0) at (0 * \xscale, 0) {\scriptsize $(1, 0)$};
      \node[state, inner sep=0pt] (A1) at (1 * \xscale, 0) {\scriptsize $(1, 1)$};
      \node[state, inner sep=0pt] (A2) at (2 * \xscale, 0) {\scriptsize $(1, 2)$};
      \node[state, inner sep=0pt] (A3) at (3 * \xscale, 0) {\scriptsize $(1, 3)$};
      \node[state, inner sep=0pt] (A4) at (4 * \xscale, 0) {\scriptsize $(1, 4)$};
      \node[state, draw=none, fill=none] (A5) at (5 * \xscale, 0) {$\dots$};
      \node[state, inner sep=0pt] (B0) at (0 * \xscale, -3) {\scriptsize $(2, 0)$};
      \node[state, inner sep=0pt] (B1) at (1 * \xscale, -3) {\scriptsize $(2, 1)$};
      \node[state, inner sep=0pt] (B2) at (2 * \xscale, -3) {\scriptsize $(2, 2)$};
      \node[state, inner sep=0pt] (B3) at (3 * \xscale, -3) {\scriptsize $(2, 3)$};
      \node[state, inner sep=0pt] (B4) at (4 * \xscale, -3) {\scriptsize $(2, 4)$};
      \node[state, draw=none, fill=none] (B5) at (5 * \xscale, -3) {$\dots$};
      \node[state selected] (C0) at (0 * \xscale, -6) {$\checkmark$};
      \drawTransitionsUnknown{A}{B}{m^{(1)}}{\tfrac}
      \drawTransitionsUnknown{B}{C}{m^{(2)}}{\tfrac}
    \end{tikzpicture}
    \caption{
      Markov chain of a job with \emph{two stages} of service.
      Each stage~$i$ requires an unknown amount of service sampled from distribution~$m^{(i)}$, so the service time is a priori unknown, but the agent learns when the transition from the first to the second stage occurs.
      A job's state is thus a pair consisting of the current stage and the attained service of the current stage.
      The result is, roughly speaking, two stacked instances of the Markov chain from \cref{fig:job_markov_chain_unknown}.
    }
    \label{fig:job_markov_chain_stages}
  \end{figure}%
}%
}

Another famous application of the Gittins policy is scheduling in single-server queues, particularly the M/G/1 queue \citep{sevcik_use_1971, sevcik_scheduling_1974, olivier_kostenminimale_1972, klimov_time-sharing_1974, klimov_time-sharing_1978, aalto_gittins_2009, aalto_minimizing_2023, scully_gittins_2021, bertsimas_optimization_1999, bertsimas_achievable_1995, whittle_tax_2005}.
Here the problem is to schedule jobs that arrive over time on a single server in order to minimize their \emph{mean latency}---also called their response time or sojourn time---which is the mean amount of time between a job arrives and when it completes.

In this section, we give a brief overview of how to formulate job scheduling in terms of MCS.
We first explain the arrival-free \emph{batch setting}, then explain how the theory extends to handle (time-homogeneous) \emph{Poisson arrivals}.
For simplicity, we focus throughout on the unweighted case.
See \citet{scully_gittins_2021} for an account of most of the features the Gittins policy is known to handle in the M/G/1, including unknown and state-varying job weights, and \citet{glazebrook_analysis_2003} for an additional discussion of priority constraints.

\subsubsection{Representing batch scheduling as MCS}

\figJobMarkovChainKnown

Broadly, (single-server, discrete-time) \emph{batch scheduling} is a family of problems where an agent has $n$ \emph{jobs} to complete using a single server.
Every time step, the agent chooses one job to receive one time unit of service.
Each job requires a (strictly positive) number of time units at the server to complete, which we call the job's \emph{service time}.
As we discuss below, these service times might be known, unknown, or partially known to the agent.
The agent's goal is to optimize some metric related to the jobs' \emph{completion times}, where a job's completion time is the time step when it completes.
\footnote{%
  To be precise: letting the first time step have index~$1$, a job's completion time is the index of the time step during which it receives its last unit of service.
  For example, if a job with service time~$t$ is served starting at time~$1$ until it completes, then it receives service at times $1, \dots, t$ and has completion time~$t$.
}
We focus on minimizing the expected values of the following three metrics:
\*[1.] The \emph{earliest completion time} is the minimum of all jobs' completion times.
\* The \emph{$k$th completion time} is the $k$th least among all jobs' completion times.
First completion time is then the special case where $k = 1$.
\* The \emph{total completion time} is the sum of all jobs' completion times.
An equivalent metric is \emph{mean completion time}, which is total completion time times~$1/n$.
These are the metrics most closely related to minimizing mean latency in queues with arrivals.
\*/

In the above, we stated that service times might be known, unknown, or partially known.
Specifically, we assume that \emph{each job's service can be modeled as a Markov chain}, with different jobs' Markov chains evolving independently.
A job's service time is then the number of transitions it takes to reach a \emph{completed} state, which we denote by~$\checkmark$.
The appropriate Markov chain for each job depends on what the agent knows about that job's service time.
For example:
\*[a.] For jobs with known service time, we use the Markov chain in \cref{fig:job_markov_chain_known}.
A job's state here is its \emph{remaining service time}.
With each unit of service, the remaining service time decrements by~$1$, with completed state~$\checkmark$ taking the place of~$0$ remaining service time.
In this case, the Gittins index of a job is simply its (negative) remaining service time
\[
  G(s) = -s
\]
and the Gittins policy reduces to (discretized) shortest remaining processing time \citep{schrage_proof_1968}.
\* For jobs with unknown service time sampled from some known distribution~$m$, we use the Markov chain in \cref{fig:job_markov_chain_unknown}.
A job's state here is its \emph{attained service}.
With each unit of service, the attained service typically increments by~$1$, but there is a chance to transition to the completed state~$\checkmark$.
In this case, the Gittins index can be written~\citep{aalto_gittins_2009, aalto_properties_2011}
\[
  G(s) = -{\smashoperator[l]{\inf_{s' > s}} \frac{\E_{t \sim m}{\min(t, s') - s \given t > s}}{\P_{t \sim m}{t \leq s' \given t > s}}}
\]
where $t \sim m$ is service time of the job, namely the random number of steps it takes to reach~$\checkmark$ starting from~$0$.
The intuition is that $s'$ represents a deadline by which we might hope the job will finish, the fraction is the \emph{mean time per completion} ratio for deadline~$s'$, and $G(s)$ is the (negated) best---namely, least---such ratio achievable.
\* An intermediate case where the agent receives partial information about a job's service time is illustrated in \cref{fig:job_markov_chain_stages}.
Here a job's service has two stages, and while the agent does not know a priori how long either stage will take, the agent is notified when the job advances from the first stage to the second.
\*/
Throughout, we give all states reward~$-1$ to represent the fact that one unit of time passes per transition---though, as we discuss below, this is not essential.
The three examples presented here generalize easily to a large number of possible Markov chain variants.

\figJobMarkovChainUnknown

Having specified the job model we are working with, we can observe the following about the different metrics we hope to optimize.
The main takeaway is that \emph{all the problems are variants of undiscounted MCS}---and, thus, they can be solved with the Gittins policy.
\*[i.] Minimizing expected earliest completion time is MCS (\cref{def:mcs}), because we receive reward~$-1$ per time step until a job completes.
Therefore, by \cref{thm:gittins_optimal}, the Gittins policy minimizes expected earliest completion time.
\* Minimizing expected $k$th completion time is MCS-$k$ (\cref{def:mcs-k}), because we receive reward~$-1$ per time step until $k$ jobs complete.
Therefore, by \cref{thm:gittins_optimal_mcs-k}, the Gittins policy minimizes expected $k$th completion time.
\* Minimizing expected total completion time does not directly fit into MCS or MCS-$k$.
However, we can express total completion time $C$ as the sum of $k$th completion times $C_{(k)}$, namely
\[
  C = \sum_{k = 1}^n C_{(k)}
  .
\]
Because the Gittins policy minimizes $\E{C_{(k)}}$ for all~$k$, it also minimizes~$\E{C}$.
\*/

Before discussing adding arrivals, let us briefly revisit one of our assumptions, namely that every transition incurs cost~$1$ (and thus yields reward~$-1$).
The MCS framework allows different costs in different states, so the entire discussion above generalizes to jobs where different transitions incur different costs.
One can interpret this as scheduling jobs where \emph{different transitions take different amounts of time}.
In particular, transitions that take a long time can be viewed as \emph{uninterruptible segments}.
That is, one can represent an interruptible part of a job using many low-cost transitions, and one can represent a uninterruptible part using a single high-cost transition.

\figJobMarkovChainStages

\subsubsection{From batch scheduling to queue scheduling}

Broadly, a (single-server, discrete-time) \emph{queue scheduling} problem is a batch scheduling problem with the following changes:
\*[1.] Instead of $n$ jobs being present at time~$0$, jobs arrive over time.
We usually consider an infinite sequence jobs arriving according to some stochastic process.
\* Instead of optimizing metrics related to completion time, we optimize metrics related to \emph{latency} (also called response time or sojourn time), where a job's latency is its completion time minus its arrival time.
We usually consider metrics that are (limiting) averages over the sequence of arriving jobs, such as mean latency.
\*/

If new Markov chains can appear over time in an arbitrary fashion, then the Gittins policy is no longer optimal for MCS, let alone MCS-$k$.
However, there is a more general case where the Gittins policy remains optimal: when arrivals are generated by a \emph{memoryless and time-homogeneous} stochastic process \citep{whittle_arm-acquiring_1981}.
This means \emph{Poisson arrival times}, where each arriving job's initial state is drawn i.i.d. from some initial state distribution (or a similar arrival process---see \cref{rmk:mg1_extensions}).
In the language of queueing theory, this is the \emph{M/G/1} queueing model, which allows Poisson arrival times and general service time distributions.
We now illustrate one type of result that can be shown, though we emphasize that it is not the most general result possible \citep{scully_gittins_2021, glazebrook_analysis_2003}.

\begin{definition}
  \label{def:mg1}
  A \emph{Markov chain M/G/1} model consists of:
  \*[1.] A \emph{job Markov chain} $(S, \partial S, p, r)$ such that $r(s) < 0$ for all non-terminal states~$s \in S \setminus \partial S$.
  \* An \emph{initial state distribution} $p_0 \in \calP(S \setminus \partial S)$.
  \* An \emph{arrival rate} $\lambda > 0$.
\*/
\end{definition}

Here, jobs arrive at the increments of a Poisson process of rate~$\lambda$, and each job's initial state is sampled i.i.d. from~$p_0$.
Each job's Markov chain advances while it is in service.
It takes $-r(s)$ time for a job to transition from state~$s$ to its next state (possibly with some randomness---see \cref{rmk:mg1_extensions}), and a job cannot be interrupted during a state transition.
A \emph{scheduling policy} for the Markov chain M/G/1 repeatedly chooses which job to serve next, at which point the job remains in service until it has completed a state transition.
We also allow the scheduling policy to leave the server idle for a time.

We say the M/G/1 is \emph{stable} if the expected interarrival time---that is, $1/\lambda$---is greater than the expected service time---that is, the amount of time it takes a job to transition from an initial state drawn from $p_0$ to a terminal state in~$\partial S$.
This ensures ergodicity of the system under any (non-idling) scheduling policy, in which case the following optimality statement holds for the Gittins policy.

\begin{theorem}
  \label{thm:gittins_optimal_mg1}
  In any stable Markov chain M/G/1, among all scheduling policies, the Gittins policy minimizes mean latency.
\end{theorem}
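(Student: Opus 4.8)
The plan is to reduce mean-latency minimization in the queue to the batch results \cref{thm:gittins_optimal,thm:gittins_optimal_mcs-k} by a regeneration (busy-period) argument, and then to absorb the Poisson arrivals using the fact that a memoryless, time-homogeneous arrival stream does not disturb Gittins optimality. First I would reduce mean latency to a per-busy-period quantity. By stability every non-idling scheduling policy renders the system ergodic, and idling only postpones completions, so it suffices to consider non-idling policies; for these the \emph{workload} (total remaining service time in the system) evolves identically under all policies --- it decreases at unit rate while the system is nonempty and jumps by a $p_0$-distributed amount at each Poisson arrival, even granting that state transitions cannot be interrupted --- so the busy periods (maximal nonempty intervals) and the set of jobs in each busy period are policy-independent. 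Coupling the arrival process and each job's sequence of Markov-chain transitions, we may take them literally identical across policies. The cycles (an idle period followed by a busy period) are i.i.d., so by the renewal--reward theorem the time-average of the number in system $N(t)$ --- which is mean latency up to the factor $\lambda$, by Little's law --- equals $\E{\int_{\mathrm{BP}} N(t)\,dt}$ over the (policy-independent) expected cycle length. Hence it suffices to show the Gittins policy minimizes $\E{\int_{\mathrm{BP}} N(t)\,dt}$.

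Second I would re-express this per-busy-period area through completion times. Pathwise $\int_{\mathrm{BP}} N(t)\,dt = \sum_j (C_j - a_j)$, the sum over jobs $j$ of the busy period of completion time minus arrival time; since the $a_j$ and the index set are policy-independent, it is equivalent to minimize $\E{\sum_j C_j^{\mathrm{rel}}}$, where $C_j^{\mathrm{rel}}$ is the completion time relative to the start of the busy period. Writing $K$ for the (policy-independent) number of jobs in the busy period and $D_{(k)}$ for the $k$th smallest $C_j^{\mathrm{rel}}$, this is $\E{\sum_{k=1}^{K} D_{(k)}}$. Just as in the batch discussion of \cref{sec:examples:queue}, the goal now is to show the Gittins policy minimizes the expected $k$th completion epoch $\E{D_{(k)}}$ \emph{for every $k$ simultaneously}, whence it minimizes the sum. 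Each such subproblem is an undiscounted ``collect reward $-1$ per unit time until $k$ Markov chains terminate'' problem --- the analogue of \cref{thm:gittins_optimal_mcs-k} --- except that new Markov chains (jobs) now arrive during the horizon along a Poisson process, each with initial state drawn i.i.d.\ from $p_0$.

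Third I would close the gap created by the arrivals, which I expect to be the main obstacle. Since the arrival stream is memoryless and time-homogeneous and each arrival's initial state is independent of everything else, it forms an ``arm-acquiring'' process that is statistically independent of the current configuration and of all past actions, and in particular never changes the relative ordering of the Gittins indices of the chains already present; this is exactly the setting of \citet{whittle_arm-acquiring_1981}, in which the interchange / ``prevailing-charge'' argument behind \cref{thm:gittins_optimal,thm:gittins_optimal_mcs-k} still closes once one checks that the extra arrivals contribute the same bookkeeping terms under every policy. The delicate points are: (i) making rigorous that memorylessness makes the arrival stream ``ignorable'', so an optimal policy can still be taken to be an index policy for the same Gittins indices; (ii) handling the random but policy-independent summation bound $K$ together with the interchange of summation and expectation; and (iii) the care needed for an infinite-horizon / average-cost optimality statement in continuous time with uninterruptible transitions --- existence of optimal policies, validity of the renewal--reward reduction, and ruling out pathological idling. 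An alternative route that repackages (i)--(iii) is to formulate the Markov chain M/G/1 directly as a semi-Markov decision process and verify that the Gittins index policy solves its average-cost optimality equation, using a bias function assembled from the batch value functions; this avoids the order-statistics decomposition at the cost of the usual regularity conditions of average-cost SMDP theory.
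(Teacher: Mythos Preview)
The paper does not actually prove \cref{thm:gittins_optimal_mg1}. Immediately after stating it, the paper writes that ``to the best of our knowledge, there are no proofs of \cref{thm:gittins_optimal_mg1} that build directly on the techniques used to prove \cref{thm:gittins_optimal, thm:gittins_optimal_mcs-k},'' and that the most general known proofs instead use \emph{work conservation laws} or similar M/G/1-specific reasoning, citing \citet{bertsimas_optimization_1999}, \citet{scully_gittins_2021}, and \citet{glazebrook_analysis_2003}. A vanishing-discount route comes closest to the batch argument but requires restrictive hypotheses such as a finite job state space.

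Your proposal is therefore genuinely different from the route the paper points to---and in fact attempts exactly what the paper says has not been carried out at this level of generality. Steps~1 and~2 (busy-period decomposition via policy-independence of the workload, renewal--reward plus Little's law, rewriting the area as $\sum_k D_{(k)}$) are sound and standard. The real content is step~3, and there the proposal has a gap rather than a proof. \Citet{whittle_arm-acquiring_1981} treats the \emph{discounted} arm-acquiring bandit, not the undiscounted MCS-$k$-with-Poisson-arrivals problem you need, and neither the interchange nor the prevailing-charge argument transfers for free to the undiscounted, infinite-state, semi-Markov setting of \cref{def:mg1}. You correctly flag (i)--(iii) as the delicate points, but each of them is essentially the whole theorem; in particular, (i)---showing that memoryless arrivals leave the Gittins ordering optimal in the undiscounted average-cost sense---is precisely where the achievable-region and work-conservation proofs spend all their effort. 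Your alternative SMDP route faces the same obstacle: constructing a bias function from the batch value functions that verifies the average-cost optimality equation under Poisson arrivals is the very reduction the paper flags as missing. So the plan is reasonable as an outline, but ``cite Whittle and check the bookkeeping'' does not close the argument at the stated generality.
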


Unfortunately, to the best of our knowledge, there are no proofs of \cref{thm:gittins_optimal_mg1} that build directly on the techniques used to prove \cref{thm:gittins_optimal, thm:gittins_optimal_mcs-k}.
Proofs using a \emph{vanishing discount} approach---see \citet[Chapter~3]{gittins_multi-armed_2011}---come the closest, but they require restrictive assumptions, such as for instance the job Markov chain having a finite state space.
Thus far, the approaches that yield the most general results use \emph{work conservation laws} or similar M/G/1-specific reasoning \citep{bertsimas_optimization_1999, scully_gittins_2021, glazebrook_analysis_2003}.

\begin{remark}
\label{rmk:mg1_extensions}
The optimality of the Gittins policy continues to hold under the following two types of extensions:
\*[subenv] One can generalize slightly beyond Poisson arrivals to other time-homogeneous arrival processes.
The simplest case of this is \emph{batch Poisson} arrivals, where at each arrival time, multiple jobs can arrive at once \citep{scully_gittins_2021}.
In this batch setting, the lists of initial states of jobs in a batch must be i.i.d. across batches.
Another time-homogeneous case occurs if all jobs are time-slotted---meaning, if all jobs' transitions take an integer number of time unit---then one can work with arrival processes that are similarly time-slotted, even if they are not Poisson.
\* One can generalize from deterministic to randomized transition times in job Markov chains, in which case $-r(s)$ should be the \emph{expected} amount of time it takes to transition from $s$ to the next state.
In this more general model, instead of specifying a job via $p$ and $r$, one specifies a kernel $q : S \to \calP(S \times [0, \infty))$, where $q(s)$ is the joint distribution over $(\text{next state}, \text{transition time})$ pairs.
Even though one $(p, r)$ pair can arise from many possible kernels~$q$, the Gittins index depends only on $p$ and~$r$, though the mean latency achieved is affected by~$q$.
  This generalization is possible because it embeds into the continuous-time framework of \citep{scully_gittins_2021}.
\*/
\end{remark}

Of course, the M/G/1 is a rather specialized queueing model, so it is natural to ask whether the Gittins policy performs strongly in more general settings---say, with more than one server or with non-Poisson arrivals.
Thus far, it appears the answer is often \emph{yes}: there are now several results showing the Gittins policy is in some sense approximately optimal, including the multiserver M/G/$k$ \citep{glazebrook_parallel_2001, glazebrook_analysis_2003, scully_gittins_2020, scully_new_2022, grosof_optimal_2022} and, most recently, the non-Poisson G/G/1 and G/G/$k$ \citep{hong_performance_2024}.
Even under adversarial arrivals---in the sense of adversarially chosen arrival times and initial states, but where the job still evolves stochastically according to a known transition kernel during service---it is known that a variation of the the Gittins policy is a $2$-approximation for mean \emph{completion time} \citep{megow_tight_2014}, meaning each job's clock starts at time~$0$.
It is an open question whether a similar guarantee holds for mean \emph{latency}, where each job's clock starts when it arrives.

\subsection{Branching bandits}
\label{sec:examples:branching}

To arrive at a general formulation of the Gittins index, following our exposition of Pandora's box, we asked: \emph{is there a general theory this solution is an example of?}
In the same spirit, let us note that the Gittins index for the M/G/1 queue does not directly arise as an instance of MCS, and again ask the same question.

Compared to MCS, what is new about the M/G/1 setting is that every time an action is chosen, a new set of Markov chains arrives, according to the Poisson process and initial state distribution, and becomes part of the total action space.
More precisely, when a Markov chain in state~$s$ transitions to a new state $s' \sim p(s)$, it also gives rise to a random set of new Markov chains in some initial states drawn from a known distribution.

The appropriate abstract generalization of the M/G/1 setting is the so-called \emph{branching bandit} setting of \citet{weiss_branching_1988}.
The branching bandit problem is like MCS, but it replaces each Markov chain with a particular kind of \emph{branching process} (namely, a multi-type Galton--Watson process), defined in the following sense.
When advancing a branching process, instead of its state~$s$ transitioning to exactly one next state, it transitions by replacing the current state with multiple new states according to a suitable probability kernel, which describes the joint distribution over the number of new states and their values.
This means that the overall problem's state space is now described by tuples $(s_1,..,s_n)$ of variable length, where $n$ varies according to the actions selected by the policy and the random transition outcomes.

Variants of the branching bandit problem have been studied in the discounted setting \cite{gittins_multi-armed_2011, weiss_branching_1988, meister_optimizing_2023}, as well as the undiscounted Pandora's box setting \cite{boodaghians_pandoras_2020}: in these cases, one can define a Gittins index in an appropriate sense, and prove that the resulting Gittins policy is optimal.
However, to the best of our knowledge, there is not yet a proof that holds at the level of generality of \cref{thm:gittins_optimal}.
For instance, \citet{weiss_branching_1988} requires each Markov chain to have finitely many states and satisfy \cref{asm:discount:discounted}.
In principle, a more general dynamic programming proof similar to the one of \cref{sec:proof} should be feasible, but we are not aware of one.\footnote{
    In the language of \cref{sec:beyond:optional_inspection:progress}, we believe that one should be able to show that all branching processes satisfy the Whittle condition in an appropriate sense, and use this to construct an optimality argument.
}

\section{Examples: beyond optimality}
\label{sec:beyond}

We now discuss three examples where Gittins indices can be defined and applied, but do not result in an optimality proof.
These are (a)~certain forms of Bayesian optimization, where optimality fails due to the presence of correlations between different Markov chains (\cref{sec:beyond:bayesopt}); (b)~Pandora's box with optional inspection, where additional decisions that can be made render the problem more complicated (\cref{sec:beyond:optional_inspection}); and (c)~minimizing tail latency in queues, where one seeks to perform well in terms of objectives beyond average rewards (\cref{sec:beyond:tail}).

\subsection{Bayesian optimization}
\label{sec:beyond:bayesopt}

\newcommand{\figBayesoptResults}{%
  \begin{figure}[t]
    \includegraphics{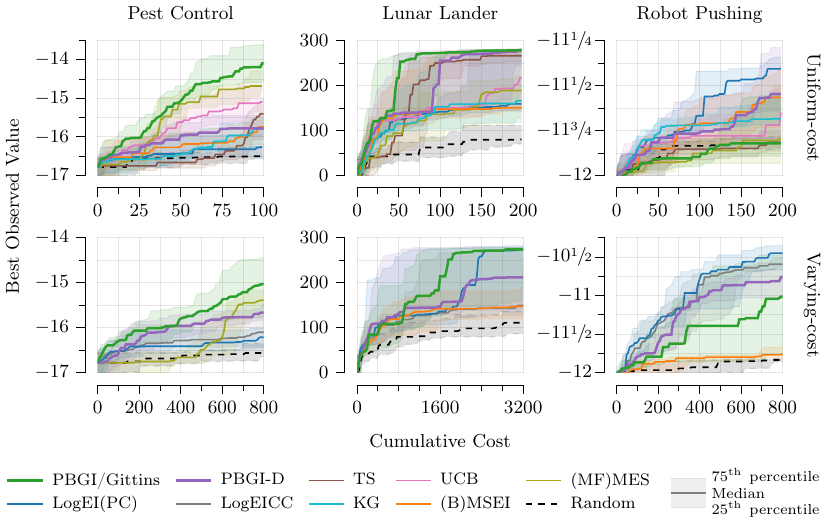}
    \caption{
      Results reproduced from \citet{xie_cost-aware_2024}: empirical performance (higher is better) of the Gittins policy for Bayesian optimization, also called \emph{PBGI} (green) in the legend, against other baseline policies, shown in terms of medians and quartiles over 16 seeds.
      The task here is to optimize benchmark objective functions, constructed to resemble real-world black-box optimization settings.
      We plot the best observed function value, in terms of medians and quartiles from 16 trials with different random initializations.
      We see that PBGI, along with a minor variant called PBGI-D (purple), claim the top-performing spot in the first two problems, and are reasonably competitive in the third.
      This holds for both $c(x)$ chosen to be a constant function, termed the \emph{uniform-cost} setting, and $c(x)$ non-constant, termed the \emph{varying-cost} setting.
    }
    \label{fig:bayesopt_results}
  \end{figure}%
}

\emph{Bayesian optimization} \citep{frazier_bayesian_2018, garnett_bayesian_2023} is a broad class of algorithms for global optimization of unknown functions which are expensive to evaluate.
In most instances, such algorithms require only \emph{black-box access} to the unknown function, meaning the only way to learn about the function is by evaluating it.
Bayesian optimization is a workhorse tool in areas like machine learning hyperparameter tuning \cite{snoek_practical_2012}, where it is deployed in production at most major technology companies, and is available as standard functionality in popular artificial intelligence operations platforms.

Let $f : X \to \bbR$ be the unknown function, where $X$ is allowed to be a general set, with $X = [0,1]^d$ a typical choice.
Bayesian optimization works by building a \emph{probabilistic model} for the unknown function $f$, typically in a Bayesian manner by placing a Gaussian process prior over it.
Function evaluations are incorporated into the probabilistic model using Bayes' Rule, by conditioning the prior on the location and value of previous function evaluations.
With this setup, one aims to design a policy that adaptively chooses inputs $x_1, \dots, x_T$ in order to find the global optimum in as few function evaluations as possible.

There are multiple distinct criteria one use to can study performance of Bayesian optimization algorithms.
The simplest, arguably, is expected \emph{simple regret} with respect to the prior, namely
\[
\E[\bigg]{\sup_{x\in X} f(x) - f^*_T}
\]
where $f^*_t = \max_{u \in \{1, \dots, t\}} f(x_u)$ and the unknown function $f$ is sampled from the prior.
In this setting, the question of how to adaptively choose the next data point $x_{t+1}$, given a set of previous function evaluations $(x_1,f(x_1)),\dots,(x_t,f(x_t))$ can be formalized to define an MDP.
Solving this MDP---and, indeed, almost all other MDPs which occur in Bayesian optimization contexts---is known to be intractable.
However, by applying a one-step greedy approximation to this MDP's dynamic programming equations, one arrives at the \emph{expected improvement acquisition function}
\[
\label{eq:bayesopt_ei_acquisition}
\EI_t(x) = \E{\max(f(x) - f^*_t, 0)}
.
\]
A very similar expected improvement formula above previously came up in \cref{sec:pandora}.
This is not a coincidence: consider a mild generalization known as \emph{cost-aware Bayesian optimization}, specifically the \emph{cost-per-sample} formulation, where one adds a sequence of costs $c(x_t) > 0$ to the above objective, and allows the algorithm to decide when to stop, as opposed to having $T$ be a fixed hyperparameter.
Note that that $\E{\sup_{x\in X} f(x)}$ is constant with respect to the policy.
Using this, following \citet{xie_cost-aware_2024}, if we switch from minimizing regret to maximizing the negation of all terms, and drop the aforementioned constant, we obtain
\[
\E[\Bigg]{f^*_T - \sum_{t=1}^T c(x_t)}
\]
which is the same as the objective of Pandora's box in \cref{eq:pandora_objective}.
We therefore conclude:
\*[{(\ensuremath{\star})}] \emph{Cost-per-sample Bayesian optimization, under the expected simple regret performance criterion, is a Pandora's box problem with correlations between boxes.}
\*/
Compared to the Pandora's box of \cref{sec:pandora}, the crucial difference here is that there is now a potentially uncountable number of boxes, indexed by $X$, and rewards in different boxes are correlated.
These correlations completely break the argument of \cref{thm:gittins_optimal}, which no longer applies.
On the other hand, the same correlations also make the optimal policy intractable: one can therefore ask whether the Gittins policy at least makes sense as a candidate to consider implementing in practice.

\subsubsection{Defining a Gittins index for Bayesian optimization}
\label{sec:beyond:bayesopt:definition}

To proceed, we view cost-per-sample Bayesian optimization as a variant of MCS where whenever one Markov chain advances, \emph{the transition kernels of all other Markov chains also update}.
In Bayesian optimization, this transition kernel update occurs due to updating the posterior distribution of the unknown function~$f$.
The appropriate definition of a Gittins policy for this variant of MCS---and thereby for Bayesian optimization---is just like the Gittins policy for ordinary MCS, but where we make sure to \emph{always use the updated transition kernel when defining Gittins indices}.
We now make this precise.

Suppose we have already observed the values $f(x_1), \dots, f(x_t)$, resulting in a posterior distribution $p_t$ for~$f \mid f(x_1), \dots, f(x_t)$.
Based on this posterior distribution, we define the \emph{Gittins index of an input point~$x$} in much the same way as the Gittins index of a Pandora's box in \cref{sec:pandora:gittins}.
Specifically, we think of input point~$x$ as corresponding to a box whose opening cost is~$c(x)$ and whose value distribution is that of $f(x)$ for $f \sim p_t$:
That is, we define the expected improvement function at time~$t$ to be
\[
  \label{eq:bayesopt_ei}
  \EI_t(x, \alpha) &= \E_{f \sim p_t}{\max(f(x) - \alpha, 0)} - c(x)
  \\
  &= (\mu_t(x) - \alpha) \Phi\left(\frac{\mu_t(x) - \alpha}{\sigma_t(x)}\right) + \sigma_t(x)\phi\left(\frac{\mu_t(x) - \alpha}{\sigma_t(x)}\right) - c(x)
\]
where $\mu_t$ and $\sigma_t$ are the mean and standard deviation of the posterior Gaussian process, and $\phi$ and $\Phi$ are the standard Gaussian PDF and CDF, respectively.
From this, we define the Gittins index function at time~$t$ to be $G_t$, where $G_t(x)$ solves the root-finding problem
\[
  \label{eq:bayesopt_gittins}
  \EI_t(x, G_t(x)) = 0
  .
\]
Here, the expected improvement function in \cref{eq:bayesopt_ei_acquisition} is the special case of \cref{eq:bayesopt_ei} where we always plug in $\alpha = f^*_t$.
At time~$t$, meaning after evaluating $f(x_1), \dots, f(x_t)$, we need to choose between one of the following actions.
\*[a.] Choose a next point $x_{t + 1}$ to evaluate.
As described above, choosing to evaluate $x_{t + 1} = x$ has Gittins index $G_t(x)$.
\* Choose to stop by setting $T = t$ and taking an observed value, namely $f^*_T = \max(f(x_1), \dots, f(x_T))$.
This action yields reward $f^*_T$ and ends the process---so, just as in ordinary Pandora's box, it has Gittins index~$f^*_T$.
\*/
The \emph{Gittins policy for Bayesian optimization}, also called the \emph{Pandora's box Gittins index (PBGI)} by \citet{xie_cost-aware_2024} to emphasize its connection with Pandora's box, is then the policy that always takes the action of maximum Gittins index.

One point of subtlety is that, even though \cref{eq:bayesopt_ei, eq:bayesopt_gittins} are essentially the same as their Pandora's box counterparts in \cref{eq:pandora_ei, eq:pandora_gittins}, which involve no correlations, it is \emph{not correct} to say that \cref{eq:bayesopt_ei, eq:bayesopt_gittins} ignore correlations.
This is because they use the posterior distribution~$p_t$, which accounts for correlations in its definition.
Thus, a more accurate description would be to say that \cref{eq:bayesopt_ei, eq:bayesopt_gittins}, in some sense, account for \emph{correlations from the past}, but disregard \emph{correlations in the future}.

\subsubsection{Maximizing the Gittins index numerically}
\label{sec:beyond:bayesopt:computation}

Since the state space is infinite, maximizing $G_t(x)$ in order to select the next data point requires gradient-based optimization.
In Bayesian optimization, this step is called \emph{acquisition function optimization}, and is generally performed numerically using multi-start variants of either LBFGS or ADAM.
The challenge now is that one needs to compute the gradient $\nabla G_t(x)$, which involves automatically differentiating through the root-finding problem.
To avoid differentiating through the individual steps of bisection search or other root-finding algorithm, \citet{xie_cost-aware_2024} show that $\nabla G_t(x)$ admits a particularly simple form, namely
\[
\label{eq:bayesopt_gittins_gradient}
\nabla G_t(x) = \nabla \mu(x) + \frac{\phi\left(\frac{\mu_t(x) - G_t(x)}{\sigma_t(x)}\right) \nabla \sigma_t(x) - \nabla c(x)}{\Phi\left(\frac{\mu_t(x) - G_t(x)}{\sigma_t(x)}\right)}
.
\]
Using this, one can compute $G_t(x)$ numerically using bisection search, then plug the result in to \cref{eq:bayesopt_gittins_gradient} to obtain the gradient.
This approach is an instance of a general principle used throughout the automatic differentiation literature \cite{agrawal_differentiable_2019, blondel_efficient_2022}: one can differentiate through the solution of a root-finding problem numerically by expressing the respective derivative in terms of the function defining the root-finding problem, together with the root.

\figBayesoptResults

\subsubsection{Performance of the Gittins index in Bayesian optimization}

With an appropriate Gittins policy---which is not optimal---defined, the question becomes: is it strong?
Empirically, at least for the kind of Gaussian processes which are used in Bayesian optimization benchmarking, the answer appears to be \emph{yes}: \citet{xie_cost-aware_2024} show that the Gittins policy either matches or outperforms most baselines---this is shown in \cref{fig:bayesopt_results}.

This connection appears to be new: to the best of our knowledge, it remained unnoticed until the recent work of \citet{xie_cost-aware_2024}, and prior to that, only \citet{persky_exploration_2021} had approached Bayesian optimization using a discounted-bandit variant of Gittins indices.
One benefit the Gittins index perspective brings to Bayesian optimization is that Gittins indices naturally handle different input points having different function evaluation costs---an ongoing challenge in Bayesian optimization \citep{xie_cost-aware_2024, lee_efficient_2020, lee_nonmyopic_2021}---because the Pandora's box problem naturally allows different boxes to have different opening costs.

Developing a theory that characterizes the strengths and limitations of the Gittins policy's performance in Bayesian optimization---for instance, in the language of regret or approximation ratio bounds---is the subject of ongoing research.
Recent results on Pandora's box with general joint value distributions \citep{gergatsouli_weitzmans_2023, chawla_approximating_2022} may provide a good starting point, though it is likely that stronger guarantees might be possible when focusing on the multivariate Gaussian distributions that arise in typical Bayesian optimization priors.

Another rich future direction is developing versions of the Gittins policy for more advanced Bayesian optimization settings, such as \emph{multi-fidelity optimization} for applications like hyperparameter tuning \citep{zimmer_auto-pytorch_2021, eggensperger_hpobench_2021}.
These are problems where there are multiple actions one can take at any given input point---for instance, one can either fully evaluate the function, or obtain a cheap but noisy value estimate.
Some such problems may share features of our next example: a Pandora's box variant with two actions available for closed boxes.

\subsection{Pandora's box with optional inspection}
\label{sec:beyond:optional_inspection}

A natural question about the Pandora's box problem is: what changes if one is allowed to select a closed box---without opening it first?
This variant of the problem is called Pandora's box with \emph{optional inspection} (also known as nonobligatory inspection) \citep{doval_whether_2018}, in contrast with the original problem's \emph{required inspection} (also known as obligatory inspection).
Optional inspection results in a much harder problem than required inspection, with \citet{fu_pandora_2023} showing it is NP-hard in an appropriate computational sense.
In particular, while the Gittins index can still be defined, the Gittins policy is no longer optimal for reasons we explain below.
With this said, the Gittins index is still a critical tool for the optional inspection setting: a number of approximation algorithms for Pandora's box with optional inspection are known \citep{scully_local_2024, chawla_combinatorial_2024, beyhaghi_pandoras_2023, fu_pandora_2023, guha_information_2008, beyhaghi_pandoras_2019}, and all of them use the Gittins policy as a subroutine.

\newcommand{\figPandoraOptionalMdp}{%
  \begin{figure}[t]
    \begin{tikzpicture}
      \drawBoxClosed{-6.5, 0.5}{$\boxclosed$}

      \coordinate (A) at (-2.5, 0.5);
      \draw (-5.5, 0.5) -- (A)
        node[midway, above] {$\op$}
        node[midway, below] {\scriptsize reward $-c$, next state $\sim p$}
        -- +(0.01, 0);

      \node at (-1.625, +3) {$\vdots$};
      \draw[<-] (-1.25, +2) -- +(-0.25, 0) to[out=180, in=0] (A);
      \draw[<-] (-1.25, +1) -- +(-0.25, 0) to[out=180, in=0] (A);
      \draw[<-] (-1.25, 0) -- +(-0.25, 0) to[out=180, in=0] (A);
      \draw[<-] (-1.25, -1) -- +(-0.25, 0) to[out=180, in=0] (A);
      \node at (-1.625, -1.75) {$\vdots$};

      \node at (0, +3) {$\vdots$};
      \drawBoxOpen{0, +2}{$3.2$}
      \drawBoxOpen{0, +1}{$3.1$}
      \drawBoxOpen{0, 0}{$3.0$}
      \drawBoxOpen{0, -1}{$2.9$}
      \node at (0, -1.75) {$\vdots$};

      \coordinate (C) at (5, 0.5);
      \draw[<-] (5.25, 0.5) -- (C) -- +(-0.01, 0);

      \node at (1.625, +3) {$\vdots$};
      \draw (1.25, +2) -- +(2.25, 0)
        node[midway, above] {$\tk$}
        node[midway, below] {\scriptsize reward $3.2$}
        to[out=0, in=180] (C);
      \draw (1.25, +1) -- +(2.25, 0)
        node[midway, above] {$\tk$}
        node[midway, below] {\scriptsize reward $3.1$}
        to[out=0, in=180] (C);
      \draw (1.25, 0) -- +(2.25, 0)
        node[midway, above] {$\tk$}
        node[midway, below] {\scriptsize reward $3.0$}
        to[out=0, in=180] (C);
      \draw (1.25, -1) -- +(2.25, 0)
        node[midway, above] {$\tk$}
        node[midway, below] {\scriptsize reward $2.9$}
        to[out=0, in=180] (C);
        \node at (1.625, -1.75) {$\vdots$};

      \drawBoxSelected{6.5, 0.5}{$\checkmark$}

      \draw[orange!85!black, ultra thick, ->] (-6.125, 1.25) to[out=45, in=135]
        node[midway, above=-0.1em] {\small\bfseries $\tk$ (new!)}
        node[midway, below=0.1em] {\scriptsize reward $\E_{v \sim p}{v}$}
        (6.125, 1.25);
    \end{tikzpicture}
    \caption{
      A single Pandora's box with optional inspection is not simply a Markov chain, but an MDP: specifically, from the closed state~$\boxclosed$, one can take either the $\op$ action, which incurs cost but reveals the box's reward, or the $\tk$ action, which takes the box without opening it first.
      Once a box is opened, the only available action is $\tk$.
    }
    \label{fig:pandora_optional_mdp}
  \end{figure}
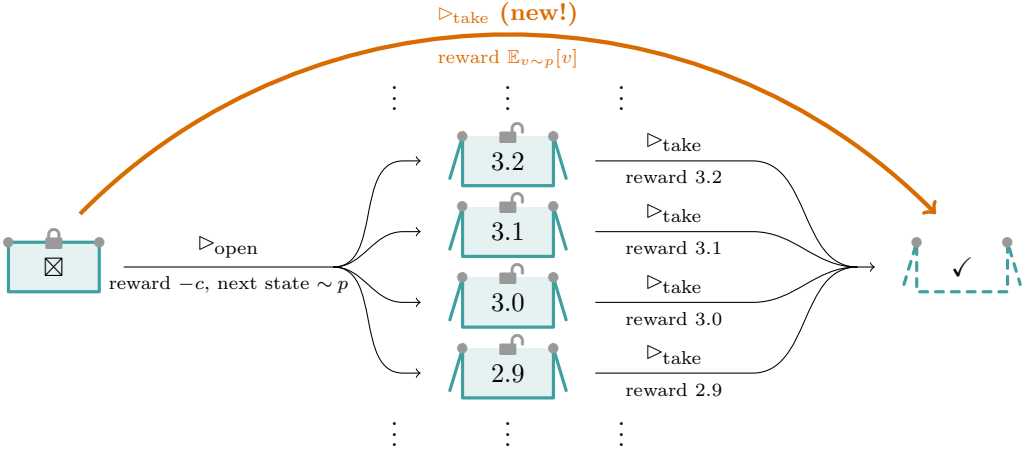%
}
\figPandoraOptionalMdp

The core reason why the Pandora's box problem with optional inspection is difficult is that it is \emph{not} an instance of Markov chain selection (MCS; see \cref{def:mcs}).
Instead, it is an instance of what we call \emph{MDP selection}, where instead of choosing which one of multiple Markov chains to advance at each time step, one chooses which one of multiple MDPs to advance, \emph{along with which action to take in the chosen MDP}.
MDP selection is usually referred to in the Gittins index literature as a the Markovian multi-armed bandit with \emph{bandit superprocesses} \citep{whittle_multi-armed_1980, nash_optimal_1973, brown_optimal_2013, hadfield-menell_multitasking_2015}, but we introduce the MDP selection name for consistency with MCS.
(See also the discussion in \cref{sec:general:mcs:name}.)

The reason Pandora's box with optional inspection is an instance of MDP selection, as opposed to the simpler MCS, is that each box admits two possible actions when it is closed:
\*[a.] \emph{Open}, denoted $\op$ behaves like the Markov chain for required inspection, yielding reward~$-c$ and advances the box to an open state $v \sim p$.
\* \emph{Take}, denoted $\tk$, takes the box without opening it, yielding (expected) reward $\E_{v \sim p}{v}$.
\*/
\cref{fig:pandora_optional_mdp} gives an illustration.

\subsubsection{Why MDP selection is harder than Markov chain selection}

\newcommand{\figureLocalMdpValueOptional}{%
  \begin{figure}[t]
    \hfill%
    \begin{subfigure}{0.475\linewidth}
      \pictureLocalMdpValuePandoraOptional{6}{12}{1}{7}
      \caption{Local MDP value for box~$1$ from \cref{fig:pandora_example}.}
    \end{subfigure}%
    \hfill%
    \begin{subfigure}{0.475\linewidth}
      \pictureLocalMdpValuePandoraOptional{2.6}{13}{2}{3.6}
      \caption{Local MDP value for box~$2$ from \cref{fig:pandora_example}.}
    \end{subfigure}%
    \hfill%
    \caption{
      Analogue of \cref{fig:local_mdp_value} for Pandora's box with optional inspection.
      Value functions of the $(\boxclosed, \alpha)$-local MDP for the two closed boxes in \cref{fig:pandora_example} with three different initial actions: $\op$ (teal), $\tk$ (orange), and $\st$ (yellow-green).
      As in \cref{fig:local_mdp_value}, the optimal action is $\st$ for all values of $\alpha$ above a threshold, and we define the Gittins index $G_i(\boxclosed)$ to be that threshold.
      In these cases, the action that is co-optimal with $\st$ when $\alpha = G_i(\boxclosed)$ is $\op$.
      But when $\alpha$ is lower than another threshold~$H_i$, the optimal first action is $\tk$.
      This means that in the context of a larger Pandora's box with optional inspection problem, or more generally MDP selection, even if we are confident about wanting to play an action on box~$i$, whether we prefer $\op$ or $\tk$ may depend on the states of the other MDPs.
    }
    \label{fig:local_mdp_value_optional}
  \end{figure}%
}

One might hope that the Gittins index approach might extend from MCS to MDP selection.
Indeed, it turns out that one can still define the Gittins index of an MDP in essentially the same way as for a Markov chain, namely using a local MDP (\cref{def:local_mdp}).
Unfortunately, the resulting Gittins policy for MDP selection is generally not optimal.
This is not a surprise: MDP selection is NP-hard, thanks to the aforementioned NP-hardness of Pandora's box with optional inspection \citep{fu_pandora_2023}, so we should not expect the Gittins policy---which can be computed in polynomial time in the finite-state case \citep{balakrishnan_multi-armed_2014, gast_testing_2023, katehakis_multi-armed_1987}---to solve it.
But what specifically prevents the Gittins policy from being optimal?

Let us first clarify what the local MDP and Gittins index look like for an MDP with action space~$A$ instead of a Markov chain.
Just like in \cref{def:local_mdp}, the local MDP is essentially the original MDP with an extra action $\st$ that terminates the process and yields reward~$\alpha$, meaning that we have
\[
A_{\loc} = A \cup \{\st\}
.
\]
But this action space is no longer a two-element set: instead of a single action $\go$ that advances the Markov chain, now each of the MDP's actions $a \in A$ advances it \emph{using action~$a$}, which must be specified.
In spite of this, one can show that \cref{def:gittins_general} continues to make sense, yielding a well-defined Gittins index~$G(s)$ of each state~$s$ among MDPs.
We can then define the Gittins policy as the policy that always plays an action from the MDP of greatest Gittins index $G(s)$, choosing an action, other than $\st$, that is optimal for the $(G(s), s)$-local MDP.
\figureLocalMdpValueOptional

The core issue is that optimality of the Gittins policy relies on the following fact about the local MDP with a Markov chain and any fixed starting state~$s$:
\begin{starquote}
  If the $\go$ action is optimal under some alternative~$\alpha$, then the same $\go$ action is also optimal with any worse alternative $\alpha' < \alpha$.
\end{starquote}
This property fails in general when using an MDP instead of a Markov chain, because the single action $\go$ is replaced by the MDP's action space~$A$.
In particular, the optimal action for the local MDP with alternative~$G(s)$ might be different than the optimal action with lower alternative option.
Intuitively, this is a problem because it means that in full MDP selection, the optimal action to take within one MDP might depend on the states of the other MDPs.
See \cref{rmk:mdps_failure} for details on exactly where the proof of \cref{thm:gittins_optimal} breaks down when generalizing from MCS to MDP selection.

For example, consider the MDP of a box in Pandora's box with optional inspection (\cref{fig:pandora_optional_mdp}) in the closed state~$\boxclosed$.
We show the values in the local MDP for three different initial actions in \cref{fig:local_mdp_value_optional}.
\*[(a)] \label{itm:pandora_optional_stop}
For sufficiently large values of~$\alpha$, as usual, the optimal action is $\st$.
\* \label{itm:pandora_optional_open}
For intermediate values of~$\alpha$, the optimal action is $\op$.
The intuition is that the box has a good chance of being either significantly greater or significantly less than~$\alpha$, so it is worth paying the opening cost to learn the box's value.
\* \label{itm:pandora_optional_take}
For sufficiently small values of~$\alpha$, the optimal action is $\tk$.
The intuition is that the alternative~$\alpha$ is so low that we are very unlikely to prefer it to the box's value, so we are happy taking the box without paying the cost to open it.
\*/
This means that in the context of a broader MDP selection instance with multiple boxes, the optimal $\op$ vs. $\tk$ choice within one box's MDP might depend on the states of the other MDPs.
This can cause prioritizing by Gittins index to be suboptimal: \citet{doval_whether_2018} gives a concrete example.

\subsubsection{Approximate solutions to MDP selection using the Gittins index}
\label{sec:beyond:optional_inspection:progress}

Despite the above challenges, many approximation algorithms have been proposed for Pandora's box with optional inspection \citep{guha_information_2008, beyhaghi_pandoras_2019, beyhaghi_pandoras_2023, fu_pandora_2023, scully_local_2024, chawla_combinatorial_2024}, and similarly for other Pandora's box variants and applications \citep{aouad_pandoras_2020, chawla_combinatorial_2024, beyhaghi_recent_2023, kleinberg_descending_2016, bowers_matching_2024}.
The Gittins index plays a critical role in most of these algorithms.
For example, \citet{fu_pandora_2023} and \citet{beyhaghi_pandoras_2023} show that the optimal policy for the optional inspection setting is a two-phase policy, the second phase of which is to use the Gittins policy; and, while optimally choosing the phase boundary is intractable, they use this insight to construct a polynomial-time approximation scheme for the problem.

There are a few sufficient conditions under which the Gittins policy is known to be \emph{optimal} for MDP selection.
\Citet{doval_whether_2018} identifies one such condition for Pandora's box with optional inspection.
In the general MDP selection setting, \citet{whittle_multi-armed_1980} identifies a condition, now called the \emph{Whittle condition} \citep{glazebrook_sufficient_1982, brown_optimal_2013, hadfield-menell_multitasking_2015} that can be checked separately for each local MDP, with the Gittins policy being optimal if all local MDPs satisfy it.

An MDP satisfies the Whittle condition if, roughly, it can be reduced to a Markov chain with no loss of value in the local MDP.
Specifically, it requires that in every state~$s$ of the MDP, there is a single action~$a$ such that for all alternative values~$\alpha$, either $\st$ or~$a$ is optimal in the $(s, \alpha)$-local MDP.
This precludes the cases shown in \cref{fig:local_mdp_value_optional}, where either $\op$ or $\tk$ can be optimal.
One can show, in Pandora's box with optional inspection, that a box MDP (\cref{fig:pandora_optional_mdp}) satisfies the Whittle condition only in the trivial case where its opening cost is so large that $\op$ is never optimal in the local MDP \citep{doval_whether_2018}.
The Whittle condition is thus relatively limited in scope, though there are some notable classes of MDPs that satisfy it \citep{weiss_branching_1988, glazebrook_stoppable_1979, glazebrook_sufficient_1982}.

However, recent work has revealed fresh promise for the idea of reducing MDPs to Markov chains as a general approach for solving MDP selection: \citet{scully_local_2024} and \citet{chawla_combinatorial_2024} introduce a relaxation of the Whittle condition called \emph{local $\approxRatio$-approximation} and show that many MDPs that fail the Whittle condition admit local approximations.
Roughly speaking, an MDP admits a local $\approxRatio$-approximation if it can be reduced to a Markov chain such that \emph{if the rewards are then scaled by~$\approxRatio$}, there is no loss of value in the local MDP \emph{with any alternative~$\alpha$}, as compared to the original local MDP with the same alternative~$\alpha$ (and without any scaling).
This slightly unusual approximation requirement---which is \emph{not} equivalent to simply achieving a $\approxRatio$-approximation in the local MDP---ensures the following guarantee: in an MDP selection instance where all the MDPs admit (possibly randomized) local $\approxRatio$-approximations, the Gittins policy is a $\approxRatio$-approximation of the optimal policy \citep{scully_local_2024, chawla_combinatorial_2024}. Moreover, this guarantee also holds in the $k$-finish and combinatorial settings described in \cref{sec:examples:finish_multiple} \citep{scully_local_2024, chawla_combinatorial_2024}.
We suspect that local approximation is related to the results of \citet{clarkson_fast_2020}, who prove an approximation guarantee for a special case of MDP selection without explicitly reasoning using the local MDP.

The local approximation approach described above reduces MDPs to Markov chains by attempting to solve the local MDP in a way that is in some sense good for any alternative value~$\alpha$.
A complementary approach to could be to figure out, based on the specific MDP selection instance, what alternative value~$\alpha_i$ is, in some sense, most relevant for each individual MDP~$i$, then take actions within MDP~$i$ that would solve its local MDP with alternative~$\alpha_i$.
\Citet{bowers_prophet_2025} prove a $1/2$-approximation result that is a first step in this direction.
In fact, they obtain their result even under an additional \emph{take it or leave it}    constraint, so it is possible that there are even stronger guarantees waiting to be shown with this approach.

\subsection{Minimizing tail latency in queues}
\label{sec:beyond:tail}

\newcommand{\figTailResults}{%
  \begin{figure}[t]
    \includegraphics{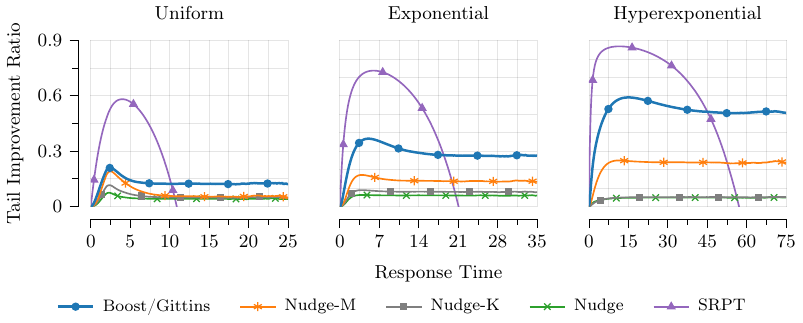}
    \caption{
      Results reproduced from \citet{yu_strongly_2024}: empirical performance (higher is better) of the Gittins policy for minimizing tail probabilities, also called \emph{Boost} (blue) in the legend, against other baseline policies, simulated in three different M/G/1 models with different service time distributions.
      The job model is a continuous-time analogue of the known-size Markov chain from \cref{fig:job_markov_chain_known}.
      The metric shown is \emph{tail improvement ratio} relative to \emph{first-come first-served (FCFS)}, which for policy~$\pi$ and response time threshold~$t$ is $1 - \P{L_\pi > t} / \P{L_{\mathrm{FCFS}} > t}$.
      The probabilities $\P{L_\pi > t}$ are approximated by simulating the policies on a trace of 50 million randomly generated arrivals.
      The Gittins policy is the clear winner over the Nudge family of baselines \citep{grosof_nudge_2021, van_houdt_stochastic_2022, charlet_tail_2024}, with larger improvement when the service time distribution's coefficient of variation is larger.
      \emph{Shortest remaining processing time (SRPT)} (purple) performs better than the Gittins policy for small thresholds~$t$, but SRPT's performance suddenly collapses as $t$ increases.
      This is because although SRPT minimizes mean latency \citep{schrage_proof_1968}, under light-tailed service times, it has the worst possible asymptotic tail probabilities as $t \to \infty$ \citep{nuyens_large-deviations_2006, nuyens_preventing_2008}.
    }
    \label{fig:tail_results}
  \end{figure}
}

As our last example, we revisit the queue scheduling setting from \cref{sec:examples:queue}, where we saw in \cref{def:mg1, thm:gittins_optimal_mg1} that the Gittins policy minimizes \emph{mean latency} in the M/G/1 queue.
However, in many settings, a more relevant objective than minimizing mean latency is minimizing \emph{tail latency}.
Tail latency is a broad term that refers to one of a number of related metrics that capture how likely jobs are to have especially large latency.
Optimizing tail latency is of direct importance to efficiently meeting \emph{service level objectives} in a wide variety of queueing systems in service, computing, healthcare, and other~domains.

The specific metric we focus on minimizing \emph{tail probabilities}, namely the probabilities a job has latency greater than large thresholds~$t$.
That is, if an M/G/1 scheduling policy induces latency distribution $L$, then the tail probabilities are $\P{L > t}$.
We could equivalently work with \emph{tail quantiles}, namely the $(1 - \epsilon)$th quantiles of $L$ for small values of~$\epsilon$.

Given a fixed threshold~$t$, one might hope to use the Gittins index to minimize $\P{L > t}$ by having a job's Markov chain incur cost~$1$ (meaning, yield a reward of~$-1$) once it has been in the system for time~$t$.
Unfortunately, this type of cost structure cannot be encoded as part of standard MCS (\cref{def:mcs}) or its M/G/1 variant.
The issue is that for the job to incur cost~$1$ after spending time~$t$ in the system, one would need to keep track of the job's time in the system so far as part of its state.
But this quantity changes \emph{even when the job is not in service}, whereas in MCS, a Markov chain only advances when its action is played---meaning, here, that a job changes state only when in service.

There is an extension of MCS, called the \emph{restless bandit} problem \citep{whittle_restless_1988}, in which all Markov chains advance each step, with the selected Markov chain advancing according to a different (typically thought of as \emph{better}) transition kernel and reward function than non-selected Markov chains.
In some cases, the Gittins index can be generalized to the restless bandit setting---in which it is called the \emph{Whittle index}.
This approach has been used to for problems similar to minimizing tail probabilities \citep{yu_deadline_2018, anand_whittles_2018}, but tends to obtain theoretical guarantees that are much weaker than optimality \citep{weber_index_1990}.

Nevertheless, there is a limited way in which ordinary MCS and the Gittins index can handle jobs undergoing some sort of change even when not in service: \emph{discounting}.
Suppose, for instance, that we consider job Markov chains with similar transitions to those in \cref{fig:job_markov_chain_known, fig:job_markov_chain_unknown, fig:job_markov_chain_stages}, but instead of incurring cost~$1$ with all transitions, most transitions incur cost~$0$, with only transitions to the terminal state~$\checkmark$ yielding \emph{reward}~$1$.
Then a job completed at time~$t$ would yield reward $\dscnt^t$, where $\dscnt < 1$ is the discount factor.
Notably, this reward is affected by the \emph{global} time~$t$ that advances every time step---no matter which job is served---which is exactly the type of phenomenon that one typically needs restless bandits to capture.

Translating the above discussion to the M/G/1 setting, it suggests that one might be able to use the Gittins index to maximize a metric like $\E{\dscnt^L}$ for $\dscnt < 1$.
\footnote{Specifically, there are issues to do with arrivals, because one cannot capture the $\E{\dscnt^L}$ metric using time-homogeneous arrivals \citep[Section~3.3]{harlev_gittins_2025}. The work of \citet{yu_strongly_2024} and \citet{harlev_gittins_2025}, which we soon discuss, does resolve these issues, but doing so is among their main technical contributions.}
Unfortunately, while this metric does incentivize completing jobs sooner rather than later, it does not capture tail scheduling well: once a job has accrued large latency, it becomes less and less urgent, because one is already guaranteed to receive a very small reward from it.

\figTailResults

However, recent work by \citet{yu_strongly_2024} and \citet{harlev_gittins_2025} shows that with a small tweak, the metric $\E{\dscnt^L}$ becomes a good proxy for tail probabilities:
\begin{starquote}
    Instead of aiming to \emph{maximize} $\E{\dscnt^L}$ with \emph{discount} factor $\dscnt < 1$, one should aim to \emph{minimize} $\E{\dscnt^L}$ with \emph{inflation} factor $\dscnt > 1$.
\end{starquote}
Indeed, when $\dscnt > 1$, the goal of minimizing $\E{\dscnt^L}$ not only incentivizes completing jobs earlier, but also causes jobs to become more urgent the longer they have waited: the cost to be eventually paid upon completion increases exponentially over time!
Specifically, it is known that under certain light-tail assumptions on the service time distribution, minimizing $\E{\dscnt^L}$ for a carefully chosen value of $\dscnt > 1$ results in \emph{asymptotically minimal tail probabilities}---that is, minimizing the asymptotics of $\P{L > t}$ in the $t \to \infty$ limit in a certain precise sense \citep{boxma_tails_2007, wierman_is_2012}.
This idea led to the first policies that achieve better tail probabilities than simple first-come first-served (FCFS) policies for light-tailed service times \citep{grosof_nudge_2021, van_houdt_stochastic_2022, charlet_tail_2024}.
However, it was viewing the problem as MCS with inflation that led to the discovery of the asymptotically optimal policies---which are instances of the Gittins policy---first for known service times \citep{yu_strongly_2024}, then for general job Markov chains \citep{harlev_gittins_2025}.
See \citet[Appendix~D]{harlev_gittins_2025} for a general account of MCS with inflation.

We conclude that, by using inflation instead of discounting, one can use the Gittins policy to asymptotically minimize tail probabilities $\P{L > t}$ as $t \to \infty$.
Moreover, this translates into state-of-the-art empirical performance for practical values of~$t$:
\*[a.] For known service times, \cref{fig:tail_results} shows that the respective Gittins policy makes a substantial improvement over other baselines.
\* For unknown service times, the Gittins policy is the first policy known to improve upon FCFS's tail asymptotics, so there are no other baselines to compare against.
\*/
We refer the interested reader to \citet{yu_strongly_2024} and \citet{harlev_gittins_2025} for further details.

\section{Conclusion}
\label{sec:conclusion}

We have presented the \emph{Gittins index} (\cref{def:gittins_general}), a tool for solving decision-making problems under uncertainty that require choosing among multiple processes to advance.
The Gittins index yields an optimal policy when these processes are independent---specifically, it solves MDPs that can be expressed as an instance of \emph{Markov chain selection} (MCS, \cref{def:mcs})---and many problems fit directly into this framework (\cref{sec:examples}).
The key idea behind the Gittins index definition is to compare a stochastic action to a deterministic alternative in the \emph{local MDP} (\cref{def:local_mdp}), which continues to make sense in problems beyond MCS.
In various cases, the Gittins index continues to yield strong policies in these more difficult problems (\cref{sec:beyond}).
In particular, we highlighted two practical applications where the Gittins index shows particular promise: Bayesian optimization (\cref{sec:beyond:bayesopt}) and scheduling to minimize tail latency (\cref{sec:beyond:tail}).

\subsection{Additional topics}
\label{sec:conclusion:additional}

There are many Gittins index topics that we did not cover.
For example, we focused on one particular way of defining the Gittins index, but there are actually many equivalent definitions, each of which gives different intuition or insight \citep{gittins_multi-armed_2011, katehakis_multi-armed_1987, whittle_multi-armed_1980}.
We also only briefly touched on proofs, limiting ourselves to a dynamic programming argument in \cref{sec:proof}: however, just as the Gittins index enjoys many definitions, it also enjoys many optimality proofs \citep{weber_gittins_1992, tsitsiklis_short_1994, tsitsiklis_lemma_1986, weiss_branching_1988, dumitriu_playing_2003, bertsimas_conservation_1996, dacre_achievable_1999, whittle_multi-armed_1980}.
See \citet{frostig_four_2016} for an overview of four of the main optimality proof approaches and \cref{sec:proof:discussion} for additional discussion.
Other important topics that we omitted or mentioned only briefly include:
\*[1.] \emph{The history of the Gittins index}, for which we refer the reader to \citet{gittins_multi-armed_2011} and \citet{glazebrook_stochastic_2014}.
We also highlight the latter half of \citet{gittins_bandit_1979}, which records discussion about the Gittins index shortly after its discovery.
\* \emph{Efficiently computing the Gittins index}, which is well understood for finite-state Markov chains \citep{balakrishnan_multi-armed_2014, gast_testing_2023, katehakis_multi-armed_1987}, but, as discussed in \cref{sec:general:computation}, remains a challenge for general infinite-state Markov chains \citep{kelly_multi-armed_1981, kim_robust_2016, farias_optimistic_2022, gittins_multi-armed_2011}.
\* \emph{Formulating the Gittins index in continuous time,} which is conceptually similar, but technically more difficult, than the discrete-time setting we focus on here \citep{mandelbaum_continuous_1987, kaspi_multi-armed_1998, karoui_dynamic_1994, bank_gittins_2007}.
\* \emph{Approximate optimality results} in two settings beyond standard MCS: when one has only approximately computed the Gittins index, and when multiple Markov chains must be played in parallel.
See \citet[Sections~4.10 and~5.7]{gittins_multi-armed_2011} for a treatment of the discounted setting and \citet[Chapters~16 and~17]{scully_new_2022} for a treatment of the queueing setting.
\* \emph{Robust variants of the Gittins index} for settings with misspecified transition kernels \citep{moseley_robust_2025, kim_robust_2016, gupta_markovian_2019, caro_robust_2022, cohen_gittins_2022, scully_uniform_2022}.
\* \emph{Other modern work on the Gittins index}, including applying it to auction design \citep{kleinberg_descending_2016}, adapting it to fairness constraints \citep{aminian_markovian_2023}, applying it to analyze games \citep{clarkson_classical_2023}, better understanding its behavior in queues \citep{scully_when_2025, scully_characterizing_2020, aalto_gittins_2009, aalto_properties_2011, aalto_minimizing_2023}, and proving regret bounds for non-Bayesian bandit settings \citep{lattimore_regret_2016, farias_optimistic_2022}.
\* \emph{Restless bandit problems} \citep{whittle_restless_1988}, in which Markov chains can transition even on time steps when they are not played.
In this setting, a generalization of the Gittins index, called the \emph{Whittle index}, yields a good policy under certain conditions \citep{weber_index_1990}, and similar ideas have recently led to policies that achieve even better performance under more general conditions \citep{verloop_asymptotically_2016, hong_achieving_2024, hong_restless_2023, gast_model_2025, avrachenkov_lagrangian_2024}.
See \citet{nino-mora_markovian_2023} for a recent survey.
\*/

\subsection{Open problems}
\label{sec:conclusion:open_problems}

We conclude by listing several classes of open problems in Gittins indices, some of which have been mentioned throughout our exposition.
The first a comprehensive understanding of \emph{numerical computation} beyond finite-state settings, as discussed in \Cref{sec:general:computation}.
Are there general classes of infinite-state Markov chains for which the Gittins index can be efficiently computed, especially if the state space is high-dimensional?
Can one utilize the very small and structured nature of the local MDP's action space to solve the corresponding dynamic program more efficiently than off-the-shelf approximate dynamic programming methods would allow?
An understanding of these questions would allow Gittins indices to be applied in substantially more complicated settings compared to those which are well understood~today.

The second class of open problems is the \emph{analysis beyond optimality} of the Gittins policy.
We mention several results of this type in \cref{sec:conclusion:additional}, but many open problems remain.
One such problem, which is particularly important for Bayesian optimization (\cref{sec:beyond:bayesopt}), is proving \emph{regret bounds} on the Gittins policy.
For finite-horizon bandits, an important initial step in this direction has been taken by \citet{lattimore_regret_2016} and \citet{farias_optimistic_2022}.
However, at present, even for simple regret in Pandora's box, a corresponding analysis has yet to be developed.
An improved understanding of the Gittins policy's regret, and related quantities appropriate for other setups, could help understand in which situations the key definition is the right approach.

In this context, it is worth noting that compared to approximate optimality, exact optimality is a rigid notion---which, necessarily, captures all phenomena occurring in the problem, including those reflected in constant factors rather than rates.
In contrast, approximate optimality arguments tend to work in greater generality, and can therefore provide a complementary understanding by clarifying which phenomena are specific and which are universal.
Such analyses might therefore reveal properties of the Gittins index that complement what is known from its optimality theory.

A third class of open problems involves understanding \emph{metrics beyond mean performance}.
Here, we have illustrated a Gittins index variant that can be used to minimize tail latency in queueing.
More broadly, many decision-making algorithms admit analogues that seek to perform well in terms of quantile regret, or in terms of high-probability bounds.
We therefore expect there to be decision problems which may appear rather different from the classical Gittins index examples, but which nonetheless can be approached fruitfully using the presented toolkit.

Finally, we believe that, given the scope of generality presented in this tutorial---where \cref{def:mcs} allows for \emph{arbitrary} Markov chains---that Gittins-index-based decision-making should be helpful for a broader set of domains that may otherwise appear to have little to do with queueing and economics, where Gittins indices have traditionally been applied.
Domains where Bayesian optimization is popular, such as chemistry and material design, seem particularly promising: here, Gittins-index-based machinery might allow one to work with more-complex experimental pipelines---or with more-flexible probabilistic models defined by, for instance, diffusion models, rather than traditional Gaussian processes.
To achieve this, developing the aforementioned understanding of numerical methods is a key initial step.

\section*{Acknowledgments}

We thank the anonymous referees for their many helpful comments.
Ziv Scully was supported by the NSF under grant no. CMMI-2307008.
Alexander Terenin was supported by Cornell University, jointly via the Center for Data Science for Enterprise and Society, the College of Engineering, and the Ann S. Bowers College of Computing and Information Science.

\bibliographystyle{ACM-ish}
\bibliography{refs, more-refs}

\clearpage

\begin{appendices}

\section{Optimality of the Gittins index policy}
\label{sec:proof}

In this appendix, we prove \cref{thm:gittins_optimal}, namely optimality of the Gittins policy for MCS (\cref{def:mcs}).
The result is a direct corollary of \cref{thm:v_mcs}, the main result of this appendix, which gives an explicit formula for the value function of MCS.
Our proof, given in \cref{sec:proof:the_actual_proof}, consists of the following steps:
\*[1.] We state the specific assumptions needed for our proof.
Some involve dynamic programming (\cref{asm:bellman_unique_solution}), while others are taken to ease presentation (\cref{asm:simplify}).
\* We define the \emph{surrogate value} of a Markov chain (\cref{def:surval}), a random variable that gives a probabilistic interpretation of the local MDP value function (\cref{lem:v_loc_surval}).
\* We define a guess for the MCS value function by appropriately combining the Markov chains' surrogate prices, then show that it solves the MCS Bellman equation (\cref{thm:v_mcs}).
The rough idea is that our MCS value function guess inherits the respective Bellman inequalities of the Markov chains' local MDPs.
\*/
After the proof, we explain how to extend it from MCS to MCS-$k$ (\cref{sec:proof:mcs-k}), then discuss other proofs of the Gittins policy's optimality from the literature (\cref{sec:proof:discussion}).
Our proof is primarily based on that of \citet{whittle_multi-armed_1980} but uses ideas from other proofs, too.

\subsection{Optimality via dynamic programming using surrogate values}
\label{sec:proof:the_actual_proof}

To begin, define the \emph{Bellman operator of action~$i$}, denoted~$\calB_i$, to be an affine operator that acts on all $f : S_i \to \bbR$ for which $\E_{s_i' \sim p_i(s_i)}{f(s_i')}$ is finite for all $s_i$ by
\[
  (\calB_i f)(s_i) &= r_i(s_i) + \E_{s_i' \sim p_i(s_i)}{f(s_i')}
  .
\]
Using \cref{asm:discount}, by standard theory, the local MDP's optimal value function $V^*_{\loc,i} : S_i \times \bbR \to \bbR$ is well defined and solves the Bellman optimality equation
\[
  \label{eq:bellman_local_mdp}
  V_{\loc,i}^*(s_i;\alpha)
  = \max\gp[\big]{\alpha, \calB_i V_{\loc, i}^*(s_i; \alpha)}
  .
\]
The corresponding Bellman optimality equation for MCS is
\[
  \label{eq:bellman_mcs}
  V_\mcs^*(s_1, \dots, s_n)
  = \max_{i \in \{1, \dots, n\}} \calB_i V_\mcs^*(s_1, \dots, s_n)
  .
\]
where the Bellman operator $\calB_i$ is understood as acting on the function $s_i \mapsto V_\mcs^*(s_1, \dots, s_n)$, and similarly throughout for other expressions using the variables $s_1, \dots, s_n$.
We first make the following assumption, to ensure dynamic programming works the way one expects~it~to.

\begin{assumption}
\label{asm:bellman_unique_solution}
The optimal value function $V_\mcs^* : S_1 \times \cdots \times S_n \to \bbR$ is well defined, and is achieved by at least one policy $\pi^* : S_1 \times \cdots \times S_n \to \{1,\dots,n\}$.
Moreover, a given policy is optimal if and only if its value function satisfies Bellman's optimality equation \cref{eq:bellman_mcs}.
\end{assumption}

We expect \cref{asm:bellman_unique_solution} to follow from \cref{asm:discount}, but do not rigorously check this in order to focus our presentation on Gittins-index-theoretic aspects.
Note, however, that \cref{asm:discount} guarantees that $V^*_{\loc,i}(s_i;\alpha)$ is bounded by the sum of $\alpha$ plus the maximum absolute sum of the rewards, both of which are finite: this implies $V^*_{\loc,i}$ is never infinite and hence well defined.
Using this and finiteness of MCS's action space, one can show that the maximum in \cref{eq:bellman_mcs} is achieved, and therefore defines a policy.
By unrolling this policy's respective Bellman equation and applying the fact that \cref{asm:discount} guarantees either discounting or termination in finite time, one can check that the resulting policy achieves a value of $V^*_{\mcs}$, which is always finite.
In the other direction, any policy which does not satisfy Bellman's optimality equation is suboptimal, as it can be improved by replacing its action in some state with one that maximizes \cref{eq:bellman_mcs}.
A final subtlety worth noting is that, in our setting, \cref{eq:bellman_mcs} \emph{need not} admit a unique solution $V^*_\mcs$, and spurious solutions can occur in practical settings---see \citet[Appendix~D]{scully_optimal_2018} for an example.

For ease of exposition, we make two additional assumptions without loss of generality.

\begin{assumption}
  \label{asm:simplify}
  Both of the following hold:
  \*[subenv]
  \* \label{asm:simplify:undiscounted}
  There is no discounting: $\dscnt = 1$.
  \* \label{asm:simplify:no_free_states}
  All Markov chains have no free states, where a Markov chain state is called \emph{free} if it has non-negative reward and zero probability of transitioning directly into a terminal state.
  \*/
\end{assumption}

The intuition behind \cref{asm:simplify:no_free_states} is in the name: \emph{free, as in beer}.
Free states always give a reward, and never cause the decision problem to end, resulting in only gain with no downside.
So, any reasonable algorithm should always play them whenever they occur.
From a Gittins index viewpoint, this manifests in the form of having to deal with extended-valued functions: this is a technical nuisance, so we will assume without loss of generality that none occur.
To avoid casework, we work in the undiscounted setting, also without loss of~generality.

We now briefly argue why \cref{asm:simplify} can be made without loss of generality.
For~\cref{sub@asm:simplify:undiscounted}, one can use the standard trick of replacing discount factor $\dscnt$ by probability-$(1 - \dscnt)$ transitions from all states to a terminal state.
For~\cref{sub@asm:simplify:no_free_states}, in an MCS instance with free states, one can show that an optimal agent always prioritizes playing Markov chains in free states before those in non-free states.
We can thus eliminate free states from each Markov chain, altering the transition kernels and reward functions in light of the fact that the Markov chain will be advanced until it reaches a non-free state.

Provided there are no free states, the Gittins index is always finite, as shown below.
In fact, it is this property that causes the MCS value function to admit a simple form.

\begin{lemma}
  \label{lem:finite_gittins_index}
  For any state non-terminal $s$ of any Markov chain satisfying \cref{asm:discount}, if $s$ is not free, then $G(s) < \infty$.
\end{lemma}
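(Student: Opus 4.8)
The plan is to produce a single value $g \in \bbR$ at which $\st$ is an optimal action in the local MDP with initial state $s$ and alternative $g$; since $\st$ is optimal exactly when $V_\loc^*(s; g) = g$, and $G(s) = \inf\{g \in \bbR : V_\loc^*(s; g) = g\}$ by \cref{eq:gittins_general}, this gives $G(s) < \infty$. Write $V_\loc^\goName(s; g) = r(s) + \E_{s' \sim p(s)}{V_\loc^*(s'; g)}$ for the value of playing $\go$ at least once, so that $V_\loc^*(s; g) = \max(g, V_\loc^\goName(s; g))$; it then suffices to find $g$ with $V_\loc^\goName(s; g) \le g$. Since each $g \mapsto V_\loc^*(s'; g)$ has left and right derivatives in $[0, 1]$ by \cref{lem:v_loc}, the map $g \mapsto V_\loc^\goName(s; g) - g$ is non-increasing, so it is enough to show $\lim_{g \to \infty}(V_\loc^\goName(s; g) - g) < 0$. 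As $\E_{s'\sim p(s)}{V_\loc^*(s'; 0)}$ is finite by \cref{asm:discount}, monotone convergence yields $\lim_{g\to\infty}(V_\loc^\goName(s;g) - g) = r(s) + \E_{s'\sim p(s)}{\ell(s')}$, where for non-terminal $s'$ we put $\ell(s') := \lim_{g\to\infty}(V_\loc^*(s';g) - g)$, which lies in $[0, \infty)$ since this difference is non-increasing in $g$ and at least $0$ (\cref{lem:v_loc}); for a terminal $s'$ we have $V_\loc^*(s';g) = 0$ (the chain has terminated and yields no further reward), so the corresponding limit is $-\infty$.

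The crux is the claim that $\ell(s') = 0$ for every non-terminal state $s'$. Passing to the limit $g \to \infty$ in the local Bellman equation \cref{eq:bellman_local_mdp} gives $\ell(s') = \max(0,\ r(s') + \E_{s'' \sim p(s')}{\tilde\ell(s'')})$, where $\tilde\ell$ equals $\ell$ on non-terminal states and $-\infty$ on terminal ones. Hence $\ell(s') > 0$ forces $p(s')(\partial S) = 0$, which in turn forces $r(s') < 0$ because $s'$ is not free (\cref{asm:simplify:no_free_states}), and then $\ell(s') = r(s') + \E_{s''\sim p(s')}{\ell(s'')}$ holds with equality. Suppose for contradiction that $\ell(s') > 0$ for some $s'$. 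Run the always-$\go$ Markov chain $X_0 = s', X_1, X_2, \dots$ and let $\rho$ be the first time $\ell(X_k) = 0$. By \cref{asm:discount:non_discounted} the chain reaches $\partial S$ in finite time almost surely; since the last state before termination has positive direct-termination probability and therefore satisfies $\ell = 0$, we get $\rho < \infty$ almost surely. For $k < \rho$ the exact identity above holds, making $W_k := \ell(X_k) + \sum_{j<k} r(X_j)$ a martingale up to time $\rho$; a uniform-integrability argument (using that the total absolute reward has finite expectation by \cref{asm:discount:non_discounted} and that $\ell \ge 0$) gives $\ell(s') = W_0 = \E{W_\rho} = \E{\sum_{j<\rho} r(X_j)}$. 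But $\rho \ge 1$ and $r(X_j) < 0$ for all $j < \rho$, so this is strictly negative, contradicting $\ell(s') > 0$.

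Granting the claim, the limit from the first paragraph is $r(s) + q\cdot(-\infty) + (1-q)\cdot 0$ with $q = p(s)(\partial S)$: it is $-\infty$ if $q > 0$, and it equals $r(s) < 0$ if $q = 0$ (since then $s$ not being free forces $r(s) < 0$). Either way it is strictly negative, so $V_\loc^\goName(s;g) < g$ for all sufficiently large $g$, whence $G(s) < \infty$. I expect the main obstacle to be establishing $\ell \equiv 0$ on non-terminal states: this is exactly where the no-free-states reduction of \cref{asm:simplify:no_free_states} does its work --- it guarantees that any state one can pass through without risking termination carries a strictly negative reward --- and turning the martingale step into a fully rigorous optional-stopping argument relies on the integrability supplied by \cref{asm:discount:non_discounted}.
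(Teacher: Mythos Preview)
Your route is genuinely different from the paper's. The paper argues by producing, separately in the cases $q(s)>0$ and $r(s)<0$, a direct upper bound on $V_\loc^\goName(s;\alpha)$ that dips below $\alpha$ for large~$\alpha$; it never introduces the auxiliary function~$\ell$. Your approach instead computes the limit $\lim_{g\to\infty}(V_\loc^\goName(s;g)-g)$ and reduces everything to the single structural fact $\ell\equiv 0$ on non-terminal states, with the case split on $q(s)$ appearing only at the very end. Both routes need \cref{asm:simplify:no_free_states} (you invoke it explicitly; in the paper it is ambient), and your limit computation is arguably cleaner than the paper's clairvoyant bound.

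However, the optional-stopping step you flag as ``the main obstacle'' has a real gap that the integrability in \cref{asm:discount:non_discounted} does \emph{not} close. Write $S_k=\sum_{j<k} r(X_j)$. Since $|S_{\min(k,\rho)}|$ is dominated by the total absolute reward, $S_{\min(k,\rho)}\to S_\rho$ in $L^1$, and the martingale identity $\E{W_{\min(k,\rho)}}=\ell(s')$ then forces
\[
\E[\big]{\ell(X_{\min(k,\rho)})}\;\longrightarrow\;\ell(s')-\E{S_\rho}\;\geq\;\ell(s')+|r(s')|\;>\;0,
\]
whereas $\ell(X_{\min(k,\rho)})\to\ell(X_\rho)=0$ almost surely. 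So under the very hypothesis $\ell(s')>0$ you are trying to contradict, the stopped martingale is \emph{not} uniformly integrable; optional stopping does not give $\E{W_\rho}=\ell(s')$, and Fatou only yields $\E{W_\rho}\le\ell(s')$, which is perfectly compatible with $\E{W_\rho}<0<\ell(s')$. The argument does not produce a contradiction. A way to rescue the claim without the martingale is to read $\ell(s')$ off \cref{eq:v_loc_sup}: since $V_\loc^*(s';g)-g=\sup_\pi\bigl(E_\pi(s')-g(1-P_\pi(s'))\bigr)$, any policy contributing to the limit must have $P_\pi(s')=1$, hence can never play $\go$ at a state with positive termination probability; under \cref{asm:simplify:no_free_states} every state at which it does play $\go$ therefore has strictly negative reward, so $E_\pi(s')\le 0$, and together with $\ell(s')\ge 0$ this gives $\ell(s')=0$.
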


\begin{proof}
  Consider a Markov chain $(S, \partial S, p, r)$, and define the following quantities for all non-terminal states~$s$:
  \* Let $q(s) = \P_{s' \sim p(s)}{s' \in \partial S}$ be the probability of transitioning directly from $s$ to a terminal state.
  \* Recall that $r(s)$ is the immediate reward of~$s$.
  \* Let random variable $R(s)$ be the total reward received on a random trajectory from $s$ to a terminal state.
  \Cref{asm:discount} tells us $\E{R(s)} < \infty$.
  \*/
  Suppose $s$ is not free, meaning either $q(s) > 0$ or $r(s) < 0$, and consider the $(s, \alpha)$-local MDP
  It suffices to show there exists $\alpha \in \bbR$ such that playing $\st$ is strictly better than playing~$\go$.

  There are two cases to consider, depending on the reason $s$ is not free.
  In both cases, we bound $V_\loc^\goName(s; \alpha)$, the optimal value achievable in the local MDP when playing $\go$ at least once.
  Specifically, we show $V_\loc^\goName(s; \alpha) < \alpha$ for sufficiently large $\alpha \in \bbR$ in both cases, which implies the result.

  When $q(s) > 0$, we apply the bound
  \[
    V_\loc^\goName(s; \alpha)
    \leq \E{R(s)} + (1 - q(s)) \alpha
    .
  \]
  This holds because the maximum expected reward obtainable from the Markov chain is $\E{R(s)}$, and when playing $\go$ first, the probability of ever playing $\st$ is at most $1 - q(s)$.
  Because $\E{R(s)} < \infty$ and $1 - q(s) < 1$, we have $V_\loc^\goName(s; \alpha) < \alpha$ for large enough~$\alpha$.

  When $r(s) < 0$, we apply the bound
  \[
    V_\loc^\goName(s; \alpha)
    \leq r(s) + \E{\max(R(s) - r(s), \alpha)}
    .
  \]
  This holds because the right-hand side is the value that would be achievable if after playing~$\go$ once, the agent learned the Markov chain's full trajectory and clairvoyantly chose between the alternative~$\alpha$ and the remaining trajectory reward $R(s) - r(s)$.
  Because $\E{R(s)} < \infty$, we have
  \[
    \lim_{\alpha \to \infty} \gp[\big]{V_\loc^\goName(s; \alpha) - \alpha}
    \leq r(s) + \lim_{\alpha \to \infty} \E{\max(R(s) - \alpha, 0)}
    = r(s)
    .
  \]
  Because $r(s) < 0$, this means $V_\loc^\goName(s; \alpha) < \alpha$ for large enough~$\alpha$.
\end{proof}

We now introduce the machinery needed to state the MCS value function.
At a high level, this value function is a certain combination of the value functions of the Markov chains' local MDPs.
The specific combination is most easily written and understood using a probabilistic interpretation.
Our next step is therefore to express a Markov chain's local MDP value function probabilistically.
Throughout, let $(S, \partial S, p, r)$ be a Markov chain satisfying \cref{asm:discount, asm:simplify}, and note again by \cref{lem:finite_gittins_index} that all Gittins indices in question are~finite.

\begin{definition}
  \label{def:surval}
  The \emph{surrogate value} of non-terminal state $s \in S \setminus \partial S$, denoted $\surval(s)$, is the random variable equal to the minimum Gittins index on the trajectory from $s$ (inclusive) to a terminal state in $\partial S$ (exclusive).
  More formally, letting $s(0), s(1), \dots$ be a trajectory of the Markov chain with $s(0) = s$, and letting $\tau$ be the hitting time of $\partial S$, we have
  \[
    \label{eq:surval_def}
    \surval(s) = \min_{0 \leq u < \tau} G(s(u))
    .
  \]
\end{definition}

From \cref{eq:surval_def}, it is immediate that a state's surrogate value is at most its Gittins index:
\[
  \label{eq:surval_gittins}
  \P{\surval(s) \leq G(s)} = 1
  .
\]
The following lemma strengthens this by relating the full distribution function of $\surval(s)$ to the local MDP.
In what follows, we use the term \emph{almost all} in the Lebesgue sense.

\begin{lemma}
  \label{lem:v_loc_surval}
  The surrogate value and local MDP's value function satisfy
  \[
    \label{eq:surval_cdf}
    \P{\surval(s) \leq \alpha}
    &= \frac{\d}{\d{\alpha}} V_\loc^*(s; \alpha)
    &
    & \textenv{for almost all } \alpha \in \bbR
    \\
    \label{eq:surval_emax}
    \E{\max(\surval(s), \alpha)}
    &= V_\loc^*(s; \alpha)
    &
    & \textenv{for all } \alpha \in \bbR
    .
  \]
\end{lemma}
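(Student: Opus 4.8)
The plan is to prove the pointwise (almost-everywhere) derivative identity \cref{eq:surval_cdf} first, and then deduce \cref{eq:surval_emax} from it by integration. These two displays are two facets of one fact: as a function of $\alpha$, the map $\alpha \mapsto V_\loc^*(s;\alpha)$ is convex (\cref{lem:v_loc}) and equals $\alpha$ for all $\alpha \ge G(s)$ (since $\st$ is optimal there), hence is pinned down by its derivative together with this boundary behavior; and $\alpha \mapsto \E{\max(\surval(s),\alpha)}$ is the analogous convex function attached to the random variable $\surval(s)$, which by \cref{eq:surval_gittins} also equals $\alpha$ once $\alpha \ge G(s)$. So it suffices to match derivatives on $(-\infty, G(s))$.

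For \cref{eq:surval_cdf} I would argue on each side. On the right, \cref{eq:v_loc_sup} writes $\alpha \mapsto V_\loc^*(s;\alpha)$ as a supremum of the affine maps $\alpha \mapsto E_\pi(s) + \alpha P_\pi(s)$, so the envelope theorem used in the proof of \cref{lem:v_loc} gives, at every $\alpha$ where this (convex, hence a.e.-differentiable) function is differentiable, $\frac{\d}{\d{\alpha}} V_\loc^*(s;\alpha) = P_{\pi^*_\alpha}(s)$, the probability that an optimal local-MDP policy eventually plays $\st$. The optimal policy is explicit: $\st$ is optimal at a state $s''$ iff $V_\loc^*(s'';\alpha) = \alpha$ iff $\alpha \ge G(s'')$, and by the two monotonicity observations preceding \cref{lem:v_loc}, $\go$ is optimal at $s''$ iff $\alpha \le G(s'')$. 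Thus the ``Gittins-threshold'' stopping rule --- play $\go$ while the current state's Gittins index exceeds $\alpha$, and play $\st$ the first time it does not --- is optimal; and when $\alpha$ is not equal to $G(s(u))$ for any $u < \tau$ along the realized trajectory, it plays $\st$ exactly when that trajectory first reaches a state of Gittins index below $\alpha$, and never plays $\st$ if the chain terminates first. Since the chain hits a terminal state in finite time almost surely (\cref{asm:discount} together with $\dscnt = 1$ from \cref{asm:simplify}), this gives $P_{\pi^*_\alpha}(s) = \P{\min_{0 \le u < \tau} G(s(u)) < \alpha} = \P{\surval(s) < \alpha}$, and a Fubini argument (a countable trajectory has Lebesgue-null image under $G$) shows that for almost every $\alpha$ the trajectory a.s. avoids the critical values, so $\P{\surval(s) < \alpha} = \P{\surval(s) \le \alpha}$ there. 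On the left, writing $\max(\surval(s),\alpha) = \alpha + (\surval(s) - \alpha)^+$ and differentiating under the expectation --- legitimate by dominated convergence, using $\surval(s) \le G(s)$ from \cref{eq:surval_gittins} --- yields $\frac{\d}{\d{\alpha}} \E{\max(\surval(s),\alpha)} = \P{\surval(s) \le \alpha}$ for almost every $\alpha$. Comparing the two sides gives \cref{eq:surval_cdf}.

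Finally, \cref{eq:surval_emax} follows by integration: both $\alpha \mapsto V_\loc^*(s;\alpha)$ and $\alpha \mapsto \E{\max(\surval(s),\alpha)}$ are real-valued convex functions on $\bbR$, hence locally Lipschitz and absolutely continuous, they have the same derivative almost everywhere by \cref{eq:surval_cdf}, and they take the same value at any fixed $\alpha_0 \ge G(s)$ (both equal $\alpha_0$, using \cref{eq:surval_gittins} on the left and optimality of $\st$ on the right); therefore they coincide for every $\alpha$.

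I expect the main obstacle to be the middle step --- pinning down the derivative of $V_\loc^*$ probabilistically. The conceptual picture (optimal play is the Gittins-threshold stopping rule, and it stops before termination precisely when the running minimum of Gittins indices dips below $\alpha$) is clean, but making it rigorous requires care about which optimal policy the envelope theorem's slope refers to when there are ties, and about the almost-everywhere bookkeeping over the critical values $\alpha = G(s(u))$, especially when the state space is uncountable (so that these values need not be countably many). An alternative that sidesteps the envelope theorem would be to verify directly that $\alpha \mapsto \E{\max(\surval(s),\alpha)}$ solves the local MDP's Bellman equation \cref{eq:bellman_local_mdp}, using the recursion $\surval(s) = \min(G(s), \surval(s'))$ with $s' \sim p(s)$ (and the convention $\surval \equiv +\infty$ at terminal states) together with the defining identity \cref{eq:gittins_root}, $G(s) = r(s) + \E_{s' \sim p(s)}{V_\loc^*(s'; G(s))}$; but closing that argument requires an induction on the hitting time or a fixed-point uniqueness argument, so I would prefer the envelope route.
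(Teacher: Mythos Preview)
Your proposal is correct and follows essentially the same route as the paper: identify $\frac{\d}{\d\alpha} V_\loc^*(s;\alpha)$ via the envelope theorem as the probability an optimal local-MDP policy plays $\st$, recognize that the Gittins-threshold stopping rule is optimal so this probability is $\P{\surval(s)\le\alpha}$, and then integrate using the boundary agreement at $\alpha\ge G(s)$ to get \cref{eq:surval_emax}. You are more explicit than the paper about the strict-versus-nonstrict inequality and the null set of critical values, but the core argument is the same.
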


\begin{proof}
  We first show \cref{eq:surval_cdf}.
  By the reasoning in the proof of \cref{lem:v_loc}, $\frac{\d}{\d{\alpha}} V_\loc^*(s; \alpha)$, which exists for almost all~$\alpha$ by convexity, is the probability that an optimal policy for the local MDP ever plays~$\st$.
  \footnote{
    One can say more: if the derivative fails to exist, it is because there are multiple optimal policies with different probabilities of playing~$\st$.
    But left and right derivatives still exist in this case, and \citet[Appendix~B.6]{xie_cost-aware_2024} show that they are the minimum and maximum probabilities of playing~$\st$ among optimal~policies.
  }
  But we know that an optimal policy for the local MDP is to play~$\st$ if and only if the current state $s'$ has $G(s') \leq \alpha$.
  The probability this policy ever plays~$\st$ is the probability that some state~$s'$ on the trajectory from $s$ to a terminal state has $G(s') \leq \alpha$.
  By~\cref{eq:surval_def}, this happens if and only if $\surval(s) \leq \alpha$, so the probability is $\P{\surval(s) \leq \alpha}$, as desired.

  Having shown \cref{eq:surval_cdf}, we move on to showing \cref{eq:surval_emax}.
  Recall the definition of $G(s)$ in \cref{eq:gittins_general}:
  \[
    \label{eq:gittins_general_apdx}
    G(s)
    = \sup\curlgp{g \in \bbR : V_\loc^*(s; g) > g}
    = \inf\curlgp{g \in \bbR : V_\loc^*(s; g) = g}
    .
  \]
  Recall also from \cref{lem:finite_gittins_index} that $G(s) < \infty$.
  Combining \cref{eq:surval_cdf, eq:gittins_general_apdx} shows that for all $\alpha \geq G(s)$, we have
  \[
    \E{\max(\surval(s), \alpha)}
    = \alpha
    = V_\loc^*(s; \alpha)
  \]
  so \cref{eq:surval_emax} holds for $\alpha \geq G(s)$.
  By the tail integral formula, for almost all~$\alpha$, we get
  \[
    \frac{\d}{\d{\alpha}} \E{\max(\surval(s), \alpha)} = \P{\surval(s) \leq \alpha}
    .
  \]
  Combining this with \cref{eq:surval_cdf} implies \cref{eq:surval_emax} holds for $\alpha < G(s)$, too.
\end{proof}

We are now ready to prove \cref{thm:gittins_optimal}.
We consider an undiscounted MCS instance with $n$ Markov chains $(S_i, \partial S_i, p_i, r_i)$, all satisfying \cref{asm:discount,asm:bellman_unique_solution,asm:simplify}.
As previously discussed, our approach will be to guess the form of the MCS value function, then show that it satisfies the Bellman equation.

Let $V_\mcs : S_1 \times \dots \times S_n \to \bbR$ be the value function of MCS of this MCS instance.
Because MCS terminates once any of its constituent Markov chains terminates, $V_\mcs^*$ satisfies the terminal condition
\[
  \label{eq:v_mcs_boundary}
  V_\mcs^*(s_1, \dots, s_n) &= 0
  &
  & \text{if } (s_1, \dots, s_n) \in \partial S_\mcs
\]
where the latter is equivalent to $s_i \in \partial S_i$ for some $i$.

To solve the MCS Bellman equation \cref{eq:bellman_mcs}, we need to consider what we know about the operators~$\calB_i$.
The key piece of information is that $\calB_i$ features in the Bellman equation for the local MDP of Markov chain~$i$, as spelled out in \cref{lem:bellman_loc} below.
This suggests that if we define $V_\mcs^*$ by combining the local MDP value functions~$V_{\loc, i}^*$ in a suitable manner, we can control how $V_\mcs^*$ interacts with the Bellman operators~$\calB_i$.
We do this in \cref{thm:v_mcs} below, which relies crucially on the surrogate value interpretation of $V_{\loc, i}^*$ (\cref{lem:v_loc_surval}).

\begin{lemma}
  \label{lem:bellman_loc}
  For all $s_i \in S_i \setminus \partial S_i$ and all $\alpha \in \bbR$, we have
  \[
    V_{\loc, i}^*(s_i; \alpha)
    = \max\gp[\big]{\alpha, \calB_i V_{\loc, i}^*(s_i; \alpha)}
    = \begin{cases}
      \alpha & \text{if } \alpha \geq G_i(s_i) \\
      \calB_i V_{\loc, i}^*(s_i; \alpha) & \text{if } \alpha \leq G_i(s_i)
    \end{cases}
  \]
  with equality between the two branches, namely $\alpha = \calB_i V_{\loc, i}^*(s_i; \alpha)$ if and only if $\alpha = G_i(s_i)$.
\end{lemma}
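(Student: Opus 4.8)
The plan is to obtain the first equality directly from dynamic programming, and then read off which of its two terms is attained by combining the convexity and monotonicity properties collected in \cref{lem:v_loc} with the definition of the Gittins index. The identity $V_{\loc,i}^*(s_i;\alpha) = \max\bigl(\alpha,\, \calB_i V_{\loc,i}^*(s_i;\alpha)\bigr)$ is simply the local MDP's Bellman optimality equation \cref{eq:bellman_local_mdp}: in the $(s_i,\alpha)$-local MDP the action $\st$ has value $\alpha$, while the action $\go$ has value $r_i(s_i) + \E_{s_i' \sim p_i(s_i)}{V_{\loc,i}^*(s_i';\alpha)} = \calB_i V_{\loc,i}^*(s_i;\alpha)$, which is also $V_{\loc,i}^{\goName}(s_i;\alpha)$, the best value attainable while playing $\go$ at least once. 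So the only thing left is to locate the crossover between the two terms in the maximum.

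To do this, set $\varphi(\alpha) = V_{\loc,i}^*(s_i;\alpha) - \alpha$. By \cref{lem:v_loc}, $\varphi$ is convex, nonnegative, and non-increasing (its one-sided derivatives lie in $[-1,0]$), and hence continuous on $\bbR$. Because \cref{asm:simplify:no_free_states} removes free states, \cref{lem:finite_gittins_index} gives $G_i(s_i) < \infty$, and the second description in \cref{eq:gittins_general} identifies $G_i(s_i)$ with $\inf\{g : \varphi(g) = 0\}$; since $\varphi \ge 0$ is continuous and non-increasing, its zero set is exactly $[G_i(s_i),\infty)$. Consequently, for $\alpha \ge G_i(s_i)$ we have $V_{\loc,i}^*(s_i;\alpha) = \alpha$, which via the Bellman equation forces $\calB_i V_{\loc,i}^*(s_i;\alpha) \le \alpha$ and selects the first branch; and for $\alpha < G_i(s_i)$ we have $V_{\loc,i}^*(s_i;\alpha) > \alpha$, which forces $\calB_i V_{\loc,i}^*(s_i;\alpha) = V_{\loc,i}^*(s_i;\alpha) > \alpha$ and selects the second branch, strictly.

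It remains to handle the last clause. The two branches agree at $\alpha = G_i(s_i)$ because $\calB_i V_{\loc,i}^*(s_i;\cdot) = V_{\loc,i}^{\goName}(s_i;\cdot)$, so the root-finding identity \cref{eq:gittins_root} reads $\calB_i V_{\loc,i}^*(s_i;G_i(s_i)) = G_i(s_i)$ and both sides then equal $\alpha$. For the converse, the case $\alpha < G_i(s_i)$ is excluded by the strict inequality above, so what remains---and the step I expect to be the main obstacle---is to show $\calB_i V_{\loc,i}^*(s_i;\alpha) < \alpha$ strictly whenever $\alpha > G_i(s_i)$. My plan here is to set $g(\alpha) = \alpha - \calB_i V_{\loc,i}^*(s_i;\alpha)$, note it is convex with one-sided derivatives in $[0,1]$ (differentiating under the expectation, justified by the uniform $1$-Lipschitz bound of \cref{lem:v_loc}) and hence non-decreasing, with $g(G_i(s_i)) = 0$ and, by the first branch, $g \ge 0$ for $\alpha \ge G_i(s_i)$; it then suffices to rule out $g$ vanishing on a nondegenerate interval $[G_i(s_i),\alpha_1]$. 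On such an interval $\alpha \mapsto \calB_i V_{\loc,i}^*(s_i;\alpha)$ would have slope $1$, and identifying this slope through \cref{lem:v_loc_surval} and \cref{eq:surval_cdf} with the probability that the minimum Gittins index along the post-transition trajectory is at most $\alpha$, and combining with $\calB_i V_{\loc,i}^*(s_i;G_i(s_i)) = G_i(s_i)$, should force $r_i(s_i) = 0$ together with a vanishing probability of $s_i$ transitioning directly into $\partial S_i$---that is, it should force $s_i$ to be free, contradicting \cref{asm:simplify:no_free_states}. Everything else is routine bookkeeping built on \cref{lem:v_loc}, \cref{lem:finite_gittins_index}, \cref{eq:gittins_general}, and \cref{eq:gittins_root}; it is this strictness step, where one must marry the envelope-theorem reading of the $\go$-branch's slope to the no-free-states reduction and carefully track how terminal successor states enter $\calB_i$, that will need the most care.
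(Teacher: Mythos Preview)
Your argument is correct, but you are working far harder than the paper does. The paper's entire proof is two sentences: the first equality is the local Bellman equation~\cref{eq:bellman_local_mdp}, and the rest is read off directly from \cref{def:gittins_general}, which \emph{defines} $G_i(s_i)$ as the \emph{unique} alternative at which both $\st$ and $\go$ are optimal. Given that uniqueness is already baked into the definition (and was justified in the discussion preceding it), the ``if and only if'' clause you call ``the main obstacle'' is immediate: if both branches agree at some~$\alpha$, then both $\st$ and $\go$ are optimal there, hence $\alpha = G_i(s_i)$ by uniqueness; conversely, both are optimal at $G_i(s_i)$ by definition.

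What you have done instead is re-prove that uniqueness from first principles, routing through \cref{lem:v_loc_surval} and \cref{asm:simplify:no_free_states} to rule out a flat segment of $\alpha \mapsto \alpha - \calB_i V_{\loc,i}^*(s_i;\alpha)$ above $G_i(s_i)$. This is a legitimate exercise and your sketch is essentially sound, but it reconstructs something the paper takes as given. Your route does buy one thing: it makes explicit that the uniqueness claim genuinely relies on ruling out free states (convexity alone does not exclude $g$ vanishing on an interval before increasing). If you want to keep your argument, that is the payoff to highlight; otherwise, simply citing \cref{def:gittins_general} after establishing the first equality matches the paper and is all that is required.
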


\begin{proof}
  The first equality is the Bellman equation of the local MDP for Markov chain~$i$ with alternative~$\alpha$, and the rest follows from \cref{def:gittins_general}.
  Specifically, playing~$\st$, which yields value~$\alpha$, is optimal if and only if $\alpha \geq G_i(s_i)$; and playing~$\go$, which yields expected value $\calB_i V_{\loc, i}^*(s_i; \alpha)$, is optimal if and only if $\alpha \leq G_i(s_i)$.
\end{proof}

\begin{theorem}
  \label{thm:v_mcs}
  The MCS optimal value function is
  \[
    \label{eq:v_mcs_guess}
    V_\mcs^*(s_1, \dots, s_n)
    = \begin{dcases}
        \E[\bigg]{\max_{i \in \{1, \dots, n\}} \surval_i(s_i)} & \text{if } s_i \in S_i \setminus \partial S_i \text{ for all } i
        \\
        0 & \text{otherwise.}
    \end{dcases}
  \]
  Moreover, the maximizing actions in the Bellman equation \cref{eq:bellman_mcs} are those with maximal Gittins index:
  \[
    \argmax_{i \in \{1, \dots, n\}} \calB_i V_\mcs^*(s_1, \dots, s_n)
    = \argmax_{i \in \{1, \dots, n\}} G_i(s_i)
  \]
  and the value $V^*_\mcs$ is achieved by the Gittins policy.
\end{theorem}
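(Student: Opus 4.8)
The plan is to guess that the function $V$ defined by the right-hand side of \cref{eq:v_mcs_guess} is the MCS value function, verify it solves the Bellman optimality equation \cref{eq:bellman_mcs} with the stated maximizers, and then read off the theorem from \cref{asm:bellman_unique_solution}. First I would note that $V$ is well defined: on non-terminal states $\surval_i(s_i) \le G_i(s_i) < \infty$ by \cref{lem:finite_gittins_index} (which uses \cref{asm:simplify:no_free_states}), while a monotone-convergence argument applied to \cref{eq:surval_emax} keeps $\E{\surval_i(s_i)}$ from being $-\infty$; and $V$ meets the terminal condition \cref{eq:v_mcs_boundary} by construction.

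The crux is a local-to-global reduction. Fix a non-terminal state $(s_1, \dots, s_n)$ and an action $i$, and set $Z = \max_{j \ne i} \surval_j(s_j)$, which depends only on the chains $j \ne i$ and is therefore independent of chain~$i$. Since $\max_j \surval_j(s_j) = \max(\surval_i(s_i), Z)$, conditioning on $Z$ and then applying \cref{eq:surval_emax} to chain~$i$ gives $V(s_1, \dots, s_n) = \E{V_{\loc, i}^*(s_i; Z)}$. The same identity applies at each successor $s_i' \sim p_i(s_i)$: when $s_i'$ is non-terminal it gives $V(s_1, \dots, s_i', \dots, s_n) = \E{V_{\loc, i}^*(s_i'; Z)}$, and when $s_i' \in \partial S_i$ both sides are $0$, since a terminal state of $S_i$ is absorbing with zero continuation value in the local MDP just as in MCS. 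Feeding this into the definition of $\calB_i$ and swapping the (independent) expectations over $s_i'$ and $Z$ yields the key identity $\calB_i V(s_1, \dots, s_n) = \E{\calB_i V_{\loc, i}^*(s_i; Z)}$, where on the right $\calB_i$ acts on the local MDP value exactly as in \cref{lem:bellman_loc}.

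Now \cref{lem:bellman_loc} states $\calB_i V_{\loc,i}^*(s_i; \alpha) \le V_{\loc,i}^*(s_i; \alpha)$ for every real $\alpha$, with equality precisely when $\alpha \le G_i(s_i)$. Taking expectations over $Z$ gives the Bellman inequality $\calB_i V(s_1, \dots, s_n) \le V(s_1, \dots, s_n)$ for every action $i$, with equality if and only if $\P{Z \le G_i(s_i)} = 1$. Since $\surval_j(s_j) \le G_j(s_j)$ always (\cref{eq:surval_gittins}), equality holds whenever $i \in \argmax_j G_j(s_j)$; conversely, if $i \notin \argmax_j G_j(s_j)$, pick $j^* \in \argmax_j G_j(s_j)$ (so $j^* \ne i$ and $G_{j^*}(s_{j^*}) > G_i(s_i)$), and a short argument using \cref{eq:surval_cdf}, strict optimality of $\go$ below the Gittins index, and the finite identity $V_{\loc, j^*}^*(s_{j^*}; G_{j^*}(s_{j^*})) = G_{j^*}(s_{j^*})$ shows $\P{\surval_{j^*}(s_{j^*}) > G_i(s_i)} > 0$, so the inequality is strict. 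Hence $\argmax_i \calB_i V = \argmax_i G_i(s_i)$ and $V$ solves \cref{eq:bellman_mcs}. The Gittins policy always plays an action in $\argmax_i G_i(s_i)$, hence one attaining equality in \cref{eq:bellman_mcs} with the right-hand value~$V$, so its value function satisfies the same linear recursion as $V$ together with boundary data \cref{eq:v_mcs_boundary}, which under \cref{asm:discount,asm:simplify} has a unique solution (unroll to the almost-surely finite termination time). Thus the Gittins policy's value equals $V$, and since $V$ solves \cref{eq:bellman_mcs}, \cref{asm:bellman_unique_solution} makes the Gittins policy optimal; this gives $V_\mcs^* = V$, proving \cref{eq:v_mcs_guess}, and \cref{asm:bellman_unique_solution} run in the other direction shows every optimal policy must at each reachable state play from $\argmax_i \calB_i V_\mcs^* = \argmax_i G_i(s_i)$, which is \cref{thm:gittins_optimal}.

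I expect the delicate step to be the commuting identity $\calB_i V = \E{\calB_i V_{\loc,i}^*(s_i; Z)}$: it hinges on aligning the reward-and-termination structure of MCS with that of the local MDP — especially the bookkeeping at transitions into terminal states, which is exactly why one first reduces to the undiscounted, free-state-free setting of \cref{asm:simplify} — and it is here that the surrogate-value representation of \cref{lem:v_loc_surval} earns its keep, turning the combinatorial ``maximum over chains'' into an expectation through which the affine operator $\calB_i$ passes cleanly. The strictness half of the $\argmax$ characterization, needed only for the ``only if'' direction of \cref{thm:gittins_optimal}, is a secondary place where finiteness of the Gittins index (hence \cref{asm:simplify:no_free_states}) is essential.
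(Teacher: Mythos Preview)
Your proposal is correct and follows essentially the same route as the paper: rewrite the guess as $\E{V_{\loc,i}^*(s_i; Z)}$ with $Z = \max_{j \ne i} \surval_j(s_j)$ via \cref{lem:v_loc_surval}, push $\calB_i$ through by Fubini, apply \cref{lem:bellman_loc} to get the Bellman inequality with equality iff $\P{Z \le G_i(s_i)} = 1$, characterize that event via \cref{eq:surval_gittins} and \cref{eq:surval_cdf}, and finally unroll the recursion to termination to show the Gittins policy attains the value. The paper's presentation differs only cosmetically (it writes $\surval_{\neq i}$ for your $Z$ and invokes dominated convergence explicitly in the unrolling step).
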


\begin{proof}
  For the purposes of this proof, let us take $V_\mcs^*$ to be the function \emph{defined} by the expression \cref{eq:v_mcs_guess}, which we emphasize is not assumed to be the optimal value.
  By \cref{asm:bellman_unique_solution}, if we can show that this ansatz satisfies the Bellman equation \cref{eq:bellman_mcs} and is the value of some policy $\pi^*$, then $V_\mcs^*$ is indeed the true optimal value~function.

  Suppose $s_i \in S_i \setminus \partial S_i$ for all~$i$.
  It suffices to show that for all~$i$,
  \[
    \calB_i V_\mcs^* (s_1, \dots, s_n) \leq V_\mcs^*(s_1, \dots, s_n)
  \]
  with equality if and only if Markov chain~$i$ has maximal Gittins index, meaning $G_i(s_i) \geq G_j(s_j)$ for all $j \neq i$.

  Below, to reduce clutter, we shorten $G_i(s_i)$ to~$G_i$ and shorten $\surval_i(s_i)$ to~$\surval_i$.
  Recall throughout that surrogate values of different Markov chains are mutually independent.
  Let
  \[
    \label{eq:surval_others}
    \surval_{\neq i}
    = \max_{j \neq i} \surval_j
    .
  \]
  We first check that $\calB_i V_\mcs^*$ is well defined: this follows by the fact that rewards are bounded in absolute value (\cref{asm:discount}).
  Next, using \cref{lem:v_loc_surval}, we can write the proposed MCS value function in terms of $\surval_{\neq i}$:
  \[
    \label{eq:v_mcs_local_perspective}
    V_\mcs^*(s_1, \dots, s_n)
    = \E{\max(\surval_i, \surval_{\neq i})}
    = \E{V_{\loc, i}^*(s_i, \surval_{\neq i})}
    .
  \]
  One intuition is that \cref{eq:v_mcs_local_perspective} shows the perspective of $i$ on the MCS instance, where the value of playing any action other than~$i$ has been summarized by the random variable~$\surval_{\neq i}$.
  Applying the (affine) Bellman operator~$\calB_i$ and using \cref{lem:bellman_loc} yields
  \[
    \calB_i V_\mcs^*(s_1, \dots, s_n)
    & \label{eq:v_mcs:b_e}
    = \calB_i \E{V_{\loc, i}^*(s_i; \surval_{\neq i})}
    \\ & \label{eq:v_mcs:e_b}
    = \E{\calB_i V_{\loc, i}^*(s_i; \surval_{\neq i})}
    \\ &
    \leq \E{V_{\loc, i}^*(s_i; \surval_{\neq i})}
    \\ & \label{eq:v_mcs:bellman_ineq_conclusion}
    = V_\mcs^*(s_1, \dots, s_n)
  \]
  with equality if and only if $\P{\surval_{\neq i} \leq G_i} = 1$.
  Here, changing the order of expectations when going from \cref{eq:v_mcs:b_e} to \cref{eq:v_mcs:e_b} follows by Fubini's Theorem, using absolute integrability of the reward sum (\cref{asm:discount}).
  One subtlety is that while we have assumed $s_i$ is non-terminal, it might be that the next state $s_i' \sim p_i(s_i)$, which is used implicitly when we apply the Bellman operator, is terminal.
  To handle this, one can check that the left- and right-hand expressions in \cref{eq:v_mcs_local_perspective} are still equal, namely both~$0$, when $s_i$ is terminal.

  It remains only to show that $\P{\surval_{\neq i} \leq G_i} = 1$ if and only if $G_i \geq G_j$ for all $j \neq i$.
  The \emph{if} direction follows from \cref{eq:surval_gittins, eq:surval_others}, which together imply
  \[
    \surval_{\neq i} = \max_{j \neq i} \surval_j \leq \max_{j \neq i} G_j \leq G_i
  \]
  with probability~$1$.
  For the \emph{only if} direction, because $\surval_{\neq i} \geq \surval_j$ for all~$j$, it suffices to show that if $G_i < G_j$ for some~$j$, then $\P{\surval_j \leq G_i} < 1$.
  \Cref{lem:v_loc, lem:v_loc_surval} together imply that for all $\alpha \in \bbR$, the following three expressions are equivalent:
  \[
    \P{\surval_j \leq \alpha} &< 1
    &
    \frac{\d}{\d{\alpha}} V_\loc^*(s_j; \alpha) &< 1
    &
    \alpha &< G_j
    .
  \]
  The desired statement follows by plugging in $\alpha = G_i$.

  Finally, we argue that the value achieved by the Gittins policy is indeed given by~$V^*_\mcs$: in some sense, this ensures that $V^*_\mcs$ is not a spurious solution to the Bellman equation.
  Denote the Gittins policy, under an arbitrary tie-breaking rule, by $\pi^* : S_1 \times \dots \times S_n \to \{1,\dots,n\}$, where, as with the notation used before, we do not assume optimality.
  Let $s(0), \dots, s(\tau)$, without subscripts, be the trajectory of the MCS state vector under~$\pi$, where $s(0) = s = (s_1, \dots, s_n)$ is the initial state, and~$\tau$ is the time when MCS terminates, namely the hitting time of $\partial S_\mcs$.
  By the preceding argument, we know that
  \[
  V^*_\mcs(s) = \max_{i \in \{1, \dots, n\}} \calB_i V^*_\mcs(s) = \calB_{\pi^*(s)} V^*_\mcs(s)
  .
  \]
  For every finite $T \geq 1$, iterating the above expression and using the fact that $V^*_\mcs(s(\tau)) = 0$ by \cref{eq:v_mcs_boundary} gives
  \[
  V^*_\mcs(s) = \E[\Bigg]{\smashoperator[r]{\sum_{t=0}^{\min(T, \tau) - 1}} r_{\pi^*(s(t))}(s_{\pi^*(s(t))}) + V^*_\mcs(s(\min(T, \tau)))}
  .
  \]
  Taking the $T \to \infty$ limit and applying dominated convergence via \cref{asm:discount} yields
  \[
  V^*_\mcs(s) = \E[\Bigg]{\sum_{t=0}^{\tau - 1} r_{\pi^*(s(t))}(s_{\pi^*(s(t))})}
  .
  \]
  The right-hand side is exactly the value achieved by the Gittins policy, as desired.
\end{proof}

\begin{remark}
  \label{rmk:mdps_failure}
  As discussed in \cref{sec:beyond:optional_inspection}, one cannot in general extend \cref{thm:gittins_optimal, thm:v_mcs} from MCS to MDP selection.
  However, part of the argument still goes through:
  \*[subenv] On one hand, the \emph{Bellman inequality}, namely \crefrange{eq:v_mcs:b_e}{eq:v_mcs:bellman_ineq_conclusion}, continues to hold.
  This implies the value function guess in \cref{eq:v_mcs_guess} is an upper bound for the MDP selection value function, a fact which plays a crucial role in many approximation results for MDP selection \citep{brown_optimal_2013, scully_local_2024, chawla_combinatorial_2024, aouad_pandoras_2020, beyhaghi_recent_2023, bowers_prophet_2025}.
  \* On the other hand, the \emph{Bellman equation} generally fails: there need not exist an action that makes the Bellman inequality tight.
  The issue is that in \cref{eq:v_mcs:e_b}, the random realization of $\surval_{\neq i}$ might influence which action is optimal, but we have to choose an action without knowledge of~$\surval_{\neq i}$---recall that, here, Bellman operators are now parameterized by a pair $(i, a)$, where $a$ is an action in MDP~$i$.
  One of the few cases when the Bellman equation holds despite this obstacle is when all the MDPs satisfy the Whittle condition (see \cref{sec:beyond:optional_inspection:progress}).
  \*/
\end{remark}

\subsection{Generalization: finishing multiple Markov chains}
\label{sec:proof:mcs-k}

We now sketch how the statement and proof of \cref{thm:v_mcs} can be generalized from MCS to MCS-$k$ (\cref{def:mcs-k}).
The overall approach we take is the similar to that of \citet{scully_local_2024}, although that work considers just the Pandora's box setting and its optional inspection variant.
See \citet{singla_price_2018} and \citet{gupta_markovian_2019} for alternative proofs.
One can extend the argument below to variants of MCS-$k$ involving combinatorial constraints, but we treat only ordinary MCS-$k$ for ease of presentation.

The main difference here is that instead of the MCS value function involving the maximum single surrogate value, the argument involves the \emph{sum of the $k$ greatest surrogate values}.
Specifically, when there are $k$ items still to be selected, we have
\[
  \label{eq:v_mcs-k}
  V_\mcsk^*(s_1, \dots, s_n)
  = \E*{\max_{\substack{I \subseteq \{1, \dots, n\}\\|I| = k}} \sum_{i \in I} \surval_i(s_i)}
  .
\]
More generally, when exactly $\ell \leq k$ of the Markov chains are in terminal states, the same formula holds, except we use only the $k - \ell$ greatest surrogate values, meaning we replace $|I| = k$ by $|I| = k - \ell$.
In particular, because the empty sum is~$0$, this gives a boundary condition $V_\mcsk(s_1, \dots, s_n) = 0$ when there are $k$ Markov chains in terminal states.

To show that \cref{eq:v_mcs-k} gives a solution to the MCS-$k$ Bellman equation, we use an analogue of \cref{eq:v_mcs_local_perspective}, expressing the MCS-$k$ value function in terms of the local MDP's value function.
To reduce clutter, we again shorten $G_i(s_i)$ to~$G_i$ and $\surval_i(s_i)$ to~$\surval_i$, and we assume without loss of generality that none of the Markov chains are in terminal states.
Letting
\[
  \surval_{\text{with\,} i} &= \max_{\substack{I \subseteq \{1, \dots, n\} \setminus \{i\}\\|J| = k - 1}} \sum_{j \in J} \surval_j
  &
  \surval_{\text{without\,} i} &= \max_{\substack{I \subseteq \{1, \dots, n\} \setminus \{i\}\\|J| = k}} \sum_{j \in J} \surval_j
\]
we can rewrite \cref{eq:v_mcs-k} as
\[
  V_\mcsk^*(s_1, \dots, s_n)
  &
  = \E{\max(\surval_i + \surval_{\text{with\,} i}, \surval_{\text{without\,} i})}
  \\ &
  = \E{\max(\surval_i, \surval_{\text{without\,} i} - \surval_{\text{with\,} i})} + \E{\surval_{\text{with\,} i}}
  \\ &
  \label{eq:v_mcs-k_local_perspective}
  = \E{V_{\loc, i}^*(s_i; \surval_{\text{without\,} i} - \surval_{\text{with\,} i})} + \E{\surval_{\text{with\,} i}}
  .
\]
Just as \cref{eq:v_mcs_local_perspective} can be thought of as $i$'s perspective on MCS, \cref{eq:v_mcs-k_local_perspective} can be thought of as $i$'s perspective on MCS-$k$.
Because neither $\surval_{\text{with\,} i}$ nor $\surval_{\text{without\,} i}$ depend on~$s_i$, applying the Bellman operator and then reasoning similarly to the end of proof of \cref{thm:v_mcs} shows the that Bellman equation holds, with playing~$i$ being optimal if and only if $G_i$ is among the $k$ greatest Gittins indices.

\subsection{Comparison to other optimality proofs}
\label{sec:proof:discussion}

There are many proofs of the optimality of the Gittins policy in the literature, taking a variety of approaches and covering a variety of settings.
We discuss just a few approaches here, referring the reader to \citet[Section~2.12]{gittins_multi-armed_2011} for a more comprehensive history.

To the best of our knowledge, there is no single theorem that unifies all of the known optimality results, particularly when infinite spaces, continuous time, or inflation (as in \cref{sec:beyond:tail}) are involved.
We consider providing such a unifying account to be an open~problem.

For settings where Markov chains have finitely many states, a common approach is to use \emph{induction on the number of states}.
See \citet{tsitsiklis_short_1994} for a particularly accessible proof of this form.
Roughly speaking, these proofs show that the state of maximal index should be prioritized over all others, then reduce the Markov chain involved by removing that state.
One advantage of this inductive approach is that it easily generalizes to branching bandits \citep{weiss_branching_1988}.
Another advantage is that the proofs are elegant and elementary.
On the other hand, a key difficulty is that they are hard to generalize to infinite state spaces.

The proof we give above is based primarily on that of \citet{whittle_multi-armed_1980}, which is the first \emph{dynamic programming} proof of the Gittins policy's optimality.
In particular, \citet{whittle_multi-armed_1980} discovered the form of the MCS value function as a combination of all the Markov chains' local MDP value functions, though without the probabilistic interpretation of \cref{thm:v_mcs}.
Unlike the inductive approach, the dynamic programming approach works essentially whenever dynamic programming itself works---which our argument took as assumption.

An advantage of the dynamic programming approach is that it can be easily extended to yield results about MDP selection (\cref{sec:beyond:optional_inspection:progress}), including optimality when the MDPs satisfy the Whittle condition \citep{whittle_multi-armed_1980, glazebrook_sufficient_1982} and approximation results when they do not \citep{brown_optimal_2013, aouad_pandoras_2020, chawla_combinatorial_2024, scully_local_2024}.
Even more generally, in the setting where independent Markov chains are replaced by a generalization called \emph{interleaved filtrations}, essentially the same construction still works \citep{mandelbaum_discrete_1986}, and extends to continuous time \citep{mandelbaum_continuous_1987, kaspi_multi-armed_1998, karoui_dynamic_1994, bank_gittins_2007}.

Our proof is also influenced by proofs based on an \emph{economic argument}, as pioneered by \citet{weber_gittins_1992} for the discounted setting and later replicated in undiscounted settings \citep{dumitriu_playing_2003, kleinberg_descending_2016, gupta_markovian_2019, bowers_matching_2024, chawla_combinatorial_2024}.
In the undiscounted setting, the argument consists of three main steps:
\*[1.] For each Markov chain~$i$, define a random variable called its \emph{surrogate value}, denoted~$\surval_i$.
This is exactly our \cref{def:surval}.
\* Show that, in MCS, the expected value achieved by any policy~$\pi$ is at most $\E{\surval_{i_\pi}}$, where $i_\pi$ is the identifier of the Markov chain that $\pi$ finishes, noting that there is always exactly one such Markov chain under \cref{asm:discount:non_discounted}.
\* Show that the above inequality is in fact an equality when $\pi$ is the Gittins policy.
\* Finally, observe that the Gittins policy \emph{always} finishes the Markov chain of maximal surrogate value, and thus always obtains surrogate value $\max_{i \in \{1, \dots, n\}} \surval_i$.
\*/
The economic argument thus gives another interpretation of the value function we derive in \cref{thm:v_mcs}: the expected value achieved by Gittins is $\E{\max_{i \in \{1, \dots, n\}} \surval_i}$, and every other policy~$\pi$ achieves at most $\E{\surval_{i_\pi}}$.
One can therefore loosely think of $\surval_i$ as a kind of amortized value for each chain, hence the name \emph{surrogate value} for it.

Finally, two more proof techniques, which are related to each other, are those based on the \emph{achievable region} approach \citep{bertsimas_achievable_1995, bertsimas_conservation_1996, bertsimas_optimization_1999, bertsimas_optimization_1999a, dacre_achievable_1999} and the \emph{WINE (work integral number equality)} queueing identity \citep{scully_new_2022, scully_gittins_2021, scully_gittins_2020}.
The main appeal of these approaches is that they can also be used to prove guarantees on the approximate optimality of approximate index policies, and to prove performance bounds for the multiserver case where multiple Markov chains are advanced at every time step \citep{bertsimas_optimization_1999a, glazebrook_parallel_2001, glazebrook_analysis_2003}.
We refer the reader to \citet{dacre_achievable_1999} for a primer on the achievable region approach and to \citet{scully_new_2022} for a primer on WINE.
There is not yet a full account of the relationship between these approaches, but see \citet[Section~2.2.3]{scully_new_2022} for some initial discussion.
A full unification would likely require a version of the achievable region method that works with measure-valued linear programs.

\end{appendices}

\end{document}